\setlist[enumerate]{itemsep=0mm}
\newtheorem{thm}{Theorem}[section] %
\newtheorem{theorem}[thm]{Theorem}
\newtheorem{lemma}[thm]{Lemma}
\newtheorem{prob}[thm]{Problem}
\newtheorem{corollary}[thm]{Corollary}
\newtheorem{claim}{Claim}
\newtheorem{proposition}[thm]{Proposition}
\tikzstyle{none}=[inner sep=0mm]
\definecolor{lightpurple}{rgb}{0.8, 0.7, 1.0} 
\definecolor{skyblue}{rgb}{0.529, 0.807, 0.922}
\newcommand\red[1] {{\color{red} #1}}
\newcounter{countcase}
\def\incase{\addtocounter{countcase}{1}{\noindent {\bf Case \thecountcase}: }}
\newcounter{countclaim}
\def\inclaim{\addtocounter{countclaim}{1}
	{\vspace{0.2 cm}\noindent {\bf Claim \thecountclaim}: }}
\newcommand{\proofend}{{\hfill$\Box$}}
\def \border {\mbox{bd}}
\def \nest {\upomega}
\def \od {\mbox{Ord}}
\def \Nest{\mathbb{W}}
\def \setn{{\cal N}}
\def \setr{{\cal R}}
\newcommand\equ[2] 
\newcommand\Ceil[1] 
\newcommand\eqn[2] 
\newcommand \brk[1]
\let\oldbibliography\thebibliography
\renewcommand{\thebibliography}[1]{%
  \oldbibliography{#1}%
  \setlength{\itemsep}{-2pt}%
  \setlength{\baselineskip}{11pt}
  \setlength{\lineskiplimit}{-\maxdimen}
}
\begin{document}

\title{\bf
Determining the minimum size 
of 
maximal 1-plane graphs\thanks{The work was supported by the National
Natural Science Foundation of China (Grant No. 12271157, 12371346, 12371340) and the Postdoctoral Science Foundation of China  (Grant  No. 2024M760867).}
}

\author[1]{\small Yuanqiu Huang\thanks{E-mail: hyqq@hunnu.edu.cn}}

\author[2]{\small  Zhangdong Ouyang\thanks{Corresponding author.
	Email:oymath@163.com }}

\author[3]{\small 
	Licheng Zhang\thanks{E-mail: lczhangmath@163.com}}

\author[4]{\small Fengming Dong\thanks{Email: fengming.dong@nie.edu.sg  and  donggraph@163.com.}}

\affil[1] {\footnotesize Department of Mathematics, Hunan Normal University, Changsha 410081, P.R.China}

\affil[2] {\footnotesize Department of Mathematics, Hunan First Normal University , Changsha 410205, P.R.China}

\affil[3]{\footnotesize School of Mathematics, Hunan University, Changsha 410082, P.R.China}
	
\affil[4]{\footnotesize National Institute Education, Nanyang Technological University, Singapore}

\date{}
\maketitle

\begin{abstract}
A {\it $1$-plane graph} is a graph together with a drawing 
in the plane in such a way that each edge is crossed at most once. 
A $1$-plane graph is maximal if no edge can be added without violating either 1-planarity or simplicity.
Let $m(n)$ denote
the minimum size of 
a  maximal $1$-plane graph of order $n$.
 Brandenburg et al. established  that  
 $m(n)\ge 2.1n-\frac{10}{3}$
 for all $n\ge 4$, 
 which was improved by 
 Bar\'{a}t and T\'{o}th to 
 $m(n)\ge \frac{20}{9}n-\frac{10}{3}$. 
 In this paper, we confirm that 
 $m(n)=\Ceil{\frac{7}{3}n}-3$
 for all $n\ge 5$.
\end{abstract}

{\bf Keywords}:   1-planar graph,   1-plane graph,  1-planar drawing,
nest

\makeatletter
\renewcommand\@makefnmark%
{\mbox{\textsuperscript{\normalfont\@thefnmark)}}}
\makeatother


\section{Introduction}\label{sec1}

\subsection{Lower bounds of the sizes of  maximal  1-plane (or 1-planar) graphs}

All graphs considered here are simple and finite. 
Let $G=(V,E)$ be a graph. 
Its order and size are $|V|$ and $|E|$,
respectively. 
A {\it drawing} of $G$ 
is a mapping $D$ that assigns to each vertex in $V$ a
distinct point in the plane and to each edge $uv$ in $E$ a
continuous arc connecting $D(u)$ and $D(v)$. We often make no
distinction between a graph-theoretical object (such as a vertex or
an edge) and its drawing. All drawings considered here ensure that no edge crosses itself, no
two edges cross more than once, and no two edges incident with the
same vertex cross each other. A graph is {\it planar} if it can be drawn in the plane without edge crossings.  
A drawing of a graph is $k$-{\it planar} if each of its edges is crossed at most $k$ times. A graph is $k$-{\it planar} if it has a $k$-planar drawing, and it is {\it  maximal $k$-planar} if we cannot add any edge to it so that the resulting graph is still $k$-planar. A graph together with a $k$-planar drawing is a $k$-plane graph, and it is {\it maximal $k$-plane} if we cannot add any edge to it so that the resulting drawing is still $k$-plane. Specifically for $k = 1$,
the notion of 1-planarity was introduced in 1965 by
Ringel \cite{GR}.
Since then many properties of 1-planar graphs have
been studied (e.g. see the survey paper \cite{SK} or the book \cite{SH}).

Determining the density for a finite family of graphs is one of the most fundamental problems in graph theory.  For graphs with fixed  drawings, particularly for $k$-planar graphs, the  extremal  problem for edges  has attracted much interest, see \cite{AAB,CBA, MAB,CP,EH,MHO, YZD, Ka}.
It is known that every maximal planar graph with $n$  $(\ge 3)$  vertices has the same number of
edges, $3n-6$. However, there is no analogous conclusion for 1-planar graphs.
It is well-known that 
any maximal 1-planar graph with $n$ $(\ge 3)$ vertices has at most $4n-8$ edges which is
tight for $n=8$ and $n\ge 10$ (see \cite{HR,IT, JP}). 
But Brandenburg et al. \cite{Br} found
 arbitrarily large sparse maximal 1-planar graphs of size much less than $4n-8$.
 
 \begin{theorem}[\cite{Br}]\label{thm-1}
 \begin{enumerate}
 	\item[(a)] For any integer $n$ 
 	with $n\ge 27$ and $n\equiv 3\pmod{6}$, there exists a 
maximal $1$-plane graph
of order $n$ and size 
 $\frac 73 n-3$.
 \item[(b)] 
 There are arbitrarily large 
 maximal $1$-planar graphs 
 of order $n$ and size 
 $\frac {45}{17}n-\frac{84}{17}$.
 	 \end{enumerate}
 \end{theorem}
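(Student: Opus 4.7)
The plan is to prove both parts constructively by exhibiting explicit families of $1$-plane (respectively $1$-planar) graphs with the stated parameters and then verifying their maximality.

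For part (a), the strategy is to assemble the graph $G_n$ from a single small gadget $H$ that already carries the correct local density $\frac{7}{3}$. A natural candidate is a $6$-vertex $1$-plane gadget with $14$ edges whose outer boundary is a triangle $v_1v_2v_3$ and whose interior contains one crossed pair of edges, with the three additional vertices placed inside and joined by just enough edges so that every face of the planarization is a triangle or a ``kite'' (a crossed quadrilateral). For $n = 6k + 3 \ge 27$, the graph $G_n$ is then built by arranging $k$ disjoint copies of $H$ around a common central triangle, identifying port vertices in a suitable cyclic pattern. A direct count yields $n = 6k + 3$ vertices and $14k + 4 = \frac{7}{3}n - 3$ edges, and the union of the individual drawings is visibly $1$-plane.

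The main obstacle is the verification of maximality. Two kinds of potential new edges must be excluded: (i) edges with both endpoints in a single gadget, and (ii) edges running between distinct gadgets. Case (i) is handled by the gadget design: if every face of the planarization of $H$ is a triangle or a kite, then any chord inside $H$ forces some existing edge to be crossed twice. Case (ii) reduces to a local calculation at the central triangle: a new edge joining two different gadgets must cross out of one gadget's region and into another's, and by the face structure this can only happen by crossing two of the central triangle's edges, again forcing a double crossing on some existing edge. A short case analysis keyed to the position of the hypothetical new edge relative to the central triangle should complete the argument.

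For part (b) the density drops to $\frac{45}{17}$, but the stronger conclusion is required -- maximal $1$-\emph{planar}, meaning no edge can be added under \emph{any} $1$-planar drawing of the underlying graph, not merely the given one. The construction should again be modular, based on a $17$-vertex gadget with $45$ edges attached along a prescribed border. The real difficulty is to rule out every alternative $1$-planar drawing of $G \cup e$ for each non-edge $e$. I would approach this by identifying rigid invariants of the drawing, for example the subgraph of uncrossed edges or the cyclic order around each vertex, showing that these invariants persist under any $1$-planar redrawing, and thereby reducing the stronger maximality to the $1$-plane maximality handled in part (a). This rigidity step is the technical heart of part (b) and the place where I expect most of the difficulty to lie.
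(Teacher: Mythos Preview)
This theorem is not proved in the present paper; it is quoted from Brandenburg et al.\ as background, and no argument for it appears here. The paper's own upper-bound contribution is Proposition~\ref{Hn} in Section~\ref{sec6}, which constructs a different family $\{H_n\}_{n\ge 5}$ of maximal $1$-plane graphs of size $\lceil \tfrac{7}{3}n\rceil - 3$ for \emph{every} $n\ge 5$, not only $n\equiv 3\pmod 6$. That construction is inductive rather than modular: one starts from explicit base graphs $H_5,H_6,H_7$, each containing a specific local pattern $D$ (a vertex of degree $3$ sitting inside a false triangle bounded by a crossing), and passes from $H_{n-3}$ to $H_n$ by replacing $D$ with a larger pattern $\phi(D)$ that inserts three new vertices and seven new edges while recreating a copy of $D$ for the next step. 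Part~(b) is not addressed in the paper at all.

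Your sketch for part~(a) has an internal inconsistency that would block the argument. A gadget whose planarization has only triangular and kite faces is an \emph{optimal} $1$-plane fragment with edge density close to $4$, not $7/3$; and if you glue $k$ copies of a $6$-vertex, $14$-edge gadget along a common outer triangle, each copy contributes $3$ new vertices and $11$ new edges, so the limiting density is $11/3$, and your asserted counts $n=6k+3$, $e=14k+4$ do not follow from the assembly you describe. The known sparse constructions---both the one in \cite{Br} and the $H_n$ of this paper---depend essentially on \emph{hermits}: degree-$2$ vertices trapped inside nests formed by two crossings, which lower the density while leaving no room for a new edge. Your gadget has no such vertices, so it cannot be both maximal and sparse. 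For part~(b) your outline correctly locates the difficulty in the drawing-rigidity step, but nothing in the sketch indicates how that step would actually be carried out.
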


Introduced in \cite{EH},  for any 
integer $n\ge 1$,
$m(n)$ denotes 
the minimum value of $|E(G)|$
over all maximal $1$-plane graphs
$G$ of order $n$.
For any integers $a$ and $b$, let $\brk{a,b}$ denote the set 
of integers $i$ with $a\le i\le b$,
and $\brk{1,b}$ is simply written as 
$\brk{b}$.
Clearly, $m(n)={n\choose 2}$ for 
each $n\in \brk{4}$.
Brandenburg et al. \cite{Br} 
obtained the following 
lower bounds for $m(n)$.

\begin{theorem}[\cite{Br}]\label{thm0}
For any $n\ge 4$, 	
$m(n)\ge 2.1n-\frac{10}{3}$ .
\end{theorem}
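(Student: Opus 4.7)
The plan is to apply Euler's formula to the planarization $G^\times$ of a maximal 1-plane drawing of $G$, extract structural consequences of maximality via a \emph{kite lemma}, and then run a discharging argument.

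Let $G$ be a maximal 1-plane graph with $n$ vertices, $m$ edges and $c$ crossings in its drawing. Construct the planarization $G^\times$ by inserting a degree-$4$ vertex at each crossing; this is a plane graph with $n+c$ vertices and $m+2c$ edges, so Euler's formula gives $f = m + c - n + 2$ faces, and the edge-face handshake yields $\sum_F \ell(F) = 2m + 4c$, where $\ell(F)$ is the length of face $F$. These two identities form the backbone of the counting.

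The central structural input is a \emph{kite lemma} driven by maximality. For each crossing of edges $ac$ and $bd$ at a point $x$, I would argue that all four edges $ab$, $bc$, $cd$, $da$ lie in $E(G)$ and may be assumed drawn close to $x$ so that the four faces of $G^\times$ incident to $x$ are triangles, each bounded by one kite edge and two half-edges of the crossing pair; otherwise one of the four wedges around $x$ would admit an additional uncrossed arc, contradicting maximality of the drawing. Consequently every crossing corner of $G^\times$ lies on a triangular face with exactly one crossing corner, and every face of $G^\times$ of length at least $4$ has only original vertex corners on its boundary.

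I would then set up a discharging scheme. Assign each face $F$ the initial charge $\ell(F)-4$; Euler gives $\sum_F(\ell(F)-4) = 4n - 2m - 8$. The $4c$ kite triangles and the $t_0$ ``pure'' triangles (three vertex corners, no crossings) contribute total charge $-4c - t_0$, while faces of length $\ge 5$ contribute positive charge. To convert this into a lower bound on $m$, I would exploit maximality inside each vertex-only face $F$ of length $\ge 4$: no chord can be drawn inside $F$ without violating simplicity (such a chord would be uncrossed and locally admissible), so for every non-boundary pair of vertex corners on $F$ the corresponding edge of $G$ must already exist, drawn in another face. These ``blocking chords'' inject extra edges into $m$ and let the positive charge of long faces be redistributed onto chord edges and kite triangles. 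Balancing the resulting inequalities is meant to yield $2m \ge \tfrac{21}{5}n - \tfrac{20}{3}$, equivalently $m \ge \tfrac{21}{10}n - \tfrac{10}{3}$.

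The main obstacle is the quantitative tuning. The elementary Euler computation combined only with the kite lemma gives the standard upper bound $m \le 3n-6+c$ (via the planar skeleton) and a lower bound $t_0 \ge 2m-4n+8$ on pure triangles, but neither is a direct lower bound on $m$. Extracting the coefficient $\tfrac{21}{10}$ requires carefully weighing the extra edges produced by blocking chords of large vertex-only faces while avoiding double counting and handling degenerate configurations (kite edges shared between distinct crossings, non-simple boundary walks of faces, and pure triangles that share several edges with adjacent kite configurations). Choosing discharging weights that convert this bookkeeping into exactly the fraction $21/10$ is the delicate part of the argument.
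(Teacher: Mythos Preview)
The paper does not prove this statement; Theorem~\ref{thm0} is quoted from Brandenburg et al.\ \cite{Br} as historical context, and the paper's own contribution is the sharper Theorem~\ref{main1.3}. There is therefore no proof in the paper to compare your proposal against.

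Independently of that, your outline has two genuine gaps. First, the structural step is not correct as stated. Lemma~\ref{K_4} does guarantee that the four endpoints of a crossing pair induce a $K_4$, but it does \emph{not} guarantee that the kite edges $ab,bc,cd,da$ can be taken to bound triangular faces at the crossing in $G^\times$. If, say, $ab$ already lies in $E(G)$ but is drawn elsewhere in the plane, the wedge at the crossing between the half-edges toward $a$ and $b$ may be a face of length $\ge 4$ whose boundary contains the crossing vertex; maximality does not let you add a second copy of $ab$ there, since that would violate simplicity. Hence your assertion that ``every face of $G^\times$ of length at least $4$ has only original vertex corners on its boundary'' fails in general, and everything downstream that relies on it collapses.

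Second, you explicitly acknowledge that you have not found discharging rules producing the coefficient $21/10$. That is the substance of the argument; without it you have a scaffold, not a proof. The proof in \cite{Br} does proceed via a charging analysis on the planarization, but the actual rules, together with a careful treatment of faces around crossings that are \emph{not} triangles, are precisely where the work lies, and your sketch does not supply them.
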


Bar\'{a}t and T\'{o}th \cite{BT} 
noticed that the bound in Theorem \ref{thm0} is 
far from optimal 
and obtained the following conclusion. 

\begin{theorem}[\cite{BT}]\label{thm1}
 For any $n\ge 4$, 
 $m(n)\ge \frac{20}{9} n-\frac{10}{3}$.
 \end{theorem}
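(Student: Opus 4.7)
The plan is to analyze the planarization of a maximal $1$-plane graph and use the maximality to constrain its face structure, then extract the lower bound by a discharging and double-counting argument.

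First I would set up the standard planarization. Let $G$ be any maximal $1$-plane graph of order $n$, with $m$ edges and $c$ crossings in its drawing. Let $G^{\times}$ be the plane graph obtained by replacing each crossing with a new degree-$4$ vertex; it has $n+c$ vertices, $m+2c$ edges, and by Euler $f=m+c-n+2$ faces, satisfying $\sum_{F}\ell(F)=2(m+2c)$. Two maximality observations will drive the argument. The first is the classical \emph{kite lemma} of Brandenburg et al.: for each crossing of $ac$ with $bd$, the four edges $ab$, $bc$, $cd$, $da$ are present in $G$. The second is the \emph{face-clique lemma}: for every face $F$ of $G^{\times}$, the distinct original vertices on $\partial F$ induce a clique of $G$; indeed, any non-edge between two such vertices could be realized by an arc drawn inside $F$, giving a new uncrossed edge and contradicting maximality. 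Since $K_{7}$ is not $1$-planar, at most six distinct original vertices can lie on any single face.

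Next I would classify each face $F$ of $G^{\times}$ by the triple $(\ell(F), o(F), x(F))$, where $o(F)$ and $x(F)$ count the distinct original and crossing vertices on $\partial F$. Since $G^{\times}$ has no edge between two crossing vertices, consecutive vertices on $\partial F$ are never both crossings; combined with the face-clique lemma and the six-vertex cap, this rigidly constrains triangular faces to exactly two types, $(3,3,0)$ (a triangle of $G$) and $(3,2,1)$ (one angular corner of a crossing closed by the corresponding kite edge), while faces of length $\ell(F)\geq 4$ must carry many uncrossed boundary edges to accommodate the required clique on their original vertices. Double counting face-edge incidences under these constraints, together with the planar-skeleton bound $m-2c\leq 3n-6$ and the trivial $2c\leq m$, will yield a linear inequality among $n$, $m$, and $c$. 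To squeeze out the coefficient $20/9$, I would run a short discharging step, giving each face $F$ an initial charge $\ell(F)-\alpha$ for a constant $\alpha$ to be tuned; long faces, being rich in uncrossed boundary edges thanks to the face-clique lemma, feed charge to the bottleneck triangular faces of type $(3,2,1)$ that surround crossings, after which the nonnegative final-charge sum combined with Euler delivers $m\geq \frac{20}{9}n-\frac{10}{3}$.

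The main obstacle will be controlling faces whose boundary walks are non-simple. When the planar skeleton has cut vertices or bridges, a face of $G^{\times}$ can revisit the same original vertex, so $o(F)$ may be much smaller than $\ell(F)/2$ and the face-clique content of the face drops. The kite lemma still forces local structure around each crossing, but the discharging balance must be rescued by a case analysis isolating bridges and cut vertices; specifically, one must show that each bridge and each cut vertex contributes only a bounded excess to the counting, so that the coefficient of $n$ in the final inequality is preserved. This is, in my judgement, the only genuinely delicate step in the argument, and is what separates the $\frac{20}{9}n$ coefficient from the weaker $2.1n$ bound of Theorem \ref{thm0}.
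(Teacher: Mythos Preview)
The paper does not prove Theorem~\ref{thm1}; it is quoted from Bar\'at and T\'oth~\cite{BT} as background, and the paper's own work is the stronger Theorem~\ref{main1.3}, which determines $m(n)$ exactly. That proof proceeds by an entirely different mechanism---a $K_4$-extension sequence of induced subgraphs, a running inequality involving the number of \emph{nests}, and a final reduction through exceptional edges (Lemma~\ref{skelet-3})---and never touches planarization, face classification, or discharging. So there is no proof of the $20/9$ bound in this paper to compare your proposal against.

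Your sketch is broadly in the spirit of how such density bounds are obtained, and the kite lemma and face-clique lemma you invoke are indeed the standard ingredients (they appear here as Lemmas~\ref{K_4} and~\ref{adj}). But as written it is a plan, not a proof: you never specify the discharging rules or the constant~$\alpha$, and the crucial claim that cut vertices and bridges contribute only ``bounded excess'' is asserted rather than argued. Since the coefficient $20/9$ must emerge precisely from those unspecified choices, the proposal does not yet establish the bound. One sharpening worth noting: Lemma~\ref{2-con1}(a) gives that each face carries at most four original vertices, not six, and Lemma~\ref{2-con1}(b) shows the clean-edge subgraph is spanning and $2$-connected; these remove much of the non-simple-boundary obstacle you flag, and in the actual Bar\'at--T\'oth argument they (together with the hermit/skeleton reduction of Lemmas~\ref{ske-max}--\ref{hermit2}) are what makes the counting go through.
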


In this paper, we introduce the new concept of ``nests"  and apply a more refined $K_4$-extension operation to
determine $m(n)$, as stated below.

\begin{theorem}\label{main1.3}
$m(n)=\Ceil{\frac{7}{3}n}-3$
for all integers $n$ with $n\ge 5$.
 \end{theorem}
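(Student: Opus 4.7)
The plan is to establish both a matching upper bound (through explicit constructions combined with a $K_4$-extension operation) and a matching lower bound (through a structural analysis based on nests together with a discharging argument on the planarization).

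For the upper bound $m(n)\le \lceil \tfrac{7}{3}n\rceil - 3$, Theorem~\ref{thm-1}(a) already supplies maximal $1$-plane graphs of size $\tfrac{7}{3}n-3$ for $n\equiv 3\pmod 6$ with $n\ge 27$. To cover every $n\ge 5$, I would first verify the bound by hand for small orders (say $5\le n\le 28$) by drawing explicit maximal $1$-plane graphs of the prescribed size, and then design a ``$K_4$-extension'' operation that takes a maximal $1$-plane graph $G$ and produces a new maximal $1$-plane graph $G^+$ with $|V(G^+)|=|V(G)|+k$ and $|E(G^+)|=|E(G)|+\lceil \tfrac{7k}{3}\rceil$ for $k\in\{1,2,3\}$; a single such step (performed on a planar $K_4$-face, say) should add $k$ vertices inside the face while forcing exactly $\lceil 7k/3\rceil$ new edges to maintain simplicity and prevent further insertions. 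Iterating the three possible step sizes from a base case in each residue class of $n$ modulo $3$ then yields the upper bound for every $n\ge 5$.

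For the lower bound $m(n)\ge \lceil \tfrac{7}{3}n\rceil - 3$, fix a maximal $1$-plane graph $G$ together with its drawing and planarize by replacing each of the $c$ crossings with a degree-$4$ dummy vertex, obtaining a plane graph $G^\times$ on $n+c$ vertices and $|E(G)|+2c$ edges. Maximality of $G$ forces each face of $G^\times$ to satisfy strong local conditions (one cannot add any planar or crossable edge), and the Bar\'at--T\'oth estimate $\tfrac{20}{9}$ already exploits the most basic of these. To sharpen to $\tfrac{7}{3}$, I would introduce the \emph{nest} $\nest$ as a maximal nested configuration of crossed $4$-faces (or of crossing-incident edges) whose internal structure is forced by maximality; collect such nests into a family $\Nest$ and show that each nest carries a quantifiable edge surplus. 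A discharging argument on $G^\times$, with initial charges from Euler's formula and rules that redistribute charge from nests to the deficient ``cheap'' quadrilateral faces they govern, should then raise the final charge per vertex to $\tfrac{7}{3}$, proving the bound.

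The main obstacle will be the discharging step. The gap between $\tfrac{20}{9}$ and $\tfrac{7}{3}$ is only $\tfrac{1}{9}$ per vertex, and closing it requires a complete classification of the local face-types around crossings, a proof that every deficient face is assigned to a unique nest whose surplus covers the deficit, and a verification that nests cannot overlap destructively or leave charge uncredited. A secondary but real difficulty is that the target $\lceil \tfrac{7}{3}n\rceil-3$ is integer-valued: both the extension operation (which must deliver exactly $\lceil 7k/3\rceil$ edges, not a fractional average) and the discharging (which must not lose the $+\tfrac{1}{3}$ or $+\tfrac{2}{3}$ slack at the end) have to be tracked according to $n\pmod 3$, so the construction and the lower bound must be tuned in each residue class simultaneously.
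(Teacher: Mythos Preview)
Your proposal is a plan rather than a proof, and the lower-bound half contains a genuine gap: you outline a discharging scheme on the planarization $G^\times$ but never execute it, and you yourself flag this as the main obstacle. The paper does \emph{not} proceed by discharging, and the route it takes is structurally different from anything in your sketch.

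First, the paper's notion of a \emph{nest} is precise and unlike your ``maximal nested configuration of crossed $4$-faces'': a nest is a region bounded by exactly four half-edges meeting at two crossings and two real vertices $u,v$, with nothing inside except possibly the edge $uv$. The reason nests matter is that every hermit (degree-$2$ vertex) of $G$ lies inside a distinct nest of the skeleton $\hat G$, so the number $h$ of hermits satisfies $h\le\nest(\hat G)$.

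Second, the heart of the lower bound is an \emph{inductive build-up} of $\hat G$ (once reduced to the case where every edge lies in some $K_4$) by a sequence $G_0\cong K_4\subset G_1\subset\cdots\subset G_N=\hat G$ of induced subgraphs, each $G_{i+1}$ obtained by adjoining a $K_4$ sharing $3$, $2$, or $1$ vertices with $G_i$ (strong, weak, micro links), chosen according to a priority rule (the ``SWM*-rule''). For each step one proves
\[
e(G_{i+1})-e(G_i)\ \ge\ \tfrac{7}{3}\bigl(n(G_{i+1})-n(G_i)\bigr)+\tfrac{1}{3}\,\bigl|\Nest(G_{i+1})\setminus\Nest(G_i)\bigr|,
\]
and summing yields the refined inequality $e(\hat G)\ge\tfrac{7}{3}n(\hat G)+\tfrac{1}{3}\nest(\hat G)-3$. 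Since $e(G)=e(\hat G)+2h$, $n(G)=n(\hat G)+h$, and $h\le\nest(\hat G)$, the $\tfrac{1}{3}\nest$ term exactly absorbs the hermit deficit and gives $e(G)\ge\tfrac{7}{3}n(G)-3$. The difficult step is the strong-link case, where one must rule out the new vertex $x$ generating too many nests relative to its degree; this requires tracking which edges of earlier $G_i$ are non-removable and forbidding certain cut configurations (Proposition~\ref{cut-bad}), an analysis that depends essentially on the SWM*-rule and has no counterpart in a face-based discharging scheme. Exceptional edges (those not in any $K_4$) are then handled separately via the Bar\'at--T\'oth structural lemma, splitting $\hat G$ into smaller pieces and inducting on order.

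For the upper bound your idea is right in spirit but overcomplicated: the paper needs only three base cases ($n=5,6,7$) and a \emph{single} operation adding exactly $3$ vertices and $7$ edges to a specific local structure that the operation itself regenerates, so one recursion $H_n=\phi(H_{n-3})$ covers all $n\ge 8$. There is no need for separate $k=1,2$ steps or for checking $n$ up to $28$.
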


\subsection{Terminology and definitions}

In the following, we explain some terminology and definitions. For any additional terminology or definitions not covered here, we refer to \cite{JAB}.

For any graph $G$,  
let $V(G)$ and $E(G)$ be its vertex set and edge set respectively. 
For convenience, 
let $n(G)=|V(G)|$ and $e(G)=|E(G)|$. 
For any vertex $v\in V(G)$, 
let 
$N_G(v)=\{u\in V (G): uv\in E(G)\}$ and 
$N_{G}[v]=N_{G}(v)\cup\{v\}$.
A vertex $v$ in $G$ is called a {\it dominating vertex} if $N_G[v]=V(G)$. 
 For every edge $e\in E(G)$, $G-e$ denotes the graph obtained from $G$ by deleting $e$. 
For a non-empty subset 
$A\subseteq V(G)$,  
let $G[A]$ denote the subgraph of $G$ 
induced by $A$. 

For any two subsets $V_1$ and $V_2$ 
of $V(G)$, let $E_G(V_1, V_2)$ 
 (or simply $E(V_1, V_2)$) 
denote the set of edges in $G$ 
each of which has one end in $V_1$ 
and the other end in $V_2$. 
For any $S\subseteq V(G)$, 
let $\partial_G(S)$ (or simply $\partial(S)$) denote the set 
$E(S, V(G)\setminus S)$. 
For any $v\in V(G)$, 
$\partial_G(\{v\})$ is also written as 
$\partial_G(v)$.
For any non-empty subset $A$ of $E(G)$, 
let $G[A]$ denote the subgraph 
of $G$ induced by $A$, i.e., 
the subgraph with  vertex set 
$\{v\in V(G): \partial(v)\cap A\ne \emptyset\}$ and edge set $A$.


Let $G$ be a 1-plane graph.  An edge $e$ of $G$ is called  {\it clean}  if it
does not cross other edges in $G$; otherwise, it is {\it crossing}. The drawing $G$ partitions the plane $\mathbb{R}^{2}$ into many parts, and each part of $\mathbb{R}^{2}\setminus G$ is  called a  {\it face} of $G$.  
The boundary of a face ${\cal R}$
is denoted by $\border({\cal R})$,
and ${\cal R}$ is called
	a {\it true face} if 
	$\border({\cal R})$ consists of 
	clean edges only,
	and a {\it false face} otherwise.S 
Let $H$ be a subgraph of a 1-plane graph $G$. This means that $H$ is also a 1-plane graph  obtained from $G$ by ignoring all drawn vertices and edges that do not belong to $H$. Assume that edge $e=xy$ crosses edge $e'=st$ at point $\alpha$. 
Then $\alpha x$ and $\alpha y$ are 
called a {\it near half-edge}  and 
a {\it far half-edge}  incident with $x$, respectively,  and both are also called 
the {\it half-edges} of $e$.
Clearly,  $\alpha x$ 
(resp. $\alpha y $) is 
a  far half-edge (resp. near half-edge) incident with $y$.

For each face $R$ of a $1$-plane graph $G$,  the collection of all edge segments, vertices and crossings on  $\border({\cal R})$ can be organized into several  \emph{closed walks} in $G$ traversing along  several  simple closed curves lying just inside the face ${\cal R}$,  and any 
two edge segments of $G$ are said to be \emph{consecutive} on 
 $\border(\setr)$
 if they are consecutive on one 
 of these closed walks.

The remainder of this paper is organized as follows. 
Section~\ref{sec2} presents fundamental results on maximal 
$1$-plane graphs, including the relationship between hermits and nests -- a special type of regions. 
Section~\ref{sec3} introduces $K_4$-extensions (strong, weak, and micro) and describes the rules for generating a $K_4$-extension sequence of subgraphs. 
Section~\ref{sec4} explores key properties of $K_4$-extensions. In Section~\ref{sec5}, we establish an inequality involving the size, order, and nest number of each graph in a $K_4$-extension sequence 
$\{G_i\}_{i\in \brk{0,N}}$. Finally, in Section~\ref{sec6}, we apply the results from Sections~\ref{sec4} and~\ref{sec5} to prove Theorem~\ref{main1.3}.

\section {Preliminaries}\label{sec2}




Throughout this subsection, let $G$ be a maximal 1-plane graph 
with $n(G)\ge 4$.
The following
three lemmas are from \cite{BT,Br} 
and \cite{EH}.

\begin{lemma}[\cite{BT,Br}]\label{adj}  For any face ${\cal R}$ of $G$, 
there are at least two vertices on  $\border({\cal R})$ and 
any two vertices  on  $\border({\cal R})$ are adjacent in $G$.
\end{lemma}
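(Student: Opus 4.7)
The plan is to prove both conclusions by contradiction, each time invoking the maximality of $G$.

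For the adjacency statement, I would begin by assuming that $u,v\in\border(\setr)$ satisfy $uv\notin E(G)$. Since $\setr$ is an open connected region of $\mathbb{R}^{2}\setminus G$ whose closure contains both $u$ and $v$, a simple arc $\gamma$ from $u$ to $v$ whose interior lies entirely in $\setr$ can be chosen. Using $\gamma$ as the drawing of the new edge $uv$ introduces no crossings with existing edges, since the interior of $\setr$ contains no vertex or edge of $G$; the resulting drawing is simple, $1$-plane, and properly contains $G$, contradicting maximality.

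For the ``at least two vertices'' statement, I would first record the elementary observation that every edge segment appearing on $\border(\setr)$ has at least one vertex endpoint. Such a segment is either a full clean edge (both endpoints being vertices) or a half-edge of a crossed edge (one vertex endpoint and the unique crossing of that edge as the other endpoint), whereas a segment with two crossing endpoints would force some edge of $G$ to be crossed twice, contradicting $1$-planarity. In particular $\border(\setr)$ contains at least one vertex. Suppose for contradiction it contains exactly one, say $v$. Starting from $v$ and following a facial walk on $\border(\setr)$, the walk leaves $v$ along a half-edge of some edge $e_1$ incident to $v$, reaches the unique crossing $c$ of $e_1$, and then turns onto a half-edge of the second edge $e_2$ through $c$. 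By the preceding observation this half-edge must terminate at a vertex of $G$ on $\border(\setr)$, which by hypothesis can only be $v$ itself. Hence $e_1$ and $e_2$ are both incident to $v$ yet cross at $c$, violating the drawing convention that two edges sharing an endpoint do not cross.

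The only delicate point is that $\border(\setr)$ may consist of several closed walks (when $G$ has cut vertices or is disconnected), so one must be careful about what ``a vertex on $\border(\setr)$'' means. However, both arguments rely only on the connectedness of the face $\setr$ itself and on local structure near a single corner, so multiple boundary components cause no real trouble and I do not expect any serious obstacle.
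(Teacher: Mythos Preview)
The paper does not supply its own proof of this lemma; it is quoted from \cite{BT,Br} and used as a black box. So there is no argument in the paper to compare yours against.

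Your proof is correct. The adjacency half is exactly the standard maximality argument. For the ``at least two vertices'' half, your key observation---that in a $1$-plane drawing no boundary segment can run from crossing to crossing---is right and does the work. One small omission: when you say ``the walk leaves $v$ along a half-edge of some edge $e_1$'', you are tacitly excluding the possibility that it leaves along a full clean edge. That case is trivial (the other endpoint would be a second vertex on $\border(\setr)$, or else a self-loop at $v$, both impossible), but it is worth one sentence so the case split is visibly complete. Your closing remark about multiple boundary components is accurate: the argument is purely local at a single corner and is unaffected.
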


\begin{lemma}[\cite{BT,Br}]\label{K_4} If $ab$ and $cd$ are two edges in $G$ 
	which cross each other, then $G[\{a,b,c,d\}]\cong K_4$. 
\end{lemma}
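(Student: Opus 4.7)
The plan is to exploit the crossing point itself: locally it is a small "$\times$" that separates a tiny disk into four quadrants, and each quadrant sits inside some face of $G$, whose boundary must connect the two endpoints defining that quadrant. Maximality (via Lemma~\ref{adj}) then converts each of these incidences into an edge of $G$, giving all the missing edges of $K_4$ on $\{a,b,c,d\}$.

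More concretely, I would let $\alpha$ denote the crossing point of $ab$ and $cd$, and take a sufficiently small open disk $D$ around $\alpha$ so that $D$ meets no other part of the drawing of $G$ beyond the four half-edges at $\alpha$. These four half-edges cut $D$ into four open quadrants, which I would label $Q_{ac}, Q_{ad}, Q_{bc}, Q_{bd}$ according to which two half-edges bound them (for instance, $Q_{ac}$ is bounded by the far half-edge $\alpha a$ of $ab$ and the far half-edge $\alpha c$ of $cd$). Each $Q_{xy}$ is contained in a single face $\setr_{xy}$ of $G$, because $D$ contains no other edges or vertices.

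Next I would trace the boundary $\border(\setr_{xy})$ near $\alpha$. By construction the two half-edges $\alpha x$ and $\alpha y$ both lie on $\border(\setr_{xy})$; walking along the boundary from $\alpha$ into the half-edge towards $x$, one reaches the vertex $x$, continues along $\border(\setr_{xy})$, and eventually returns to $\alpha$ along the half-edge towards $y$, whose last vertex before $\alpha$ is $y$. Hence both $x$ and $y$ lie on $\border(\setr_{xy})$. Applying Lemma~\ref{adj} to $\setr_{xy}$ yields $xy\in E(G)$. Running this argument for the four quadrants produces the edges $ac$, $ad$, $bc$, $bd$, which together with the given edges $ab$ and $cd$ show $G[\{a,b,c,d\}]\cong K_4$.

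The only subtle step is the claim that the boundary walk of $\setr_{xy}$ genuinely visits both $x$ and $y$, i.e., that the two half-edges at $\alpha$ belong to the same boundary component of $\setr_{xy}$; this might be thought of as the main (though very mild) obstacle. It is handled by noting that $\setr_{xy}$ contains the connected neighborhood $Q_{xy}\subset D$, so the boundary traversal that starts along $\alpha x$ inside $Q_{xy}$ must, upon returning to $\alpha$, re-enter from the only other half-edge bordering $Q_{xy}$, namely $\alpha y$. Everything else is a direct application of Lemma~\ref{adj}.
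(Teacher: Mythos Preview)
Your argument is correct and is essentially the standard proof of this fact. Note that the paper itself does not supply a proof of this lemma; it merely cites it from \cite{BT,Br}. A couple of minor remarks: in the paper's terminology the segment $\alpha a$ is the \emph{near} half-edge incident with $a$, not the far one, so your labeling is slightly off but harmless. Also, the ``subtle step'' you flag about both half-edges lying on the same boundary walk is in fact unnecessary here, since Lemma~\ref{adj} asserts that \emph{any} two vertices on $\border(\setr)$ are adjacent, with no requirement that they lie on the same boundary component; once you have observed that the half-edges $\alpha x$ and $\alpha y$ are uncrossed (by $1$-planarity) and hence that $x,y\in\border(\setr_{xy})$, Lemma~\ref{adj} finishes immediately.
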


\begin{lemma}[\cite{EH}] \label{2-con1}
	\begin{enumerate}
		\item[(a)] Each face in $G$ has at most $4$ vertices
		\footnote{Called real vertices in \cite{EH}.}; and 
				\item[(b)] 
		The subgraph $G^*$  induced by all clean edges of $G$ is spanning and $2$-connected.
			\end{enumerate}
\end{lemma}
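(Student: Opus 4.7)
The plan is to prove (a) by a simple topological contradiction inside the face, and (b) by combining (a) with Lemmas~\ref{adj} and~\ref{K_4} through a careful local analysis at a putative bad vertex or cut vertex.

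For (a), suppose for contradiction that some face $\setr$ has five distinct vertices $v_1, v_2, v_3, v_4, v_5$ on $\border(\setr)$ listed in the cyclic order they appear. Lemma~\ref{adj} forces them to induce a $K_5$ in $G$, and since the interior of $\setr$ contains no edge of $G$, every edge of this $K_5$ is drawn in the closed complement of $\setr$ on the sphere, which is itself a topological disk $D$ carrying $v_1, \ldots, v_5$ on $\partial D$ in the reverse cyclic order. The elementary fact that two chords of a disk with interleaving boundary endpoints must intersect now applies twice: $v_2v_4$ interleaves with both $v_1v_3$ and $v_3v_5$ on $\partial D$, so $v_2v_4$ is crossed in $G$ by two distinct edges, contradicting 1-planarity.

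For the spanning half of (b), assume some vertex $v$ has every incident edge crossing, and list the rotation at $v$ as $vu_1, vu_2, \ldots, vu_d$. For each $i$, let $x_iy_i$ be the crossing partner of $vu_i$; Lemma~\ref{K_4} makes $\{v, u_i, x_i, y_i\}$ induce a $K_4$ whose drawing places $vx_i$ and $vy_i$ in the rotation at $v$ on opposite sides of $vu_i$. Tracing the face sector at $v$ between $vu_i$ and $vu_{i+1}$ along the forced half-edges and invoking Lemma~\ref{adj} identifies $\{x_i, y_i\} = \{u_{i-1}, u_{i+1}\}$, so the neighbors of $v$ form a cycle $u_1 u_2 \cdots u_d$ with chord $u_{i-1}u_{i+1}$ crossing $vu_i$ for every $i$. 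A short case analysis on $d$ then contradicts 1-planarity: $d = 3$ produces a $K_4$-drawing whose planarization is the non-planar $K_{4,3}$; $d = 4$ forces one chord to be crossed by two distinct $vu_j$'s; and for $d \ge 5$ the cycle-plus-chord subgraph on $\{u_1, \ldots, u_d\}$ would need to be drawn planarly (since its edges only participate in crossings with $v$-edges), yet it contains a non-planar subgraph such as $K_5$ when $d = 5$. Small degrees $d \le 2$ are excluded directly by the $K_4$-condition. Hence $v$ must have a clean edge, so $G^*$ is spanning.

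For 2-connectedness of $G^*$, observe that for each face $\setr$ of $G$ the $k \le 4$ vertices on $\border(\setr)$ form a clique by part~(a) and Lemma~\ref{adj}, and the full-edge (non-half-edge) segments of $\border(\setr)$ are clean edges linking those vertices in $G^*$; walking from face to face across shared clean boundary edges then pools all of $V$ into a single connected piece of $G^*$. To rule out a cut vertex $w$, suppose $G^* - w$ disconnects into components $A$ and $B$ and pick any edge $ab \in E(G)$ with $a \in A, b \in B$. Since $ab$ is not clean, its crossing partner $cd$ yields a $K_4$ on $\{a, b, c, d\}$ via Lemma~\ref{K_4} whose four non-crossing edges $ac, ad, bc, bd$ are clean in the local $K_4$-drawing. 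A case analysis on the locations of $c$ and $d$ in $A$, $B$, or $\{w\}$, combined with the face-walking connectivity, produces a path in $G^* - w$ from $A$ to $B$, the desired contradiction. I expect this 2-connectedness step to be the main obstacle: controlling how the local $K_4$-substructures around $w$ interact with the global clean-edge connectivity requires carefully enumerating the rotation at $w$ face-by-face under the constraints of part~(a), Lemma~\ref{adj}, and Lemma~\ref{K_4}.
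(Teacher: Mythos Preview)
The paper does not prove Lemma~\ref{2-con1}; it is quoted from \cite{EH} without argument. So there is no in-paper proof to compare against, and I can only assess your proposal on its own merits.

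Part (a) is fine: the interleaving-chord argument in the complementary disk is the standard way to do this, and it goes through.

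Part (b) has real gaps in both halves.

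\textbf{Spanning.} Your key step asserts that if every edge $vu_i$ at $v$ is crossed by $x_iy_i$, then necessarily $\{x_i,y_i\}=\{u_{i-1},u_{i+1}\}$. This is not justified. Lemma~\ref{K_4} only tells you $x_i,y_i\in N_G(v)=\{u_1,\dots,u_d\}$; it does not pin them down as the immediate rotational neighbours of $u_i$. The face between $vu_i$ and $vu_{i+1}$ certainly contains $v$ and one endpoint of $x_iy_i$ on its boundary, but Lemma~\ref{adj} then only says that endpoint is adjacent to $v$, which you already knew. Nothing forces it to equal $u_{i+1}$. Your subsequent case analysis on $d$ rests on this identification, so it does not stand as written. (Also, ``planarization is the non-planar $K_{4,3}$'' is at best a slip: a planarization is planar by definition.)

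\textbf{2-connectedness.} You write that for a crossing pair $ab,cd$ the four edges $ac,ad,bc,bd$ ``are clean in the local $K_4$-drawing''. That is true inside the $K_4$, but in $G$ any of those four edges may be crossed by some other edge of $G$, so they need not lie in $G^*$ at all. Hence your proposed $A$--$B$ path through the $K_4$ may not exist in $G^*-w$, and the case analysis you outline cannot get started. The connectivity sentence ``walking from face to face across shared clean boundary edges'' has the same defect: a face boundary can consist entirely of half-edges (a nest, for instance), so adjacent faces need not share a clean edge. You have correctly identified this step as the main obstacle; as written it is not overcome.
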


Applying Lemmas~\ref{K_4} and~\ref{2-con1} (b), 
we can establish 
the following conclusion on the minimum number of clean edges
in an induced subgraph of $G$ incident with a vertex.

\begin{lemma}\label{clean-e}
	Let $G'$ be a vertex-induced subgraph of $G$.  For any $w\in V(G')$,  if $d_{G'}(w)\geq 3$ ,  
	then there are at least two edges in 
	$\partial_{G'}(w)$ which are clean in $G'$. 
\end{lemma}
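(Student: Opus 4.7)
The plan is to argue by contradiction. Assume $d_{G'}(w)\ge 3$ but that at most one edge of $\partial_{G'}(w)$ is clean in $G'$; then at least two edges $wa_1,wa_2\in\partial_{G'}(w)$ are crossing in $G'$. For $i\in\{1,2\}$ let $b_ic_i\in E(G')$ be the edge of $G'$ crossing $wa_i$. By Lemma~\ref{K_4}, $\{w,a_i,b_i,c_i\}$ induces a $K_4$ in $G$, and since all four vertices lie in $V(G')$, this $K_4$ is also a subgraph of $G'$. In particular $wb_i,wc_i\in \partial_{G'}(w)$ for both $i$, producing additional edges of $G'$ at $w$ that can be tracked.

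I would then carry out a local analysis at $w$ relying on two facts: (i) within each $K_4\{w,a_i,b_i,c_i\}$ the crossing edge $wa_i$ is flanked by $wb_i$ and $wc_i$ in the rotation at $w$; and (ii) no $K_4$ admits a good drawing in the plane with three crossings, because its planarization would then be the bipartite graph $K_{3,4}$, which is non-planar. The proof splits into cases according to the size of $\{a_1,b_1,c_1\}\cap\{a_2,b_2,c_2\}$. In the disjoint case the six vertices $a_i,b_i,c_i$ are distinct neighbours of $w$ in $G'$; in the overlapping cases the two $K_4$'s fuse either into a common $K_4$ carrying two crossings both incident to $w$, or into a $K_5$-type configuration on five vertices. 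In each sub-case, any supposition that some flanking edge $wb_i$ or $wc_i$ is \emph{also} crossing in $G'$ spawns yet another $K_4$ at $w$ via Lemma~\ref{K_4}, and the 1-planarity of $G$ combined with the $K_{3,4}$ obstruction rules out the resulting configuration. Lemmas~\ref{adj} and~\ref{2-con1}---controlling the vertices that may appear on a face boundary of $G$ and ensuring $G^*$ is spanning and $2$-connected---are invoked to eliminate borderline configurations (e.g.\ an isolated $K_4$ carrying two crossings) that would otherwise escape the argument.

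The main obstacle I anticipate is the bookkeeping of how presumed crossings at $wb_i,wc_i$ propagate through nested $K_4$'s at $w$, especially when $|\{a_1,b_1,c_1\}\cap\{a_2,b_2,c_2\}|\in\{1,2\}$. Each newly assumed crossing at $w$ invokes Lemma~\ref{K_4} again to produce a further $K_4$ incident to $w$, and one must verify that the resulting combined configuration either contains a doubly-crossed edge (contradicting 1-planarity) or forces some $K_4$ to carry three crossings (contradicting the $K_{3,4}$ argument). Organising this exhaustive case analysis cleanly is the technical heart of the proof.
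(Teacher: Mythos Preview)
Your approach is genuinely different from the paper's and, as you yourself acknowledge, leaves the central case analysis undone. The paper avoids all of that bookkeeping with a short reduction trick: set $H=G'[N_{G'}[w]]$, so that $w$ is a dominating vertex of a $1$-plane graph $H$ with $n(H)\ge 4$; extend $H$ to a maximal $1$-plane graph $H'$; then apply Lemma~\ref{2-con1}(b) to $H'$ directly, which gives two clean edges at $w$ in $H'$. Because $w$ is dominating in $H$, no new edge at $w$ can have been added in passing from $H$ to $H'$, so these two edges already lie in $\partial_H(w)\subseteq\partial_{G'}(w)$, and they remain clean in the smaller graph $H$. A separate one-line argument via Lemma~\ref{K_4} shows that an edge at $w$ is clean in $H$ if and only if it is clean in $G'$ (any edge of $G'$ crossing $wa$ must have both ends in $N_{G'}(w)$, hence lies in $H$). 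That is the whole proof.

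By contrast, your plan tries to derive the conclusion by an ad~hoc local analysis propagating $K_4$'s at $w$. Two specific concerns: first, the step ``any supposition that some flanking edge $wb_i$ or $wc_i$ is also crossing \dots\ rules out the resulting configuration'' is asserted rather than shown; spawning a new $K_4$ at $w$ is not by itself a contradiction, and you give no termination or counting mechanism to prevent the iteration from running indefinitely. Second, your appeal to Lemmas~\ref{adj} and~\ref{2-con1} is delicate because those lemmas are stated for the maximal $1$-plane graph $G$, not for the induced subgraph $G'$, so ``borderline configurations'' in $G'$ cannot be eliminated by invoking them without further argument. Your route may be completable, but the missing idea is precisely the paper's: once you pass to $H=G'[N_{G'}[w]]$ and then to a maximal extension $H'$, Lemma~\ref{2-con1}(b) does all the work in one stroke and no case analysis is needed.
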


\begin{proof}  Let $H=G'[N_{G'}[w]]$.
	Clearly,  $w$ is a dominating vertex of $H$ and $n(H)\ge 4$ by the given condition. 
	Let $H'$ be a maximal $1$-plane graph 
	obtained from $H$ by adding 
	some possible new edges. 
	By Lemma~\ref{2-con1} (b), 
	there are at least two edges 
	in $\partial_{H'}(w)$ which are clean in $H$. 
	Since $w$ is dominating in $H$,  
	such two edges are in $\partial_{H}(w)$.
	
	For any $e\in \partial_H(w)$, 
	if $e$ is crossed by edge 
	$v_1v_2$ in $G'$, 
	then, by Lemma~\ref{K_4},
	we have $wv_i\in E(G')$ for $i=1,2$.
	It follows that $v_1,v_2\in V(H)$
	by the definition of $H$,
	implying that  $v_1v_2\in E(H)$.
	Hence any edge incident with $w$ is clean in $H$ if and only if it is clean in $G'$. The lemma then holds.
\end{proof}

It is evident that $\delta(G)\ge 2$. 
A vertex  in $G$ 
of degree $2$ is called a {\it hermit}
(see Figure~\ref{hermit-f}).
The $1$-plane graph obtained 
from $G$ by removing all its hermits
is called the {\it skeleton} of $G$, 
denoted by $\hat{G}$.

\begin{lemma}[\cite{BT,Br}]
	\label{ske-max}
	The skeleton $\hat{G}$ of $G$ is also maximal $1$-plane and each vertex of $\hat{G}$ is of degree at least three.
\end{lemma}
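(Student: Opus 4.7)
The plan is to prove the two assertions in turn, relying on the observation that removing a hermit is a very local operation in the drawing. I would first establish the key auxiliary fact that every edge incident to a hermit is clean. If $vh$ with $h$ a hermit were crossed by some edge $xy$, then by Lemma~\ref{K_4} the four vertices $\{v,h,x,y\}$ induce a $K_4$ in $G$, so $h$ is adjacent to both $x$ and $y$. Since $d_G(h)=2$ with neighbours $v$ and (say) $w$, we must have $\{x,y\}\subseteq\{v,w\}$, forcing $xy$ to share an endpoint with $vh$ and contradicting the drawing convention that incident edges do not cross. Hence the two faces $F_1,F_2$ of $G$ incident to $h$ are distinct, and by Lemma~\ref{adj} applied to each of $F_1,F_2$ (any additional boundary vertex would have to be a neighbour of $h$) both faces have boundary exactly $\{v,h,w\}$. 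Deleting $h$ therefore merges $F_1,F_2$ into a single face $F'$ of $\hat G$ whose interior contains an edgeless ``hermit pocket'' formerly occupied by $h$ and its two edges.

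To prove that $\hat G$ is maximal $1$-plane I argue by contradiction. Suppose there exists an edge $e^*=xy$ whose addition to $\hat G$ admits a $1$-plane drawing extension. In every face of $\hat G$ that the curve representing $e^*$ visits, I reroute the corresponding segment inside that face so as to bypass every hermit pocket. This is possible because each face of $\hat G$ is a planar region free of edges of $\hat G$ and each pocket sits strictly inside such a face, so a short planar homotopy inside the face moves the segment away from the pocket without creating new crossings. The rerouted drawing of $e^*$ is disjoint from the hermit edges of $G$, and so it extends the drawing of $G$ to a $1$-plane drawing of $G+e^*$, contradicting the maximality of $G$.

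For the minimum-degree claim I again argue by contradiction: suppose some $v\in V(\hat G)$ has $d_{\hat G}(v)\le 2$. Since $v$ is not a hermit of $G$ we have $d_G(v)\ge 3$, so $v$ has at least one hermit neighbour $h$ in $G$. Writing $u$ for the other neighbour of $h$, Lemma~\ref{adj} applied to a face of $G$ whose boundary contains $\{v,h,u\}$ gives $uv\in E(G)$. Since $\hat G$ has just been shown to be maximal $1$-plane, Lemmas~\ref{adj} and~\ref{2-con1} apply to $\hat G$ as well. From Lemma~\ref{2-con1}(b) the clean-edge subgraph of $\hat G$ is spanning and $2$-connected, so $v$ has at least two clean edges in $\hat G$. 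A careful analysis of the face structure of $\hat G$ at $v$---each face incident to $v$ being obtained from $G$ by collapsing the hermit pockets of $v$'s hermit neighbours---shows that under $d_{\hat G}(v)\le 2$ some such merged face of $\hat G$ must carry a boundary vertex not adjacent to $v$ in $\hat G$, contradicting Lemma~\ref{adj} applied to $\hat G$.

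The main obstacle lies in making the rerouting step fully rigorous: one must specify the hermit pocket as a topological disc bounded by $h$'s two clean edges together with a short arc interior to the merged face of $\hat G$, and verify via a Jordan-curve argument that any portion of $e^*$'s drawing inside the pocket can be homotoped out of it without picking up extra crossings. The subtlety is that $e^*$ may be forced to cross the (now-deleted) path $v$--$h$--$w$ from one side to the other, in which case the rerouting must wind around one of the endpoints $v$ or $w$, requiring careful bookkeeping of the cyclic order of edges at that vertex. The case analysis for the minimum-degree claim is also delicate because $v$ may have several hermit neighbours whose partners coincide or form nested triangles meeting at $v$, and all such subcases need to be handled uniformly.
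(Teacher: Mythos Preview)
The paper does not supply its own proof of this lemma; it is quoted from \cite{BT,Br}. So there is no in-paper argument to match against, and I will simply assess your outline.

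Your maximality argument is sound in spirit but over-engineered. The ``rerouting'' obstacle you flag is illusory: the merged face $P$ of $\hat G$ that replaces the two faces incident to a hermit $h$ has only $u,v$ as boundary vertices (any other boundary vertex would lie on $\partial F_1$ or $\partial F_2$ and hence be adjacent to $h$ by Lemma~\ref{adj}), so $\partial P$ consists of at most the edge $uv$ together with half-edges of already-crossed edges. A new edge $e^*$ added to $\hat G$ cannot cross the latter (they already carry a crossing), cannot cross $uv$ and also terminate at $u$ or $v$, and cannot both enter and leave $P$ through $\{u,v\}$ without being $uv$ itself. Hence $e^*$ never enters $P$ at all, and putting $h$ back creates no new crossing. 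No homotopy or winding argument is needed.

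Your minimum-degree argument, on the other hand, has a genuine gap. Invoking Lemma~\ref{2-con1}(b) for $\hat G$ only gives two clean edges at $v$, and your ``careful analysis of the face structure'' is asserted, not performed; in fact, if $d_{\hat G}(v)=2$ with neighbours $p,q$, Lemma~\ref{adj} applied to $\hat G$ cannot produce a contradiction, since every face of $\hat G$ at $v$ legitimately has boundary vertices contained in $\{v,p,q\}$. The direct route is through the nest structure: if $v$ has a hermit neighbour $h$ with $N_G(h)=\{u,v\}$, then by Lemma~\ref{hermit2} (see Figure~\ref{hermit-f}) there are two crossing edges $va_1,\,va_2$ forming part of the nest boundary. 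By Lemma~\ref{K_4} each $a_i$ lies in a $K_4$ with $u$ and $v$, hence has degree at least $3$ and is not a hermit; moreover $a_1\ne a_2$ (simplicity), $a_i\ne u$ (edges sharing an endpoint do not cross), and $a_i\ne h$ (hermit edges are clean). Thus $u,a_1,a_2$ are three distinct non-hermit neighbours of $v$, giving $d_{\hat G}(v)\ge 3$. The case where $v$ has no hermit neighbour is immediate since then $d_{\hat G}(v)=d_G(v)\ge 3$.
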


\begin{lemma}[\cite{BT,Br}]\label{hermit1}
	Let $h$ be a hermit of $G$ with 
	$N_G(h)=\{u,v\}$. Then
	$uv\in E(G)$ and both $hu$ and $hv$ are clean edges of $G$, as shown 
	in Figure~\ref{hermit-f}.
\end{lemma}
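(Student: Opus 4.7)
The plan is to peel off the two claims in the order that makes them easiest: first show that the two edges incident with the hermit $h$ are clean, and then deduce $uv\in E(G)$ by looking at a face incident with $h$.

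For the first part, I would argue by contradiction using Lemma~\ref{K_4}. Suppose $hu$ is a crossing edge, say it is crossed by some edge $xy$ of $G$. Then by Lemma~\ref{K_4}, $G[\{h,u,x,y\}]\cong K_4$; in particular all four vertices are distinct and $h$ is adjacent to both $x$ and $y$. Since $xy$ crosses $hu$, neither $x$ nor $y$ equals $h$ or $u$, so $\{x,y\}$ would contribute two neighbors of $h$ distinct from $u$, forcing $d_G(h)\geq 3$. This contradicts the fact that $h$ is a hermit, i.e.\ $d_G(h)=2$. Hence $hu$ is clean, and by the same argument $hv$ is clean.

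For the second part, I would use Lemma~\ref{adj}. Because $h$ has degree exactly $2$ and both edges at $h$ are clean, a sufficiently small disk around $h$ is cut by the two arcs $hu$ and $hv$ into exactly two sectors, each belonging to some face of $G$. Pick either such face $\setr$ and trace $\border(\setr)$: the portion of the boundary near $h$ traverses the clean edges $hu$ and $hv$ consecutively, so both $u$ and $v$ appear on $\border(\setr)$. Applying Lemma~\ref{adj} to this pair of vertices on $\border(\setr)$ gives $uv\in E(G)$, as required.

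There is no real obstacle here; the only thing to be slightly careful about is that the application of Lemma~\ref{K_4} genuinely produces two \emph{new} neighbors of $h$ (i.e.\ that $x,y\notin\{h,u\}$), which is immediate from the fact that $xy$ is an edge that crosses $hu$ and the general drawing assumption that no edge crosses itself and no two edges share more than endpoints and a crossing point. Once the cleanness of $hu$ and $hv$ is in hand, the existence of the edge $uv$ is just one invocation of Lemma~\ref{adj} on the face witnessed by the local picture in Figure~\ref{hermit-f}.
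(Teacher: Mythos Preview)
Your proof is correct. The paper itself does not prove this lemma---it is cited from \cite{BT,Br} without a proof---so there is no in-paper argument to compare against; your two-step approach (Lemma~\ref{K_4} to force cleanness, then Lemma~\ref{adj} on a face through $h$ to get $uv\in E(G)$) is exactly the natural route and matches the standard argument in those references.
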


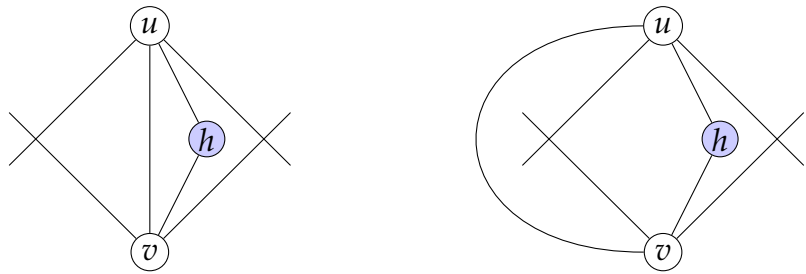
\begin{figure}[H]
	\centering
	\begin{tikzpicture}[scale=0.75]
		
		\tikzset{vertex/.style={circle,draw=black,fill=white,inner sep=2pt},none/.style={fill=white,inner sep=0pt},
			hermit/.style={circle,draw=black,fill=blue!20,inner sep=1pt},
			blackedge/.style={thick}
		}
		\begin{scope}[xshift=-4cm]
			
			
			\fill[white] (-2,0) -- (0,-2) -- (2,0) -- (0,2) -- cycle;
			\node[vertex] (u) at (0,2) {$u$};
			\node[vertex] (v) at (0,-2) {$v$};
			\node[hermit] (h) at (1,0) {$h$};
			
			\node[none] (x1) at (-2.5,0.5) {};
			\node[none] (x2) at (-2.5,-0.5) {};
			\node[none] (y1) at (2.5,0.5) {};
			\node[none] (y2) at (2.5,-0.5) {};
			\draw (u) --(x2);
			\draw (u) -- (y2);
			\draw (v) --(x1);
			\draw (v) --(y1);
			\draw (u) --(v);
			\draw (u) --(h);
			\draw (h) --(v);
			
		\end{scope}

		\begin{scope}[xshift=5cm]

			\fill[white] (-2,0) -- (0,-2) -- (2,0) -- (0,2) -- cycle;
			\node[vertex] (u) at (0,2) {$u$};
			\node[vertex] (v) at (0,-2) {$v$};
			\node[hermit] (h) at (1,0) {$h$};
			
			\node[none] (x1) at (-2.5,0.5) {};
			\node[none] (x2) at (-2.5,-0.5) {};
			\node[none] (y1) at (2.5,0.5) {};
			\node[none] (y2) at (2.5,-0.5) {};
			\draw [ bend left=270, looseness=2.5] (u) to (v);
			\draw (u) --(x2);
			\draw (u) -- (y2);
			\draw (v) --(x1);
			\draw (v) --(y1);
			\draw (u) --(h);
			\draw (h) --(v);
		\end{scope}
	\end{tikzpicture}
	
	\caption{A hermit $h$ surrounded by two pairs of crossing edges}
	\label{hermit-f}
\end{figure}

\begin{lemma}[\cite{BT}]\label{hermit2}
Assume that $h$ is a hermit of $G$ with $N_G(h)=\{u,v\}$.
	Let $G'$ denote the $1$-plane graph 
	obtained from $G$ by deleting 
	$h$ and edge $uv$.
	If ${\cal R}$ is the face of $G'$  containing the point corresponding to $h$, 
	then $u$ and $v$ are the only vertices on $\border({\cal R})$. 
\end{lemma}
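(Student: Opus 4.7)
The plan is to identify three specific faces of $G$ whose merger forms $\cal R$ in $G'$, and then to show that each contributes at most $\{u,v\}$ (besides $h$) to the vertex set of $\border(\cal R)$.

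First, let $F_1$ and $F_2$ denote the two faces of $G$ incident with $h$, corresponding to the two angular sectors at $h$ between the clean edges $hu$ and $hv$ (which are clean by Lemma~\ref{hermit1}). By Lemma~\ref{adj}, every vertex on $\border(F_i)$ must be adjacent to $h$ in $G$; since $N_G(h)=\{u,v\}$, the vertices on $\border(F_i)$ lie in $\{h,u,v\}$ for $i\in\{1,2\}$.

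Second, I would choose $F_1$ so that it lies on the side of the closed curve $hu\cup uv\cup vh$ containing $uv$ on its boundary, and let $R_1$ be the closed region on this side. I would show that $R_1$ contains no interior vertex and no interior crossing of $G$: any additional edge at $u$ (other than $uh$ and $uv$) entering $R_1$ would, by the cyclic tracing of the face walk around $u$, place a new vertex onto $\border(F_1)$, which Lemma~\ref{adj} would then force to be adjacent to $h$ -- contradicting $N_G(h)=\{u,v\}$; the same holds at $v$. Combined with the facts that no edge incident to $u$ or $v$ may cross $uv$ (by the drawing convention stated in the introduction) and that $hu, hv$ are clean, this also forces $uv$ itself to be clean in $G$, and $F_1$ to be precisely the triangular interior of $R_1$ bounded by the three edges $hu, uv, hv$.

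Third, let $F_3$ be the face of $G$ on the side of $uv$ opposite $F_1$. I claim the vertices on $\border(F_3)$ are exactly $\{u,v\}$. Otherwise there is a vertex $w\in \border(F_3)\setminus\{u,v\}$, and by Lemma~\ref{adj} one has $w\sim u$ and $w\sim v$ in $G$. I would then add the edge $hw$ by routing an arc from $h$, through the clear interior of $F_1$, crossing the clean edge $uv$ exactly once, and continuing through $F_3$ to $w$. This introduces a single new crossing between $hw$ and the previously clean $uv$, so the enlarged drawing is still 1-plane, contradicting the maximality of $G$.

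Finally, after deleting $h$ (and with it the edges $hu$ and $hv$) and the edge $uv$, the three faces $F_1, F_2, F_3$ merge into the single face $\cal R$ of $G'$ containing the former position of $h$. Hence the vertices on $\border(\cal R)$ are those on $\border(F_1)\cup\border(F_2)\cup\border(F_3)$ other than $h$, yielding a set contained in $\{u,v\}$; both $u$ and $v$ plainly survive on the boundary. The main obstacle is the third step, where ruling out an extra vertex on $\border(F_3)$ is the only essential use of $G$'s maximality and requires the add-edge-across-$uv$ trick.
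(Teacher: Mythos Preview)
The paper does not prove this lemma itself; it is cited from \cite{BT}. So there is no in-paper argument to compare against, and I will simply assess your proposal.

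Your Step~1 and Step~3 are sound. The maximality trick in Step~3---routing $hw$ through the clean triangle $F_1$ and across $uv$---is exactly the right idea, and it correctly rules out a third vertex on $\border(F_3)$.

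The gap is in Step~2. You claim that an additional edge at $u$ entering $R_1$ would force a new vertex onto $\border(F_1)$. This is false: the face walk, after leaving $u$ along such an edge, may hit a \emph{crossing} rather than a vertex, and then continue along the crossing edge directly to $v$. No new vertex appears on $\border(F_1)$, yet $F_1$ is not the triangle $huv$. This is precisely the configuration in the right panel of Figure~\ref{hermit-f}, where both faces incident with $h$ are bounded by $hu$, a pair of half-edges through a crossing, and $vh$, while the edge $uv$ is routed elsewhere and does not bound either face. So your conclusion that $F_1$ equals the triangular interior of $R_1$ (and hence that $uv$ is clean and borders $F_1$) is not established---indeed it can fail.

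The repair is easy: drop the attempt to force the triangle, and instead split into two cases. If $uv$ lies on $\border(F_1)$ (equivalently, $F_1$ is the clean triangle $huv$), your Step~3 applies and the three faces $F_1,F_2,F_3$ merge into ${\cal R}$. If $uv$ lies on neither $\border(F_1)$ nor $\border(F_2)$, then deleting $h$ merges $F_1$ and $F_2$ into a face whose only vertices are $u$ and $v$ (by your Step~1), and the subsequent deletion of $uv$ does not touch this face at all; hence ${\cal R}=F_1\cup F_2$ and you are done immediately.
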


Observe that face 
${\cal R}$ described  in Lemma \ref{hermit2} 
is actually bounded by two pairs of half-edges. 
Such a face will be called a nest.

A  {\it nest} of $G$ is a \label{pnest}
region (finite or infinite) of $G$
bounded by four half-edges $\alpha_1u, \alpha_1 v, \alpha_2 u,\alpha_2 v $, 
where $u,v\in V(G)$ and $\alpha_1$
and $\alpha_2$ are crossings,
such that  there are no  vertices and edges of $G$ in the interior of the region,  except possibly for  the edge $uv$,  as shown in Figure~\ref{nest}.
Such a nest is denoted by 
$\setn=\langle \{u, v\}, \{\alpha_1,  \alpha_2\}\rangle$.
Vertices $u$ and $v$ are called 
the {\it supporting  vertices} of $\setn$
and $u\alpha_1, u\alpha_2$, $v\alpha_1$, and $v\alpha_2$  are called the {\it supporting  half-edges}  of $\setn$. 

\begin{figure}[h]
	\usetikzlibrary{backgrounds}
	\centering
	\begin{tikzpicture}[scale=0.75]
		
		\tikzset{vertex/.style={circle,draw=black,fill=white,inner sep=2pt},
			blackedge/.style={}
		}
		\begin{scope}[xshift=-4cm]
			
			
			\fill[gray!20] (-3,-3) rectangle (3,3);
			\fill[white] (-2,0) -- (0,-2) -- (2,0) -- (0,2) -- cycle;
			\node[vertex] (u) at (0,2) {$u$};
			\node[vertex] (v) at (0,-2) {$v$};
			\node at (-2.5, 0) {$\alpha_1$}; 
			\node at (2.5, 0) {$\alpha_2$};  
			
			\node[vertex] (x1) at (-2.5,0.5) {};
			\node[vertex] (x2) at (-2.5,-0.5) {};
			\node[vertex] (y1) at (2.5,0.5) {};
			\node[vertex] (y2) at (2.5,-0.5) {};
			\draw (u) -- node[midway, above left] {$e_1$} (x2);
			\draw (u) -- node[midway, above right] {$e_2$} (y2);
			\draw (v) -- node[midway, below left] {$f_1$} (x1);
			\draw (v) -- node[midway, below right] {$f_2$} (y1);
			
			\node at (0,0) {$\mathcal{N}$};

		\end{scope}
		\begin{scope}[xshift=5cm]
			\draw[dashed] (-3.5,-3) rectangle (3.5,3);
			\coordinate (u) at (0, 2);
			\coordinate (v) at (0, -2);
			\coordinate (x) at (-2, 0.5);
			\coordinate (y) at (-2, -0.5);
			\coordinate (w) at (2, 0.5);
			\coordinate (z) at (2, -0.5);
			
			\draw[blackedge, fill=gray!20] [in=135, out=165, looseness=1.75] (u) to node[pos=0.6, above left] {$e_1$}  (y) [in=-165, out=-135, looseness=1.75] (x) to node[pos=0.6,below left] {$f_1$} (v) [in=45, out=15, looseness=1.75] (u) to node[pos=0.6, above right] {$e_2$} (z) [in=-15, out=-45, looseness=1.75] (w) to node[pos=0.6, below right] {$f_2$}  (v);
			
			\fill[gray!20] (v) -- (z) -- (w) -- (u) -- (x) -- (y) -- (v) ;
			
			\node [vertex] at (u) {$u$};
			\node [vertex] at (v) {$v$};
			\node [vertex] at (x) {};
			\node [vertex] at (y) {};
			\node [vertex] at (w) {};
			\node [vertex] at (z) {};
			\node at (-3,0) {$\mathcal{N}$};
			\node at (-1.9, 0) {$\alpha_1$}; 
			\node at (1.9, 0) {$\alpha_2$};  
		\end{scope}
	\end{tikzpicture}
	
	(a)\hspace{5 cm} (b)
	
	\caption{The  nest $\setn$ 
		(the white region) at (a)  is finite, and  the nest (the white region) $\setn$ at (b) is  infinite, where the possible drawn edge $uv$ is omitted}
	\label{nest}
\end{figure}
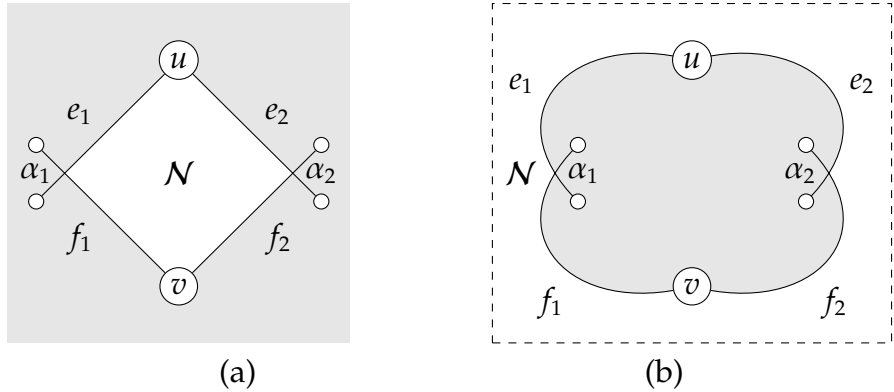

We now end this section by 
applying Lemma~\ref{hermit2}  to get
the following connection between 
hermits and nests. 

\begin{lemma}\label{h-nest}
Each hermit of $G$ is within a nest of 
$\hat G$ and 
there is at most one hermit of $G$ 
within each nest of $\hat G$.
\end{lemma}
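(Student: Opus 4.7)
The plan is to prove the two statements separately, in each case relying on Lemma~\ref{hermit2} together with the observation preceding the nest definition: the face of $G'=G-h-uv$ containing the point of a hermit $h$ with $N_G(h)=\{u,v\}$ is a nest of $G'$ with supporting vertices $u,v$, bounded by four half-edges along two pairs of crossing edges.

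For the first statement, I take a hermit $h$ with $N_G(h)=\{u,v\}$ and the nest $\mathcal{R}$ supplied by Lemma~\ref{hermit2}, and I show that $\mathcal{R}$, viewed as a region of $\hat{G}$, is still a nest. The boundary of $\mathcal{R}$ consists of four half-edges along crossing edges $e_1,e_2$ at $u$ and $f_1,f_2$ at $v$; Lemma~\ref{hermit1} forces every edge at a hermit to be clean, so no endpoint of any crossing edge can be a hermit, and in particular $u,v$ and the outer endpoints of these four edges all belong to $V(\hat{G})$. Hence the four bounding edges persist in $\hat{G}$. Next, the interior of $\mathcal{R}$ in $G$ already contains only $h$, the edges $hu,hv$, and possibly $uv$ (nothing else can fit, since $\mathcal{R}$ is a face of $G-h-uv$); passing to $\hat{G}$ removes the hermit $h$ and its two incident edges, leaving at most $uv$ inside. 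Both conditions of the nest definition are therefore satisfied, and the point of $h$ lies in the interior of this nest.

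For the second statement, I argue by contradiction: assume two distinct hermits $h,h'$ of $G$ both lie in the interior of a single nest $\mathcal{N}$ of $\hat{G}$ with supporting vertices $p,q$. First I establish $N_G(h)=N_G(h')=\{p,q\}$. Writing $N_G(h)=\{u,v\}$, the clean edges $hu,hv$ cannot cross $\partial\mathcal{N}$, so $u,v\in\overline{\mathcal{N}}$; since a hermit has no crossing edges while the supporting vertices of any nest need crossing edges to carry the bounding half-edges, $u$ and $v$ are non-hermits and thus lie in $V(\hat{G})$. The interior of $\mathcal{N}$ contains no vertex of $\hat{G}$, so $u,v\in\partial\mathcal{N}$, forcing $\{u,v\}=\{p,q\}$; the same reasoning gives $N_G(h')=\{p,q\}$. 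Now I apply Lemma~\ref{hermit2} to $h$ with $G^*=G-h-pq$: the face $\mathcal{R}^*$ of $G^*$ containing the point of $h$ has only $p,q$ as vertices on its boundary. However, in $G^*$ the hermit $h'$ still has both clean edges $h'p,h'q$, and these arcs must lie inside $\mathcal{N}$ (being clean arcs connecting the interior point $h'$ to the boundary vertices $p,q$). The path $p\!-\!h'\!-\!q$ subdivides $\mathcal{N}$ into two sub-regions; the sub-region containing the point of $h$ has $h'$ as a vertex on its boundary, directly contradicting the conclusion of Lemma~\ref{hermit2}.

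The step I expect to need the most care is the claim that the supporting vertices of any nest must be non-hermits: this is used in Part 1 (to ensure that the bounding edges and their endpoints survive in passing to $\hat{G}$) and in Part 2 (to force $\{u,v\}=\{p,q\}$). Its clean justification is that the four bounding half-edges of any nest are segments of crossing edges incident with its supporting vertices, while Lemma~\ref{hermit1} guarantees every hermit has only clean incident edges; hence a hermit can never serve as a supporting vertex of a nest. Once this point is in hand, both parts of the proof proceed smoothly along the outlines above.
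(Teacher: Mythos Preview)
Your proof is correct and follows precisely the route the paper signals: the paper states the lemma without proof, prefacing it only with ``applying Lemma~\ref{hermit2}'', and your argument fills in exactly those details by deriving both assertions from Lemma~\ref{hermit2} together with Lemma~\ref{hermit1}. One small presentational point: in Part~2, when you argue that the neighbours $u,v$ of a hermit are non-hermits, the cleanest justification is not that ``supporting vertices of any nest need crossing edges'' (you have not yet placed $u,v$ as supporting vertices of $\mathcal{N}$) but rather the direct observation from Lemma~\ref{hermit1}/Figure~\ref{hermit-f} that $u$ and $v$ are each incident to the two crossing edges surrounding $h$, hence have degree at least four---this is implicit in your Part~1 and in the remark after Lemma~\ref{hermit2}, so the logic is sound.
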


\section{$K_4$-extensions and the rules applied}\label{sec3}

Let $G$ be any simple graph, 
$G'$ be a vertex-induced subgraph of $G$ and $F$ be a  $K_4$-subgraph of $G$ with
$1\le |V(F)\cap V(G')|\le 3$.
Let $G''$ denote the graph $G[V(G')\cup V(F)]$.  
We say $G''$ is a 
{\it $K_4$-extension} of $G'$ through $F$, 
and $F$ is called a {\it $K_4$-link} from $G'$. 
There are three types of 
$K_4$-links
 as defined below:
 
 \vspace{-3 mm}
 
\begin{itemize} [itemsep=-1mm]
	\item if $ |V(F)\cap V(G')|=3$, then $F$ is called a {\it  strong $K_4$-link} from $G'$
	(see Figure~\ref{swme} (a)); 
	
	\item if $ |V(F)\cap V(G')|=2$, then $F$ is called a {\it  weak $K_4$-link} from $G'$
	(see Figure~\ref{swme} (b)); 
	and 
	
	\item if $ |V(F)\cap V(G')|=1$, then
	$F$ is called a {\it  micro $K_4$-link} from $G'$
	(see Figure~\ref{swme} (c)).
\end{itemize}

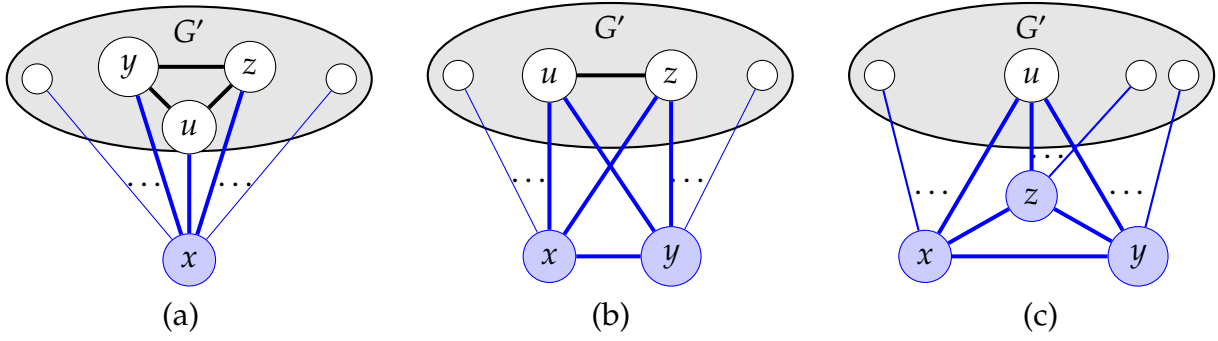
\begin{figure}[!h]
	\centering
	\begin{tikzpicture}[scale=0.8]
		\tikzset{
			whitenode/.style={circle, draw=black, fill=white, minimum size=6mm},
			whitenode2/.style={circle, draw=black, fill=white, minimum size=3mm},
			bluenode/.style={circle, draw=blue, fill=blue!20, minimum size=6mm},
			blueedge/.style={draw=blue},
			blackedge/.style={draw=black, line width=1.5pt},
			blueedge1/.style={draw=blue, line width=1.5pt} %
		}
		\draw[thick,fill=gray!20] (0, 3) ellipse (3.0cm and 1.2cm);
		\node (G) at (0,3.8) {$G'$};
		\node [whitenode] (v) at (-1, 3.2) {$y$}; 
		\node [whitenode] (r) at (1, 3.2) {$z$}; 
		\node [whitenode] (u) at (0, 2.2) {$u$};
		\node [bluenode] (x) at (0, 0) {$x$};
		\node [whitenode2] (left_empty) at (-2.5, 3) {};
		\node [whitenode2] (right_empty) at (2.5, 3) {};
		\node [none] (dots_left) at (-0.7, 1.2) {$\cdots$};
		\node [none] (dots_right) at (0.8, 1.2) {$\cdots$};
		\draw [blackedge] (v) --(u)--(r)--(v)--cycle;
		\foreach \target in {u, v, r} {
			\draw [blueedge1] (x) to (\target);
		}
		\draw [blueedge] (x) to (left_empty);
		\draw [blueedge] (x) to (right_empty);
	\end{tikzpicture}
	\hspace{0.5 cm}
	\begin{tikzpicture}[scale=0.8]
		\tikzset{
			whitenode/.style={circle, draw=black, fill=white, minimum size=5mm},
			whitenode2/.style={circle, draw=black, fill=white, minimum size=3mm},
			bluenode/.style={circle, draw=blue, fill=blue!20, minimum size=5mm},
			blackedge/.style={draw=black, line width=1.5pt},
			blueedge/.style={draw=blue},
			blueedge1/.style={draw=blue, line width=1.5pt} %
		}
		\draw[thick,fill=gray!20] (0, 3) ellipse (3.0cm and 1.2cm);
		
		\node [whitenode] (u) at (-1, 3) {$u$};
		\node [whitenode] (v) at (1, 3) {$z$};
		\node [bluenode] (x) at (-1, 0) {$x$};
		\node [whitenode2] (left_empty) at (-2.5, 3) {};
		\node [whitenode2] (right_empty) at (2.5, 3) {};
		\node  (dots_left) at (-1.3, 1.2) {$\cdots$};
		\node  (dots_right) at (1.3, 1.2) {$\cdots$};
		
		\node [none] (G0_label) at (0, 3.8) {$G'$};
		\node [bluenode] (y) at (1, 0) {$y$};
		\draw [blackedge] (v) to (u);
		\foreach \source/\target in {u/x, x/v, v/y, u/y, x/y} {
			\draw [blueedge1] (\source) to (\target);
		}
		\draw [blueedge] (right_empty) to (y);
		\draw [blueedge] (x) to (left_empty);
	\end{tikzpicture}
\hspace{0.5 cm} 
	\begin{tikzpicture}[scale=0.8]
		\tikzset{
			whitenode/.style={circle, draw=black, fill=white, minimum size=6mm},
			whitenode2/.style={circle, draw=black, fill=white, minimum size=3mm},
			bluenode/.style={circle, draw=blue, fill=blue!20, minimum size=6mm},
			blackedge/.style={draw=black, thick},
			blueedge/.style={draw=blue, thick},
			blackedge1/.style={draw=black, line width=1.5pt},
			blueedge1/.style={draw=blue, line width=1.5pt} %
		}
		\draw[thick,fill=gray!20] (0, 3) ellipse (3.0cm and 1.2cm);
		\node [bluenode] (x) at (-1.75, 0) {$x$};
		\node [bluenode] (y) at (1.75, 0) {$y$};
		\node [bluenode] (z) at (0, 1) {$z$};
		\node [whitenode2] (left_empty) at (-2.5, 3) {};
		\node [whitenode2] (right_empty) at (2.5, 3) {};

		\node  (dots_left) at (-1.6, 1) {$\cdots$};
		\node  (dots_right) at (1.6, 1) {$\cdots$};

		\node [none] (G0_label) at (0, 3.8) {$G'$};
		\node [whitenode] (u) at (0, 3) {$u$};

		\node [whitenode2] (mid_e) at (1.8, 3) {};

		\node (dots_mid) at (0.3,1.6) {$\cdots$};
		
		\draw [blueedge1] (z) to (x);
		\draw [blueedge1] (z) to (y);
		\draw [blueedge1] (u) to (x);
		\draw [blueedge1] (u) to (y);
		\draw [blueedge1] (x) to (y);
		\draw [blueedge1] (u) to (z);
		\draw [blueedge] (z) to (mid_e);
		\draw [blueedge] (left_empty) to (x);
		\draw [blueedge] (right_empty) to (y);
	\end{tikzpicture}
	
\hspace{2 cm}	(a) \hfill (b) \hfill (c)
\hspace{2 cm}{}
	
\caption{$F=G[\{x,y,z,u\}]\cong K_4$
	and $|V(F)\cap V(G')|=3, 2$ or $1$, respectively
}
	\label{swme}
\end{figure}

Accordingly, 
we define strong, weak and 
micro $K_4$-extensions of $G'$
corresponding to 
the types of $K_4$-links of $F$.
For example, if $F$ is a strong $K_4$-link from $G'$, then 
$G''$ is called 
a strong $K_4$-extension of $G'$. 

If $G$ is a connected graph in which 
each edge is contained in 
some $K_4$-subgraph, then for any proper vertex-induced subgraph $G'$, 
there exists a $K_4$-link $F$ from $G'$. 
We will find 
a $K_4$-extension $G''$ of $G'$
via choosing a $K_4$-link $F$ 
determined by the SWM-rule
defined below.

\vspace{0.2 cm}

\noindent
{\bf SWM-rule} (for choosing a $K_4$-link from $G'$):

\vspace{-3 mm}

\begin{itemize}[itemsep=-1mm]
	\item(Rule S)  
	arbitrarily choose a strong $K_4$-link $F$ from $G'$ if such a $K_4$-link exists;

	\item (Rule W) 
	if there are no strong $K_4$-links from $G'$, but weak $K_4$-links from $G'$ exist, we arbitrarily choose a weak $K_4$-link $F$ from $G'$; and 
	
	\item (Rule M) 
	if there are neither strong nor weak $K_4$-links from $G'$, we arbitrarily choose a micro $K_4$-link $F$ from $G'$. 
\end{itemize}

If $G$ is a connected graph 
with $n(G)\ge 4$ and 
each edge in $G$ is contained in 
some $K_4$-subgraph, then it is clear that 
$G$ has a sequence of vertex-induced subgraphs $G_0, G_1, \dots, G_N$, where
$G_0\cong K_4$ and $G_N=G$
such that  for each $i\in \brk{N}$, 
$G_i$ is a $K_4$-extension of 
$G_{i-1}$ through a $K_4$-link $F_i$
which is determined by the SWM-rule. 

We now focus on a maximal $1$-plane graph $G$ with the property that each edge in $G$ is contained 
in some $K_4$-subgraph.\footnote{For any maximal $1$-planar graph $G$, if an edge $e$ in $\hat{G}$ is not contained in any $K_4$-subgraph, then $\hat{G}-e$ is separable 
	(see Figure~\ref{local-str}) 
	and thus  $\hat{G}$ can be decomposed into smaller maximal $1$-planar graphs with the property that each edge is contained in 
	some $K_4$-subgraph.
}
We expect to find such a  sequence of vertex-induced subgraphs $G_0, G_1, \dots, G_N$ of $G$  such that 
for each $i\in \brk{0,N}$,
we can prove 
an inequality,  i.e., the inequality of (\ref{e5-1}) in Section~\ref{sec5}, 
giving a lower bound for $e(G_i)$.
For this purpose, 
we need to strengthen the SWM-rule 
by replacing Rule M by Rule M*
introduced below 
so that 
a suitable micro $K_4$-link $F_i$ from $G_{i-1}$ 
is chosen.

\vspace{0.2 cm} 
\noindent
{\bf Rule M*} (This rule is applied only when there is neither strong nor weak $K_4$-link $F$ from $G'$
which is a proper vertex-induced subgraph of $G$):

\vspace{-3 mm}

\begin{itemize}[itemsep=-1mm]
	\item Choose any vertex 
	$u$  in $G'$ such that 
	$N_G(u)\not\subseteq V(G')$.
	Such a vertex exists due to the condition that $G$ is connected and $G'$ is a proper vertex-induced subgraph of $G$. 
	
	\item Choose  a vertex $x\in N_G(u)\setminus V(G')$ such that 
	for some $w\in N_G(u)\cap  V(G')$,  $ux$ 
	(or its near 
	half-edge incident with $u$) 
	and $uw$ 
	(or its near half-edge
	incident with $u$) 
	are consecutive 
	on $\border({\cal R})$ 
	for some face ${\cal R}$ of $G$.
	Such vertices $x$ and $w$ exist
	due to the condition that 
	$G$ is $1$-plane
	and $N_G(u)\not\subseteq V(G')$.
	
	\item 
	If $ux$ is clean in $G$, 
	some $K_4$-subgraph $F$
	containing edge $ux$ is 
	arbitrarily chosen.
	If $ux$ is crossed by another edge 
	$st$ in $G$,  the subgraph 
	$F:=G[\{u,x,s,t\}]$ is chosen. 
\end{itemize}

Note that at the last step in Rule M*,  
the subgraph $F=G[\{u,x,s,t\}]$ chosen 
is indeed isomorphic to $K_4$
by Lemma~\ref{K_4}.
Since this rule is applied only when 
there is neither strong nor weak 
$K_4$-link from $G'$, 
we have $s,t\not\in V(G')$
and  $F$ is a micro $K_4$-link from $G'$.

When Rule M is replaced by Rule M*, 
the SWM-rule is strengthened to 
a new rule, called the SWM*-rule. 
Note that the SWM*-rule is 
only applied to a maximal $1$-plane 
graph in which 
each edge is contained in some $K_4$-subgraph.

A sequence  $\{G_i\}_{i\in \brk{0,N}}$ of subgraphs of $G$ is called 
a {\it $K_4$-extension sequence} 
of $G$ determined by the SWM*-rule
if $G_0\cong K_4$,  
$G_{N}=G$,  and for 
each $i\in \brk{N}$, 
$G_{i}=G[V(G_{i-1})\cup V(F_i)]$, 
where 
each $F_i$ is a 
$K_4$-link from $G_{i-1}$
determined 
by the SWM*-rule.
By the definition of the SWM*-rule, 
such a sequence exists for each 
maximal $1$-plane 
graph $G$ with $n(G)\ge 4$ and 
the property that 
each edge in $G$ is contained in some $K_4$-subgraph.


\begin{proposition}\label{exten0}
	Let $G$ be a maximal $1$-plane 
	graph with $n(G)\ge 4$ and 
	the property that 
	each edge in $G$ is contained in some $K_4$-subgraph.
	Then
	there exists 
	a {\it $K_4$-extension sequence} 
	$\{G_i\}_{i\in \brk{0,N}}$
	of $G$ determined by the SWM*-rule.
\end{proposition}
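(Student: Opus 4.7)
The plan is to construct the sequence $\{G_i\}_{i\in\brk{0,N}}$ by induction on $i$. For $G_0$, I would pick the vertex set of any $K_4$-subgraph of $G$ and let $G_0$ be the subgraph induced by it; such a $K_4$ exists because $G$ is maximal $1$-plane with $n(G)\ge 4$ and therefore has at least one edge, which by hypothesis lies in some $K_4$-subgraph. Given $G_{i-1}$ with $V(G_{i-1})\subsetneq V(G)$, I would choose a $K_4$-link $F_i$ from $G_{i-1}$ according to the SWM*-rule and set $G_i=G[V(G_{i-1})\cup V(F_i)]$. Since $|V(F_i)\cap V(G_{i-1})|\le 3<4=|V(F_i)|$, each step strictly enlarges the vertex set, so the process terminates with $G_N=G$ in finitely many steps.

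The core task is therefore to verify that at each stage the SWM*-rule can indeed be applied, i.e.\ that some $K_4$-link from $G_{i-1}$ exists. Since $G$ is connected (by Lemma~\ref{2-con1}(b) the clean-edge subgraph $G^*$ is spanning and $2$-connected, so $G$ itself is connected) and $V(G_{i-1})$ is a proper subset of $V(G)$, there is an edge $uw$ of $G$ with $u\in V(G_{i-1})$ and $w\in V(G)\setminus V(G_{i-1})$. By hypothesis, $uw$ lies in some $K_4$-subgraph $F$ of $G$; then $u\in V(F)\cap V(G_{i-1})$ and $w\in V(F)\setminus V(G_{i-1})$ give $1\le |V(F)\cap V(G_{i-1})|\le 3$, so $F$ is a $K_4$-link from $G_{i-1}$. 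Hence if any strong or weak $K_4$-link from $G_{i-1}$ exists, Rule S or Rule W applies; otherwise I would invoke Rule M*.

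The delicate step is to check that Rule M* really produces a valid micro $K_4$-link. A vertex $u\in V(G_{i-1})$ with $N_G(u)\not\subseteq V(G_{i-1})$ exists by connectivity of $G$. Inductively each $G_j$ is connected (the base $G_0\cong K_4$ is connected, and every extension adds a $K_4$-link sharing at least one vertex with $G_{j-1}$), so $u$ also has at least one neighbor in $V(G_{i-1})$. Now examine the cyclic sequence of edges, or of their near half-edges at $u$, around $u$ in the $1$-plane drawing of $G$: each such edge ends at a vertex that is either \emph{inside} (in $V(G_{i-1})$) or \emph{outside} (in $V(G)\setminus V(G_{i-1})$); since both categories are nonempty, at least two cyclically consecutive pairs must be of opposite type, and this produces $x\in N_G(u)\setminus V(G_{i-1})$ and $w\in N_G(u)\cap V(G_{i-1})$ whose half-edges at $u$ are consecutive on the boundary of a common face. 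Finally, the $F=G[\{u,x,s,t\}]$ prescribed by Rule M* is isomorphic to $K_4$ (by Lemma~\ref{K_4} in the crossed case, or by direct choice of a $K_4$-subgraph containing $ux$ in the clean case), with $u\in V(F)\cap V(G_{i-1})$ and $x\notin V(G_{i-1})$; since no strong or weak $K_4$-link from $G_{i-1}$ is available at this stage, $|V(F)\cap V(G_{i-1})|$ is forced to equal $1$, so $F$ is a genuine micro $K_4$-link. The only point demanding more than routine verification is the cyclic-adjacency argument for $x$ and $w$; everything else is a matter of unpacking the SWM*-rule and the stated hypotheses.
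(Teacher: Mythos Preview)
Your proof is correct and matches the approach implicit in the paper, which simply asserts that the sequence exists ``by the definition of the SWM*-rule'' without spelling out details. You have correctly filled in the verification that each step of the rule can be executed, including the cyclic-adjacency argument at $u$ for the existence of $x$ and $w$ in Rule~M*, which the paper justifies only by the one-line remark that $G$ is $1$-plane and $N_G(u)\not\subseteq V(G')$.
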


\section{Properties of $K_4$-extension sequence}\label{sec4}

Throughout this section, 
we assume that $G$ is a  maximal 1-plane graph with 
$n(G)\geq 4$ and the property that 
each edge in $G$ is contained in 
some $K_4$-subgraph.
By Proposition~\ref{exten0}, 
there exists 
a $K_4$-extension sequence 
$\{G_i\}_{i\in \brk{0,N}}$  
of $G$ determined by the SWM*-rule.
Thus, $G_0\cong K_4$, $G_N\cong G$, 
and for each $i\in \brk{N}$, 
$G_i=G[V(G_{i-1})\cup V(F_i)]$,
where 
$F_i$ is a $K_4$-link from $G_{i-1}$ 
determined by the SWM*-rule.
In this section, we will present  
some properties on
$\{G_i\}_{i\in \brk{0,N}}$.

\begin{lemma}\label{micro-aa}
	For any $i\in \brk{N}$, if 
	$F_i$ is a micro $K_4$-link from $G_{i-1}$,
then 
	$E(G_i)\setminus (E(G_{i-1})\cup E(F_i))\ne \emptyset$.
\end{lemma}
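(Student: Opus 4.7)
The plan is to exhibit an edge $w'x'\in E(G)$ with $w'\in V(G_{i-1})\setminus\{u\}$ and $x'\in V(F_i)\setminus V(G_{i-1})$, where $u$ denotes the unique vertex in $V(F_i)\cap V(G_{i-1})$. Any such edge automatically lies in $E(G_i)=E(G[V(G_{i-1})\cup V(F_i)])$; it is excluded from $E(G_{i-1})$ since $x'\notin V(G_{i-1})$; and it is excluded from $E(F_i)$ since $V(F_i)\cap V(G_{i-1})=\{u\}$ forces $w'\notin V(F_i)$. The task therefore reduces to locating such a pair of vertices on the boundary of a single face of $G$, whereupon Lemma~\ref{adj} will supply the required adjacency.

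First I would unpack Rule~M* to name the objects in play. Because $F_i$ is a micro $K_4$-link, Rule~M* has supplied vertices $u\in V(G_{i-1})$, $x\in N_G(u)\setminus V(G_{i-1})$ and a witness $w\in N_G(u)\cap V(G_{i-1})$, together with a face $\mathcal{R}$ of $G$ on whose boundary the items $ux$ (or its near half-edge $u\alpha$ at $u$) and $uw$ (or its near half-edge $u\beta$ at $u$) appear consecutively. Moreover, if $ux$ is clean then $V(F_i)=\{u,x,a,b\}$ for some $K_4$-companions $a,b$, and if $ux$ is crossed by $st$ at $\alpha$ then $V(F_i)=\{u,x,s,t\}$. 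In either situation the three vertices of $V(F_i)\setminus\{u\}$ lie outside $V(G_{i-1})$, since $F_i$ is micro.

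Next I would produce $x'$. If $ux$ is clean then the entire edge lies on $\border(\mathcal{R})$ and I set $x'=x$. If $ux$ is crossed by $st$ at $\alpha$, then continuing the boundary walk of $\mathcal{R}$ past $\alpha$ away from $u$ must traverse one of the two half-edges $\alpha s,\alpha t$; so one of $s,t$ lies on $\border(\mathcal{R})$ and I set $x'$ to that vertex. Either way $x'\in V(F_i)\setminus V(G_{i-1})$ lies on $\border(\mathcal{R})$.

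Producing $w'$ is the step where the hypothesis of Rule~M* is used in an essential way, and this is the main obstacle. The clean case is immediate: if $uw$ is clean then $w$ itself lies on $\border(\mathcal{R})$ and I set $w'=w$. The substantive case is when $uw$ is crossed by some edge $pq$ at $\beta$. Lemma~\ref{K_4} then yields $G[\{u,w,p,q\}]\cong K_4$, a $K_4$-subgraph already containing the two $G_{i-1}$-vertices $u$ and $w$. If exactly one of $p,q$ lay in $V(G_{i-1})$, this $K_4$ would be a strong $K_4$-link from $G_{i-1}$; if neither did, it would be a weak $K_4$-link. Both conclusions contradict the hypothesis that Rule~M* was invoked (which happens only when neither strong nor weak $K_4$-links exist). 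Hence $p,q\in V(G_{i-1})$. Tracing $\border(\mathcal{R})$ past $\beta$ along $\beta p$ or $\beta q$ then reaches a vertex of $V(G_{i-1})\setminus\{u\}$, which I take as $w'$. Since $w'\ne x'$ (they lie in disjoint vertex sets) and both lie on $\border(\mathcal{R})$, Lemma~\ref{adj} finally gives $w'x'\in E(G)$, and the reduction of the opening paragraph completes the proof.
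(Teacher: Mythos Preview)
Your proposal is correct and follows essentially the same approach as the paper's proof: both trace the boundary of the face $\mathcal{R}$ supplied by Rule~M* to locate a vertex $x'\in V(F_i)\setminus V(G_{i-1})$ and a vertex $w'\in V(G_{i-1})\setminus\{u\}$ on $\border(\mathcal{R})$, invoke the absence of strong/weak $K_4$-links to force the endpoints of the edge crossing $uw$ into $V(G_{i-1})$, and finish with Lemma~\ref{adj}. The only difference is organisational: the paper runs a $2\times 2$ case split on the clean/crossed status of $uw$ and $ux$, whereas you decouple the construction of $x'$ and $w'$ into two independent two-case arguments, which is slightly tidier but not substantively different.
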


\begin{proof}
By the assumption, 
$F_i$ is chosen by Rule M*. 
Let $u$ be the only vertex in 
$V(F_i)\cap V(G_{i-1})$.
Then 
$F_i$ contains edge $ux$, where 
$x\in N_G(u)\setminus V(G_{i-1})$,
such that for some $w\in N_G(u)\cap V(G_{i-1})$, 
$ux$ 
(or its near half-edge
incident with $u$) 
and $uw$ 
(or its near half-edge
incident with $u$)
are consecutive 
on $\border({\cal R})$ 
for some face ${\cal R}$ of $G$.

\incase $uw$ is clean in $G$. 

If edge $ux$ is also clean in $G$, 
then $w$, $u$ and $x$ lie on the boundary of the same face in $G$. 
By Lemma~\ref{adj}, $w$ and $x$ are adjacent in $G$ 
and thus also adjacent in $G_{i}$ because $G_{i}$ is a vertex-induced subgraph of $G$. 

Now assume that edge $ux$ crosses 
edge $st$. 
By Rule M*,
$F_i$ is the subgraph $G[\{s, t, u, x\}]$
which is a $K_4$-subgraph by Lemma~\ref{K_4}.
Then, either $w, u, s$, or $w, u, t$ lie on $\border({\cal R})$. By Lemma~\ref{adj} , 
$w$ is adjacent to either $s$ or $t$ in $G_i$.
Thus, the conclusion holds in this case.

\incase $uw$ is crossed by 
edge $bb'$.


If $\{b,b'\}\not \subseteq V(G_{i-1})$,
then $F':=G[\{w,u,b,b'\}]$ 
is a $K_4$-subgraph by Lemma~\ref{K_4},
implying that $F'$ is 
either a strong or weak $K_4$-link from $G_{i-1}$,
contradicting the SWM*-rule 
as $F_i$ is a micro $K_4$-link from $G_{i-1}$. 
Thus, 
$\{b,b'\}\subseteq V(G_{i-1})$. 

If $ux$ is clean in $G$, 
then $\border({\cal R})$ contains $x$ 
and either $b$ or $b'$, implying 
$E(\{x\}, \{b,b'\})\ne \emptyset$
by Lemma~\ref{adj}. 
If $ux$ is crossed by 
some edge $st$, 
then $\border({\cal R})$ 
contains one vertex in $\{s,t\}$ 
and one vertex in $\{b,b'\}$,
implying that 
$E(\{s,t\}, \{b,b'\})\ne \emptyset$
by Lemma~\ref{adj}. 
The conclusion also holds in Case 2.

Hence the result holds.
\end{proof}

By Lemma \ref{micro-aa}, 
	if $F_i$ is a micro $K_4$-link from $G_{i-1}$,  then 
	$E(G_i)\setminus \big(E(G_{i-1})\cup E(F_i)\big)$ is a non-empty set. 
	If this set contains
	exactly one edge, namely $f_i$, 
	then $F_i$ is referred as 
	a {\it simple} micro $K_4$-link from $G_{i-1}$ and $f_i$  is referred to as the {\it extra} edge in $G_i$. 
	Otherwise, $F_i$ is said to be 
	{\it non-simple}.

\begin{lemma}\label{micro-sim}
	For any $i\in \brk{N}$, if 
	$F_i$ is a simple micro $K_4$-link from $G_{i-1}$,
	then  $i<N$ and $G_{i+1}$ is
	a strong or weak $K_4$-extension of  $G_i$.  
\end{lemma}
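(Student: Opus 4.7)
My plan is to exhibit a specific $K_4$-subgraph of $G$ that is a weak or strong $K_4$-link from $G_i$; by the SWM*-rule this both forces $F_{i+1}$ to be of one of these types and in particular yields $G_i\neq G$, hence $i<N$.

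The first step is to pin down the endpoints of the extra edge $f_i$. Because $E(F_i)$ already contains every edge of $G$ on $V(F_i)$, and $E(G_{i-1})$ already contains every edge of $G$ on $V(G_{i-1})$, the extra edge $f_i$ must have one endpoint $\bar y$ in $V(F_i)\setminus V(G_{i-1})$ and the other endpoint $\bar w$ in $V(G_{i-1})\setminus\{u\}$ (note $V(G_{i-1})\cap V(F_i)=\{u\}$).

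The second step uses the standing assumption of this section that every edge of $G$ lies in some $K_4$-subgraph. Applying this to $\bar w\bar y$ yields a $K_4$-subgraph $K$ of $G$ with $V(K)=\{\bar w,\bar y,p,q\}$. The key claim is $V(K)\not\subseteq V(G_i)$. Suppose for contradiction that $p,q\in V(G_i)$. Since $\bar w\notin V(F_i)$, the edge $\bar wp$ of $K$ is either in $E(G_{i-1})$ (if $p\in V(G_{i-1})$) or else is a second edge of $G_i$ outside $E(G_{i-1})\cup E(F_i)$, which is forbidden by the simplicity of $F_i$; hence $p\in V(G_{i-1})$, and likewise $q\in V(G_{i-1})$. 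Next, since $\bar y\notin V(G_{i-1})$, the edge $\bar yp$ of $K$ lies in $E(F_i)$ only if $p\in V(F_i)\cap V(G_{i-1})=\{u\}$; otherwise it would again be a forbidden second extra edge. Thus $p=u$, and symmetrically $q=u$, contradicting $p\neq q$.

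Once this claim is in hand, $\bar w,\bar y\in V(G_i)$ gives $2\le |V(K)\cap V(G_i)|\le 3$, so $K$ is a weak or strong $K_4$-link from $G_i$. In particular $G_i\neq G$, hence $i<N$, and the SWM*-rule applied at step $i+1$ rules out a micro link, so $G_{i+1}$ is a weak or strong $K_4$-extension of $G_i$. The only mildly delicate point is the case analysis in the second step, but it collapses cleanly as soon as one tracks which of the three parts $V(G_{i-1})\setminus\{u\}$, $\{u\}$, $V(F_i)\setminus V(G_{i-1})$ each of $p,q$ belongs to and invokes simplicity each time a would-be second extra edge appears.
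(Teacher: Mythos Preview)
Your proof is correct and follows essentially the same approach as the paper: both arguments take a $K_4$-subgraph $K$ of $G$ containing the extra edge $f_i$, show that $2\le |V(K)\cap V(G_i)|\le 3$, and then invoke the SWM*-rule. The paper asserts without justification that ``$f_i$ is not contained in any $K_4$-subgraph of $G_i$,'' whereas you supply the full case analysis establishing this claim; so your write-up is in fact more complete on this point.
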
 

\begin{proof}
	Let $f_i$ is the extra edge in $G_i$,
	i.e., the only edge in 
	$E(G_i)\setminus (E(G_{i-1})\cup E(F_i))$.
It follows that 
	$f_i$ is not contained in any 
	$K_4$-subgraph of $G_i$, implying that $i<N$.  
	By the assumption on $G$, $f_i$ 
	is contained in some $K_4$-subgraph $F$ of $G$. 
	Thus, $2\le |V(F)\cap V(G_i)|\le 3$, 
	implying that 
	$F$ is a strong or weak $K_4$-link from $G_i$. 
	By the SWM*-rule, $G_{i+1}$ must be a strong or weak $K_4$-extension of $G_i$. The result holds.
\end{proof}

In the following, we study the connectivity of graphs in 
the sequence $\{G_i\}_{i\in \brk{0,N}}$.
We will apply the following conclusion 
which can be verified easily. 

\begin{lemma}\label{2-con}
	Let $H_0$ and $H_1$ be  
	subgraphs of a graph $H$
	with $V(H_0)\cup V(H_1)=V(H)$,  $E(H_0)\cup E(H_1)=E(H)$
	and $|V(H_0)\cap V(H_1)|\ge 2$. 
	Assume that $H_0$ is $2$-connected
	and  $H_1$ is connected.
	Then, $H$ is $2$-connected 
	if and only if 
for each cut-vertex $w$ of $H_1$, 
	 $V(H')\cap V(H_0)\ne \emptyset$ for 
	each component $H'$ of $H_1-w$.
\end{lemma}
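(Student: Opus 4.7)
The plan is to prove the two implications separately. The forward direction I would handle by contrapositive, exhibiting an explicit cut-vertex of $H$. The backward direction I would establish directly by showing $H-v$ is connected for every $v\in V(H)$, using a short case analysis driven by whether $v$ lies in $V(H_0)$ and whether $v$ is a cut-vertex of $H_1$.

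For the direction ($\Rightarrow$), suppose there is a cut-vertex $w$ of $H_1$ and a component $H'$ of $H_1-w$ with $V(H')\cap V(H_0)=\emptyset$. The key observation is that every edge of $E(H_0)$ has both endpoints in $V(H_0)$, so no edge of $E(H_0)$ is incident to $V(H')$. Therefore every edge incident to a vertex of $V(H')$ lies in $E(H_1)$, and within $H_1$ the only edges joining $V(H')$ to $V(H_1)\setminus V(H')$ must pass through $w$. Consequently, in $H-w$ the set $V(H')$ is separated from $V(H_0)\setminus\{w\}$, which is nonempty because $|V(H_0)|\ge |V(H_0)\cap V(H_1)|\ge 2$. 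Thus $w$ is a cut-vertex of $H$, contradicting the $2$-connectedness of $H$.

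For the direction ($\Leftarrow$), observe first that $H$ is connected because $H_0$ and $H_1$ are each connected and share at least one vertex. Fix $v\in V(H)$; I must show $H-v$ is connected. If $v\notin V(H_0)$, then $H_0-v=H_0$ is connected and $V(H_0)\cap V(H_1)\setminus\{v\}=V(H_0)\cap V(H_1)$ is nonempty. If $v\in V(H_0)$, then $H_0-v$ is connected by $2$-connectedness of $H_0$, and $(V(H_0)\setminus\{v\})\cap V(H_1)\ne\emptyset$ by the hypothesis $|V(H_0)\cap V(H_1)|\ge 2$. In either case, when $v$ is not a cut-vertex of $H_1$, $H_1-v$ is connected and shares a vertex with $H_0-v$, so $H-v=(H_0-v)\cup(H_1-v)$ is connected. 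When $v$ is a cut-vertex of $H_1$, the hypothesis guarantees that every component of $H_1-v$ contains a vertex of $V(H_0)$, which lies in $V(H_0-v)$, so each such component is joined to the connected graph $H_0-v$ and $H-v$ is again connected.

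The only subtlety, and it is minor, is ensuring that the intersection $(V(H_0)\setminus\{v\})\cap V(H_1)$ remains nonempty in the subcases where $v\in V(H_0)\cap V(H_1)$; this is precisely the role of the hypothesis $|V(H_0)\cap V(H_1)|\ge 2$, and I do not anticipate any other genuine obstacle. The rest of the argument is routine bookkeeping about connected subgraphs sharing vertices.
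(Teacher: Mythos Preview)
Your proof is correct; the paper itself omits the argument entirely, stating only that the lemma ``can be verified easily,'' and your two-implication case analysis is exactly the standard verification one would supply.
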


For each $i\in \brk{N}$, 
let $F_i'$ denote the subgraph of $G_i$ 
induced by $E(G_i)\setminus E(G_{i-1})$.
Clearly, $F'_i$ is a split graph
(i.e., a graph whose vertex set can be partitioned into a clique and an independent set), 
since $V(F_i)$ is a clique 
and $V(F_i')\setminus V(F_i)$ ($\subseteq V(G_{i-1})$)
is an independent set of $F_i'$.


\begin{lemma}\label{F_i}
Let $i\in \brk{N}$ and 
$e\in E(F_i)$.
For any cut-vertex $w$ of $F_i'-e$,
$w$ is contained in 
$V(F_i)\setminus V(G_{i-1})$ 
and each component of 
$F'_i-e-w$ contains vertices in $G_{i-1}$.
\end{lemma}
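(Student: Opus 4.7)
The plan is to exploit the split structure of $F_i'$. Set $V_N:=V(F_i)\setminus V(G_{i-1})$, $V_O:=V(F_i)\cap V(G_{i-1})$, and $V_E:=V(F_i')\cap(V(G_{i-1})\setminus V(F_i))$; so $V(F_i')=V_N\cup V_O\cup V_E$. Because $F_i\cong K_4$ and $G_{i-1}$ is an induced subgraph of $G$, four structural facts are immediate: (i) $V_N$ spans a clique in $F_i'$, since every pair of new vertices is joined by a $K_4$-edge of $F_i$ and such edges cannot lie in $E(G_{i-1})$; (ii) every vertex of $V_O$ is joined in $F_i'$ to every vertex of $V_N$, via $F_i$'s edges; (iii) every vertex of $V_E$ has at least one neighbor in $V_N$ in $F_i'$, or else it would not appear in $V(F_i')$; and (iv) any edge of $F_i'$ between two vertices of $V_O\cup V_E\subseteq V(G_{i-1})$ would already lie in $E(G_{i-1})$, so $V_O\cup V_E$ is an independent set of $F_i'$. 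In particular, every edge of $F_i'$ is incident to $V_N$, and $|V_O|\ge 1$ because $F_i$ is a $K_4$-link.

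For the first part, I will show that if $w\in V(F_i'-e)$ with $w\notin V_N$, then $w$ cannot be a cut-vertex of $F_i'-e$. The key step is to check that in $F_i'-e$ the set $V_N$ lies in a single component, and every vertex outside $V_N$ is either isolated or joined to this $V_N$-component. For the former, any two vertices of $V_N$ are joined directly by a surviving clique edge unless the single removed edge $e$ happens to be such an edge (only possible when $|V_N|=2$); in that exceptional case they are linked by a length-two path through any vertex $u\in V_O$, using the edges guaranteed by (ii). For the latter, any non-isolated vertex outside $V_N$ still has a neighbor in $V_N$ after deletion of $e$, since all of its potential $F_i'$-neighbors lie in $V_N$. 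So $F_i'-e$ is one main component containing $V_N$ together with some isolated vertices of $V_O\cup V_E$. Deleting $w\in V_O\cup V_E$ now either removes an isolated vertex (strictly decreasing the number of components) or removes a non-$V_N$ vertex of the main component whose removal does not touch any edge between $V_N$ and the rest of the main component, so the main component stays connected. In either case $w$ is not a cut-vertex, a contradiction, hence $w\in V_N$.

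For the second part, assume $w\in V_N$ and let $C$ be a component of $F_i'-e-w$. If $C$ contains any vertex of $V_O\cup V_E$ we are done, since $V_O\cup V_E\subseteq V(G_{i-1})$; the only worry is $C\subseteq V_N-w$. In the strong case $|V_N|=1$, so $V_N-w=\emptyset$ and no such $C$ exists. In the weak case $|V_N|=2$, the unique vertex $y$ of $V_N-w$ is adjacent in $F_i'$ to both vertices of $V_O$ by (ii), and the removal of the single edge $e$ can sever at most one of these two connections, so $C$ actually contains a vertex of $V_O$. In the micro case $|V_N|=3$ with $V_O=\{u\}$ and $V_N-w=\{x_2,x_3\}$, the vertex $u$ is adjacent in $F_i'$ to both $x_2$ and $x_3$; if $e\notin\{x_2u,x_3u\}$ both of these edges survive, while otherwise, say $e=x_2u$, the surviving edges $x_3u$ and $x_2x_3$ supply the path $x_2x_3u$. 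In every subcase $u\in C$, completing the proof.

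The one delicate point is the micro subcase, in which the single old vertex $u$ is the only $V(G_{i-1})$-witness available and one must verify that the combined deletion of $w$ and the single edge $e$ can never sever $u$ simultaneously from both vertices of $V_N-w$. Because $F_i$ carries the six edges of a $K_4$ and $e$ is only one edge, a short rerouting through the surviving clique edge of $V_N$ always suffices; a brief case check on whether $e$ lies inside $V_N$, between $V_N$ and $V_O$, or is incident to $w$ is all that is required.
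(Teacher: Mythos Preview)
Your proof is correct and follows the same split-graph approach as the paper, only spelled out in more detail. Two remarks are worth making.

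First, a small expository point in the first part. In the weak case $|V_N|=2$ with $e$ equal to the unique edge inside $V_N$, you connect the two vertices of $V_N$ in $F_i'-e$ via a length-two path through some $u\in V_O$. When you then delete $w\in V_O\cup V_E$ and assert that the main component stays connected, your justification (``does not touch any edge between $V_N$ and the rest of the main component'') only covers the attachments of the remaining non-$V_N$ vertices; it does not by itself guarantee that $V_N$ is still connected, since the path you used may have gone through $w$. The fix is immediate---in the weak case $|V_O|=2$, so the other vertex of $V_O$ supplies an alternative length-two path---but you should say this.

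Second, your handling of the micro case is actually cleaner than the paper's. The paper invokes Lemma~\ref{micro-aa} (the existence of an extra edge in $E(G_i)\setminus(E(G_{i-1})\cup E(F_i))$) to finish the micro case, whereas you observe directly that $\{x_2,x_3,u\}$ carries all three edges of a triangle in $F_i'$, so removing $w$ together with the single edge $e$ can never disconnect $u$ from $V_N-w$. This shows that Lemma~\ref{micro-aa} is not in fact needed for the present lemma; the $K_4$ structure alone suffices.
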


\begin{proof}
	Since $V(F'_i)$  can be partitioned into a clique 
	$V(F_i)$ and an independent set $V(F_i')\setminus V(F_i)$,
	any cut-vertex $w$ of $F_i'-e$ 
 must be contained in $V(F_i)\setminus V(G_{i-1})$.
	If $F_i$ is either a strong or weak $K_4$-link from $G_{i-1}$,
	it is clear that 
	each component of 
	$F'_i-e-w$ contains vertices in $G_{i-1}$.
	If $F_i$ is a micro $K_4$-link from $G_{i-1}$, the result also 
	follows directly from Lemma~\ref{micro-aa}. 
\end{proof}

\begin{lemma}\label{SWM}
	For any $i\in \brk{0,N}$, 
	$G_i$ is $2$-connected.
\end{lemma}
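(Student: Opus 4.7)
The plan is to prove this by induction on $i$, with base case $G_0\cong K_4$ trivially $2$-connected, using Lemma~\ref{2-con} to drive the inductive step. Assuming $G_{i-1}$ is $2$-connected for some $i\in\brk{N}$, I would apply Lemma~\ref{2-con} with $H=G_i$, $H_0=G_{i-1}$, and $H_1=F_i'$; the identities $V(G_{i-1})\cup V(F_i')=V(G_i)$ and $E(G_{i-1})\cup E(F_i')=E(G_i)$ are immediate from the definition of $F_i'$, and $H_0$ is $2$-connected by the inductive hypothesis.

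Next I would verify the remaining hypotheses of Lemma~\ref{2-con}. The overlap condition $|V(G_{i-1})\cap V(F_i')|\ge 2$ is immediate in the strong and weak cases from $|V(F_i)\cap V(G_{i-1})|\ge 2$. In the micro case, $|V(F_i)\cap V(G_{i-1})|=1$, so I would invoke Lemma~\ref{micro-aa} to exhibit an edge in $E(G_i)\setminus(E(G_{i-1})\cup E(F_i))$; since $V(F_i)$ already spans a clique inside $F_i$, such an extra edge must join $V(F_i)\setminus V(G_{i-1})$ to $V(G_{i-1})\setminus V(F_i)$, producing a second vertex of $V(G_{i-1})\cap V(F_i')$. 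For connectedness of $F_i'$: the subgraph $F_i\cong K_4$ is connected, and every $v\in V(F_i')\setminus V(F_i)$ is incident (by the same clique argument) to an extra edge running into $V(F_i)\setminus V(G_{i-1})\subseteq V(F_i)$, so $F_i'$ is connected.

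To check the cut-vertex condition, let $w$ be a cut-vertex of $F_i'$ and let $H'$ be a component of $F_i'-w$. Pick any $e\in E(F_i)$; then $w$ remains a cut-vertex of $F_i'-e$ because $(F_i'-e)-w$ is at least as disconnected as $F_i'-w$. Lemma~\ref{F_i} therefore yields $w\in V(F_i)\setminus V(G_{i-1})$ and that every component of $F_i'-e-w$ meets $V(G_{i-1})$. Since each component of $F_i'-w$ is a union of components of $F_i'-e-w$, the required condition $V(H')\cap V(G_{i-1})\ne\emptyset$ follows, and Lemma~\ref{2-con} delivers that $G_i$ is $2$-connected.

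The chief obstacle is the micro case: there $F_i$ shares only one vertex with $G_{i-1}$, so without extra structure $G_i$ could easily fail to be $2$-connected at that shared vertex. This is precisely what Rule~M$^*$ was engineered to rule out, and the payoff is Lemma~\ref{micro-aa}, which simultaneously secures both the overlap hypothesis $|V(G_{i-1})\cap V(F_i')|\ge 2$ and the cut-vertex analysis above.
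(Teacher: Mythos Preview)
Your approach matches the paper's (induction via Lemmas~\ref{2-con} and~\ref{F_i}), and you have correctly unpacked the hypotheses of Lemma~\ref{2-con}. There is, however, one gap. You write: ``Pick any $e\in E(F_i)$; then $w$ remains a cut-vertex of $F_i'-e$ because $(F_i'-e)-w$ is at least as disconnected as $F_i'-w$.'' But for $w$ to be a cut-vertex of $F_i'-e$ you need $(F_i'-e)-w$ to have \emph{more} components than $F_i'-e$, not merely at least two. If $e$ is a bridge of $F_i'$---as happens in the strong case, where $F_i'$ is a star centred at the unique new vertex and every one of its edges is a bridge---then $F_i'-e$ already has two components and your inequality does not close the argument. (The conclusion happens to be true there too, because the star has at least three leaves, but your stated reason does not establish it.)

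The repair is to choose $e$ with care rather than ``any'' $e$. In the strong and weak cases there is an edge $e\in E(F_i)\cap E(G_{i-1})$ (between the old vertices of $F_i$); then $e\notin E(F_i')$, so $F_i'-e=F_i'$ and Lemma~\ref{F_i} applies verbatim to cut-vertices of $F_i'$. In the micro case all six edges of $F_i$ lie in $F_i'$, but now $F_i'$ contains the full $K_4$ on $V(F_i)$, so $F_i'-e$ is connected for every $e\in E(F_i)$ and your comparison does go through. A smaller wording issue: in arguing that $F_i'$ is connected you invoke ``the subgraph $F_i\cong K_4$ is connected,'' but $F_i$ need not be a subgraph of $F_i'$, since edges of $F_i$ between old vertices lie in $E(G_{i-1})$ and hence outside $E(F_i')$. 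What you actually need (and what holds) is that $F_i'[V(F_i)]$ is connected---a star, $K_4$ minus an edge, or $K_4$ in the strong, weak, and micro cases respectively.
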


\begin{proof}
	Note that $G_0\cong K_4$ is $2$-connected. 
For any $i\in \brk{N}$,
	if $G_{i-1}$
	is $2$-connected, then by Lemmas~\ref{2-con} and~\ref{F_i}, 
	$G_i$ is also $2$-connected. 
	Hence the conclusion holds.
	\end{proof}

An edge $f$ in a 2-connected graph $H$ is said to be {\it removable} in $H$ 
if $H-f$ is still 2-connected.
Otherwise, it is  {\it non-removable}.

\begin{lemma}\label{SWM-a}
	For any  $i\in \brk{N}$,
	the following conclusions hold:
	
	\vspace{-3 mm}
	
	\begin{itemize}[itemsep=-1mm]
		\item[(1)] any edge $f$ in 
		$E(F_i)\setminus E(G_{i-1})$ is 
		removable  in $G_i$;
		
		\item[(2)] if $f\in E(G_{i-1})$
		is removable in $G_{i-1}$, then 
		it is  also removable in $G_{i}$;
		
		\item[(3)] 
		if $F_i$ is a non-simple micro $K_4$-link from $G_{i-1}$, 
		then any edge $f$ in $E(G_i)\setminus \big(E(G_{i-1})\cup E(F_i)\big)$ is  
		removable in $G_{i}$; and 
		
		\item[(4)] an edge $f$ in $E(G_i)\setminus \big(E(G_{i-1})\cup E(F_i)\big)$
		is non-removable in $G_{i}$ if and only if $F_i$ is a simple micro $K_4$-link from $G_{i-1}$ and  
		$f$ is the extra edge in $G_i$.
	\end{itemize}
\end{lemma}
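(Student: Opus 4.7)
Proof proposal:

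The plan is to handle parts (1) and (2) as direct applications of Lemma \ref{2-con} to the decomposition $G_i = G_{i-1} \cup F_i'$, to prove part (3) by a direct contradiction argument showing that no vertex of $G_i - f$ is a cut-vertex, and to derive part (4) from these together with an explicit cut-vertex exhibited in the simple-micro case. The 2-connectivity of $G_{i-1}$ and $G_i$ is supplied by Lemma \ref{SWM}, the cut-vertex structure of $F_i' - e$ comes from Lemma \ref{F_i}, and the intersection size $|V(G_{i-1}) \cap V(F_i')| \geq 2$ is immediate for strong and weak links and follows from Lemma \ref{micro-aa} for micro links (which supplies a shared vertex other than $u$).

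For part (1) with $f \in E(F_i) \setminus E(G_{i-1})$, I would take $H_0 = G_{i-1}$ and $H_1 = F_i' - f$. The subgraph $F_i - f$ is connected (a $K_4$ minus one edge), and every vertex of $V(F_i') \setminus V(F_i)$ still attaches to $V(F_i)$ by an edge of $F_i'$ different from $f$ (because $f \in E(F_i)$ has both endpoints in $V(F_i)$), so $H_1$ is connected, and the cut-vertex condition is exactly Lemma \ref{F_i}. For part (2), with $f \in E(G_{i-1})$ removable in $G_{i-1}$, I take $H_0 = G_{i-1} - f$ and $H_1 = F_i'$. The one subtlety is that Lemma \ref{F_i} is stated for $F_i' - e$ rather than $F_i'$; I would fix any $e \in E(F_i) \setminus E(G_{i-1})$ and observe that every cut-vertex of $F_i'$ remains a cut-vertex of $F_i' - e$, while every component of $F_i' - w$ is a union of components of $F_i' - e - w$, so the conclusion of Lemma \ref{F_i} transfers to $F_i'$.

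Part (3) is where I expect the main obstacle, precisely because the natural choice $H_1 = F_i' - f$ can fail to be connected: if $f = v_1 x_1$ was the only edge of $v_1$ in $F_i'$, then $v_1$ becomes isolated in $F_i' - f$. Rather than patch the Lemma \ref{2-con} approach, I argue directly by contradiction. Suppose $G_i - f$ has a cut-vertex $w$; since $G_i$ is 2-connected, $f$ must be a bridge in $G_i - w$. If $w \in \{v_1, x_1\}$ then $f$ is not even an edge of $G_i - w$, an immediate contradiction. If $w = u$, the non-simple hypothesis supplies a second extra edge $f' = v_2 x_2 \neq f$, which is still in $G_i - u - f$ and bridges $G_{i-1} - u$ to $F_i - u$, so $v_1$ and $x_1$ lie in a common component. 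For every other choice of $w$, the subgraphs $G_{i-1} - w$ and $F_i - w$ are each connected and still share the vertex $u$, so $v_1$ and $x_1$ lie in the same component of $G_i - w - f$, contradicting the bridge requirement.

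Part (4) then follows by bookkeeping. For the ``if'' direction, with $F_i$ simple micro and $f = v_1 x_1$ its unique extra edge, $G_i - f - u$ has no edges between $V(F_i) \setminus V(G_{i-1})$ and $V(G_{i-1}) \setminus \{u\}$ (all $F_i$-edges from the new vertices to $V(G_{i-1})$ terminate at $u$, and $f$ was the single extra edge), so $u$ is a cut-vertex of $G_i - f$ and $f$ is non-removable. For the ``only if'' direction I argue by contrapositive: part (3) handles non-simple micro links, and the same case analysis works for strong and weak links with room to spare, since $|V(F_i) \cap V(G_{i-1})| \geq 2$ guarantees a shared vertex between $G_{i-1} - w$ and $F_i - w$ for every $w$ without needing a second extra edge. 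Hence every $f \in E(G_i) \setminus (E(G_{i-1}) \cup E(F_i))$ is removable except when $F_i$ is a simple micro link, in which case the set contains the single element $f$, namely the extra edge.
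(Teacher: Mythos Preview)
Your approach to (1) and (2) matches the paper's in spirit (Lemma~\ref{2-con} with $H_0$ the old graph and $H_1$ built from $F_i'$), while your direct cut-vertex case analysis for (3) is a legitimate alternative that sidesteps the connectivity issue you identify; the paper simply asserts that (1)--(3) all follow from Lemmas~\ref{2-con} and~\ref{F_i} without spelling out these details. Your handling of (4) is also more complete than the paper's, since you explicitly cover the strong and weak cases in the ``only if'' direction.

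There is, however, a genuine gap in your argument for (1). You claim $H_1 = F_i' - f$ is connected because ``$F_i - f$ is connected (a $K_4$ minus one edge)'', but $F_i - f$ is not in general a subgraph of $F_i'$: since $F_i'$ is edge-induced by $E(G_i)\setminus E(G_{i-1})$, every edge of $F_i$ with both endpoints in $V(G_{i-1})$ is absent from $F_i'$. For a strong link this removes three edges (the triangle on $V(F_i)\cap V(G_{i-1})$), so $F_i'$ restricted to $V(F_i)$ is merely the star centred at the new vertex $x$; deleting $f = xu$ then isolates $u$, and $H_1$ is genuinely disconnected. The repair is routine---discard the isolated vertex $u$ from $H_1$ (it lies in $V(H_0) = V(G_{i-1})$ anyway), after which $H_1$ is a star with at least two leaves $v,w'\in V(G_{i-1})$, giving $|V(H_0)\cap V(H_1)|\ge 2$, and the sole cut-vertex $x$ satisfies the component condition trivially. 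For weak links only the edge $uv$ is missing and $F_i' - f$ on $V(F_i)$ is $K_4$ minus two edges, which remains connected, so your conclusion survives there even though your stated reason does not.
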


\begin{proof}
	(1), (2) and (3) 
	can  be proved similarly as 
	Lemma~\ref{SWM} 
	by applying Lemmas~\ref{2-con}
	and~\ref{F_i}. 
	
	In order to prove (4), 
	by the result of (3), we need only to 
	consider the case that 
	$F_i$ is a simple micro $K_4$-link from $G_{i-1}$.
	Notice that in this case, 
	$E(G_i)\setminus \big(E(G_{i-1})\cup E(F_i)\big)$
	has a unique edge $f$, and this edge 
	is the extra edge in $G_i$
	and non-removable in $G_{i}$.
\end{proof}

\begin{lemma}\label{2-cut-a}
	For any $i\in \brk{N}$, if $\{s, t\}$ is a 
	cut-set of $G_i$ but not of $G_{i-1}$,
	then 
	\begin{enumerate}[itemsep=-1mm]
		\item[(1)] $F_i$ is  either a weak or micro $K_4$-link from $G_{i-1}$; and 
		\item[(2)] $\{s,t\}\subseteq V(G_{i-1})$
		or $F_i$ is a micro $K_4$-link from $G_{i-1}$  
		and $|\{s,t\}\cap V(G_{i-1})|=1$.
	\end{enumerate}
\end{lemma}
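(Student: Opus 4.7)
The plan is to prove both parts by a single contrapositive case analysis. Assuming $\{s,t\}$ is a cut-set of $G_i$ but not of $G_{i-1}$, I will run through exactly the configurations that (1) or (2) declares impossible and, in each, exhibit a connected spanning subgraph of $G_i - \{s,t\}$ to obtain a contradiction. The cases are indexed by the link type of $F_i$ together with the pair of integers $(|\{s,t\} \cap V(G_{i-1})|, |\{s,t\} \setminus V(G_{i-1})|)$.

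The only tool I need is Lemma~\ref{SWM}: $G_{i-1}$ is $2$-connected, and $n(G_{i-1}) \ge n(G_0) = 4$. Consequently $G_{i-1} - v$ is connected for every single $v \in V(G_{i-1})$, and $G_{i-1} - \{s,t\}$ is connected whenever $\{s,t\} \subseteq V(G_{i-1})$ (using that $\{s,t\}$ is not a cut-set of $G_{i-1}$). The key structural observation about $F_i$ is that, writing $W := V(F_i) \setminus V(G_{i-1})$ and $A := V(F_i) \cap V(G_{i-1})$, every vertex of $W$ is adjacent in $G_i$ to every vertex of $A$, with $|A| = 3, 2, 1$ when $F_i$ is strong, weak, or micro respectively.

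For (1), suppose $F_i$ is strong, so $W = \{x\}$ and $|A| = 3$. If $x \in \{s,t\}$, then the other vertex $v \in \{s,t\}$ lies in $V(G_{i-1})$ and $G_i - \{s,t\} = G_{i-1} - v$, which is connected. Otherwise $\{s,t\} \subseteq V(G_{i-1})$, so $G_{i-1} - \{s,t\}$ is connected and $x$ still has a neighbour in $A \setminus \{s,t\} \ne \emptyset$ (since $|A|=3$), again making $G_i - \{s,t\}$ connected. Both subcases contradict the hypothesis, ruling out the strong possibility. For the weak case in (2), $W = \{x,y\}$ and $|A|=2$: if $\{s,t\} = W$ then $G_i - \{s,t\} = G_{i-1}$; if $|\{s,t\} \cap W|=1$, say $s = x$ and $t \in V(G_{i-1})$, then the surviving new vertex $y$ is adjacent to some vertex of $A \setminus \{t\}$ (nonempty since $|A|=2$), which combined with connectedness of $G_{i-1} - t$ gives connectedness of $G_i - \{s,t\}$; both contradictions force $\{s,t\} \subseteq V(G_{i-1})$. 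For the micro case, $A = \{u\}$ and $|W|=3$: the only configuration the statement excludes is $\{s,t\} \subseteq W$; in that case, the surviving new vertex in $W \setminus \{s,t\}$ is adjacent to $u \in V(G_{i-1})$, so it attaches to the connected graph $G_{i-1}$ inside $G_i - \{s,t\}$, once more a contradiction.

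No deep idea is used: every step reduces to ``remove at most two vertices from a $K_4$ and count what rescue edges survive.'' The main task is purely organisational, namely to lay out the subcases so that the two parts of the statement separate cleanly and the roles of new versus old vertices in $\{s,t\}$ are tracked correctly. I do not anticipate any genuine obstacle beyond this bookkeeping.
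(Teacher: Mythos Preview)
Your argument is correct and follows essentially the same approach as the paper's own proof: both rule out the forbidden configurations by showing that, after deleting $\{s,t\}$, the surviving new vertices still have a neighbour in $A = V(F_i)\cap V(G_{i-1})$ and hence attach to the connected graph $G_{i-1}-\{s,t\}$ (or $G_{i-1}-t$, or $G_{i-1}$). The paper compresses the weak and micro subcases into the single observation that $V(F_i)\setminus V(G_{i-1})$ is a clique of size $2$ or $3$, whereas you spell out each subcase explicitly; otherwise the reasoning is identical.
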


\begin{proof} (1) 
	Suppose that $F_i$ is a strong $K_4$-link from $G_{i-1}$. 
	Then $V(G_i)\setminus V(G_{i-1})=\{x\}$
	with $|\partial_{G_i}(x)|\ge 3$
	for some vertex $x$.
	As $G_{i-1}$ is $2$-connected by Lemma~\ref{SWM}, we have 
	$\{s,t\}\subseteq V(G_{i-1})$.
	Since $d_{G_i}(x)\ge 3$,
	if $\{s, t\}$ is a cut-set of $G_i$,
	it must be a
	cut-set of $G_{i-1}$,
	a contradiction. 
	The result holds.
	
	(2) Assume that $F_i$ is a weak or micro $K_4$-link from $G_{i-1}$. 
	Then $V(F_i)\setminus V(G_{i-1})$ 
	is a clique of $G_i$ with 
	size $2$ or $3$,
	implying that $G_i-\{s,t\}$ is connected when 
	$\{s,t\}\subseteq V(F_i)\setminus V(G_{i-1})$. 
	Thus, 
	$\{s,t\}\cap V(G_{i-1})\ne \emptyset$,
	and if $|\{s,t\}\cap V(G_{i-1})|=1$,
	then, $F_i$ must be a micro $K_4$-link from $G_{i-1}$. 
	The conclusion holds.
\end{proof}

	For any vertex $v$ (resp., edge $e$) in $G$,
	let $\od(v)$ (resp., $\od(e)$)
	denote the smallest integer $i\ge 0$  
	with $v\in V(G_i)$ (resp., 
	$e\in E(G_i)$). 
	Thus, $0\le \od(v)\le N$
	and $0\le \od(e)\le N$,
	and $\od(v_1v_2)
	=\max\{\od(v_1), \od(v_2) \}$
	for any edge $v_1v_2$ in $G$.
	Clearly, $\od(v)$
	and  $\od(e)$ depend
	on the sequence $\{G_i\}_{i\in \brk{0,N}}$.

\begin{lemma}\label{2-cut-b}
	Let $f$ be a non-removable  edge of $G_i$, where  $i\in \brk{N}$.
	Then, $i_1=\od(f)>0$ and 
	$G_{i_1}$ is obtained from $G_{i_{1}-1}$ by a simple micro $K_4$-extension in which 
	$f$ is an extra edge.
\end{lemma}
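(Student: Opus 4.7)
The plan is to pin down the exact index $i_1=\od(f)$ at which $f$ first appears in the sequence and then apply Lemma~\ref{SWM-a}(4) to identify the type of extension. The backbone of the argument is a monotonicity principle for removability: I will first observe that by Lemma~\ref{SWM-a}(2), if an edge is removable in some $G_j$, then it remains removable in every $G_k$ with $k\ge j$ (proved by a trivial induction, applying Lemma~\ref{SWM-a}(2) one step at a time from $j$ up to $k$). Taking the contrapositive, the non-removability of $f$ in $G_i$ propagates backwards: $f$ is non-removable in every $G_j$ with $i_1\le j\le i$. In particular, $f$ is non-removable in $G_{i_1}$.

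Next I would handle the claim $i_1>0$. If $i_1=0$, then $f\in E(G_0)$, and since $G_0\cong K_4$ is $3$-connected, $G_0-f$ is still $2$-connected, so $f$ is removable in $G_0$. By the monotonicity just established, $f$ would then be removable in $G_i$, contradicting the hypothesis. Hence $i_1\ge 1$.

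Now consider the edge $f$ in $G_{i_1}$. By definition of $i_1=\od(f)$ we have $f\in E(G_{i_1})\setminus E(G_{i_1-1})$, so $f$ lies in one of the two disjoint pieces $E(F_{i_1})\setminus E(G_{i_1-1})$ or $E(G_{i_1})\setminus\bigl(E(G_{i_1-1})\cup E(F_{i_1})\bigr)$. Lemma~\ref{SWM-a}(1) rules out the first possibility, because every edge of $E(F_{i_1})\setminus E(G_{i_1-1})$ is removable in $G_{i_1}$, whereas we proved $f$ is non-removable in $G_{i_1}$. Therefore $f$ lies in the second piece, and Lemma~\ref{SWM-a}(4) applies directly: $f$ non-removable in $G_{i_1}$ forces $F_{i_1}$ to be a simple micro $K_4$-link from $G_{i_1-1}$ and $f$ to be the extra edge, which is precisely the conclusion.

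The proof is essentially a bookkeeping exercise once the monotonicity of removability is in hand, so I do not anticipate a genuine obstacle; the only subtle point is making sure the monotonicity is proved in the right direction and then invoked at the correct index $i_1$, rather than $i$, before appealing to Lemma~\ref{SWM-a}(4).
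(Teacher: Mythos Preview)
Your proposal is correct and follows essentially the same route as the paper's proof: establish monotonicity of removability via Lemma~\ref{SWM-a}(2), deduce that $f$ is non-removable already in $G_{i_1}$, rule out $i_1=0$, and invoke Lemma~\ref{SWM-a}(4). You are in fact slightly more careful than the paper, which applies Lemma~\ref{SWM-a}(4) directly without explicitly noting that Lemma~\ref{SWM-a}(1) is needed to exclude the possibility $f\in E(F_{i_1})\setminus E(G_{i_1-1})$.
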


\begin{proof}
	Since $G_i-f$ is not $2$-connected, 
	$i>0$. 
	Let $i_1=\od(f)$. 
	Clearly, $i_1\le i$.
	If $f$ is removable in $G_{i_1}$, 
	$f$  is removable in $G_j$ for 
	all $j$ with $i_1\le j\le N$
	by applying Lemma~\ref{SWM-a} (2)
	repeatedly, 
	contradicting the given condition.	
	Thus, $f$ is non-removable in $G_{i_1}$, implying that $i_1>0$.
	By Lemma~\ref{SWM-a} (4),
	$F_{i_1}$ is a simple micro $K_4$-link
	from $G_{i_1-1}$ and $f$ is the extra edge in $G_{i_1}$. 
	The result holds.
\end{proof}

\begin{lemma}\label{od-1} 
	Let $xx', e,f$ be edges in $G$. 
	If $\od(e)<\od(f)<\od(x)$ and 
	 $xx'$ crosses $e$, then 
	$f$ is removable 
	in $G_{i}$, where $i=\od(x)$.
\end{lemma}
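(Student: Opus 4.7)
The plan is to argue by contradiction, leveraging the tight structural description of non-removable edges given by Lemma~\ref{2-cut-b} together with the priority built into the SWM*-rule. I would suppose, for the sake of contradiction, that $f$ is non-removable in $G_i$. Then by Lemma~\ref{2-cut-b}, with $j:=\od(f)$, we have $j>0$, the link $F_j$ is a simple micro $K_4$-link from $G_{j-1}$, and $f$ is the extra edge in $G_j$. In particular, Rule M* was invoked at step $j$, so no strong or weak $K_4$-link from $G_{j-1}$ in $G$ can exist.

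Next I would use the order hypotheses to place the relevant vertices with respect to $G_{j-1}$. Writing $e=ab$, the bound $\od(e)<\od(f)=j$ together with $\od(ab)=\max\{\od(a),\od(b)\}$ forces $a,b\in V(G_{j-1})$. On the other hand, $\od(x)=i>j$ yields $x\notin V(G_j)$, hence $x\notin V(G_{j-1})$. Because $xx'$ crosses $e$, Lemma~\ref{K_4} produces a $K_4$-subgraph $F':=G[\{x,x',a,b\}]$ in $G$; the general drawing axiom that forbids crossings between edges sharing a vertex rules out $\{x,x'\}\cap\{a,b\}\neq\emptyset$, so $F'$ has exactly four distinct vertices. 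Since $\{a,b\}\subseteq V(F')\cap V(G_{j-1})$ and $x\notin V(G_{j-1})$, we get $|V(F')\cap V(G_{j-1})|\in\{2,3\}$, so that $F'$ is a weak $K_4$-link from $G_{j-1}$ when $x'\notin V(G_{j-1})$ and a strong $K_4$-link from $G_{j-1}$ when $x'\in V(G_{j-1})$. Either case contradicts the fact that $F_j$ was chosen via Rule M*, completing the argument.

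The only potential pitfall is pure bookkeeping around $\od$: one has to confirm carefully that $\od(e)<j$ indeed places both endpoints of $e$ inside $V(G_{j-1})$ (using that the order of an edge is the maximum of the orders of its endpoints and that each $G_t$ is vertex-induced), and that $\od(x)>j$ excludes $x$ from $V(G_{j-1})$. Once this is verified, the SWM*-priority of strong and weak over micro delivers the contradiction in a single step, so no deeper obstacle is anticipated.
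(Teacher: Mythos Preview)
Your argument is correct and follows essentially the same route as the paper: both use Lemma~\ref{K_4} on the crossing pair $\{xx',e\}$ to obtain a $K_4$ that shares at least two vertices with $G_{j-1}$ (where $j=\od(f)$), and then use the SWM*-priority to contradict the ``simple micro'' conclusion forced by Lemma~\ref{2-cut-b}. The only cosmetic difference is that the paper first records that every extension in the whole range $(\od(e),\od(x)]$ must be strong or weak and then localizes at $j=\od(f)$, whereas you go directly to $j=\od(f)$; your version is slightly leaner but the content is the same.
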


\begin{proof}
	Let $e$ be edge $ab$.
	Since $xx'$ crosses $e$, 
	$F:=G[\{x,x',a,b\}]\cong K_4$
	by Lemma~\ref{K_4}.
	Thus, $|V(F)\cap V(G_{i_1})|\ge 2$,
	where $i_1=\od(e)<i=\od(x)$.
	By the SWM*-rule, 
	$G_j$ is a strong or weak extension 
	from $G_{j-1}$
	for all $j\in \brk{i_1,i}$.
	
	Suppose that $f$ is non-removable in $G_i$. By Lemma~\ref{2-cut-b}, 
	$G_{i_2}$ is obtained from $G_{i_2-1}$ by a simple micro
	$K_4$-extension, where $i_2=\od(f)$,
	contradicting the previous conclusion, as 
	$i_1<i_2<i$. 
	Thus, $f$ is removable and 
	the result holds.
\end{proof}

Now, applying the preceding lemmas, we end this section 
with the following conclusion, 
which plays an important role in the proof of Lemma~\ref{strong}. 

\begin{proposition}
	\label{cut-bad}
	Assume that 
	$F_{i+1}$ is a strong $K_4$-link from  $G_{i}$ and 
	$x\in V(F_{i+1})\setminus V(G_i)$,
	where $i\in \brk{0,N-1}$.
	Then, none of the following situations can happen:
	
	\vspace{-3 mm}
	
	\begin{enumerate} [itemsep=-1mm]
		\item[(1)]  $G_i$ has distinct non-removable edges $f_1=a_1b_1$ and $f_2=a_2b_2$ which are crossed 
		by edges $xc_1$ and $xc_2$ respectively,
		where $c_1,c_2\in V(G_i)$, 
		as shown in Figure~\ref{Pr4.9-f1} (a); and 
		
	\item[(2)] $G_{i}$ has a $3$-cycle $abca$ 
	such that
	$ab$ is non-removable in $G_i$, 
	both $\{a, c\}$ and $\{b, c\}$ are cut-sets of $G_{i}$,  and 
	$xc$ crosses $ab$,
	as shown in Figure~\ref{Pr4.9-f1} (b).
	\end{enumerate}
\end{proposition}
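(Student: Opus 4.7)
The plan is to derive a contradiction in each of (1) and (2) by tracing the non-removable edges to their birth step via Lemma~\ref{2-cut-b}, and then detecting an impermissible $K_4$-link choice made by the SWM*-rule somewhere in the sequence $\{G_j\}_{j \in \brk{0,N}}$.

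For (1), set $\tau_j = \od(f_j)$ for $j = 1, 2$. Lemma~\ref{2-cut-b} yields that each $G_{\tau_j}$ is a simple micro $K_4$-extension whose unique extra edge is $f_j$. If $\tau_1 = \tau_2$, uniqueness of the extra edge would force $f_1 = f_2$, contradicting the hypothesis; so I may assume $\tau_1 < \tau_2$. By Lemma~\ref{K_4}, $G[\{x, c_1, a_1, b_1\}] \cong K_4$, and since $a_1, b_1 \in V(G_{\tau_2 - 1})$ this $K_4$-subgraph is a strong or weak $K_4$-link from $G_{\tau_2 - 1}$. The SWM*-rule then forces $F_{\tau_2}$ to be strong or weak, contradicting $F_{\tau_2}$ being a simple micro link.

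For (2), let $\tau = \od(ab)$; Lemma~\ref{2-cut-b} gives that $F_\tau$ is a simple micro link with $V(F_\tau) = \{u, v_1, v_2, v_3\}$, where $u \in V(G_{\tau-1})$ and the $v_j$'s are new, and with $ab$ the unique extra edge. By the symmetry $a \leftrightarrow b$ in the hypothesis, I take $a$ to be one of the $v_j$ and $b \in V(G_{\tau-1}) \setminus \{u\}$. Lemma~\ref{K_4} gives $G[\{a, b, c, x\}] \cong K_4$, and I split on $\od(c)$. If $\od(c) < \tau$, the $K_4$-subgraph $G[\{a, b, c, x\}]$ is a weak $K_4$-link from $G_{\tau-1}$ (two vertices $b, c$ in and two $a, x$ out), contradicting the simple micro choice $F_\tau$ via the SWM*-rule. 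If $\od(c) = \tau$, then $c$ is one of $v_1, v_2, v_3$ distinct from $a$; but then the edge $bc \in E(G)$ (from the same $K_4$) belongs to $E(G_\tau) \setminus (E(G_{\tau-1}) \cup E(F_\tau))$ alongside $ab$, giving two extra edges and contradicting the simplicity of $F_\tau$. If $\od(c) > \tau$, I invoke the cut-set hypothesis: let $\sigma_1$ be the smallest $j$ such that $\{a, c\}$ is a cut-set of $G_j$. Lemma~\ref{2-cut-a} forces $F_{\sigma_1}$ to be weak or micro, and in the subcase $\sigma_1 = \od(c)$ (so that $|\{a,c\} \cap V(G_{\sigma_1 - 1})| = 1$) it forces $F_{\sigma_1}$ to be micro. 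Meanwhile $G[\{a, b, c, x\}]$ is a strong $K_4$-link from $G_{\sigma_1 - 1}$ when $\sigma_1 > \od(c)$ (three of $a, b, c$ already present, $x$ absent) and a weak $K_4$-link when $\sigma_1 = \od(c)$, so the SWM*-rule forces $F_{\sigma_1}$ to be strong, respectively strong or weak; either subcase collides with the verdict from Lemma~\ref{2-cut-a}.

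The main obstacle I anticipate is the final case of (2): one must identify the correct witness step $\sigma_1$ at which $\{a, c\}$ first becomes a cut-set, keep straight the two subcases $\sigma_1 = \od(c)$ versus $\sigma_1 > \od(c)$, and in each match Lemma~\ref{2-cut-a}'s constraint on $F_{\sigma_1}$ against the link type imposed by the SWM*-rule through $G[\{a, b, c, x\}]$. The second cut-set hypothesis $\{b, c\}$ does not appear to be used directly in this plan; it is recoverable by the $a \leftrightarrow b$ symmetry of the configuration.
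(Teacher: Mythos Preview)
Your proposal is correct and follows essentially the same route as the paper's proof. For (1) you unpack the argument that the paper packages into Lemma~\ref{od-1}; for (2) your case split on $\od(c)$ versus $\tau$ mirrors the paper's Claims 2--5 (with your $a$ playing the role of the paper's $b$, since you take the opposite labeling convention for the new endpoint of the extra edge). Your direct ``two extra edges'' argument in the subcase $\od(c)=\tau$ is a small shortcut compared to the paper, which folds that case into the cut-set analysis; and your final observation that only one of the two cut-set hypotheses is actually invoked (namely the one through the \emph{newer} endpoint of $ab$) is accurate---both are stated in the proposition so that the proof can choose the relevant one after the WLOG on $\od(a)$ versus $\od(b)$.
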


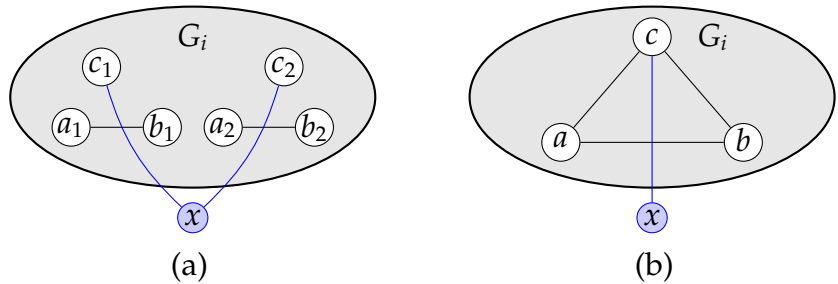
\begin{figure}[!ht]
	\centering
	
	\begin{tikzpicture}[scale=0.8]
		\tikzset{
			whitenode/.style={circle, draw=black, fill=white, minimum size=5mm,inner sep=2pt},
			whitenode2/.style={circle, draw=black, fill=white, minimum size=2mm},
			bluenode/.style={circle, draw=blue, fill=blue!20, minimum size=2mm},
			blueedge/.style={draw=blue},
			blackedge/.style={draw=black, line width=1.5pt}
		}
		\draw[thick,fill=gray!20] (0, 3) ellipse (3.0cm and 1.5cm);
		\node (G) at (0.,4) {$G_i$};	
		\node [bluenode] (x) at (0, 1) {};
		\node [whitenode] (a1) at (-2,2.5) {};
		\node [whitenode] (b1) at (-0.5, 2.5) {};
		\node [whitenode] (a2) at (0.5, 2.5) {};
		\node [whitenode] (b2) at (2, 2.5) {};
		\node [whitenode] (c1) at (-1.5, 3.5) {};
		\node [whitenode] (c2) at (1.5, 3.5) {};
		\node [] () at (0, 1) {$x$};
		\node [] () at (-2,2.5) {$a_1$};
		\node [] () at (-0.5, 2.5) {$b_1$};
		\node [] () at (0.5, 2.5) {$a_2$};
		\node [] () at (2, 2.5) {$b_2$};
		\node [] () at (-1.5, 3.5) {$c_1$};
		\node [] () at (1.5, 3.5) {$c_2$};
		\draw  (a1) to (b1);
		\draw  (a2) to (b2);
		\draw [blueedge,bend left =15] (x) to (c1);
		\draw [blueedge,bend right =15] (x) to (c2);
	\end{tikzpicture}
	\hspace{1 cm}
	\begin{tikzpicture}[scale=0.8]
		\tikzset{
			whitenode/.style={circle, draw=black, fill=white, minimum size=5mm,,inner sep=2pt},
			whitenode2/.style={circle, draw=black, fill=white, minimum size=2mm},
			bluenode/.style={circle, draw=blue, fill=blue!20, minimum size=2mm},
			blueedge/.style={draw=blue},
			blackedge/.style={draw=black, line width=1.5pt}
		}
		
		\node (G) at (1,4) {$G_i$};	
		\node [bluenode] (x) at (0, 1) {};
		\node [whitenode] (c) at (0,4) {};
		\node [whitenode] (a) at (-1.5,2.25) {};
		\node [whitenode] (b) at (1.5,2.25) {};
       \node () at (0, 1) {$x$};
		\node  () at (0,4) {$c$};
		\node  () at (-1.5,2.3) {$a$};
		\node  () at (1.5,2.3) {$b$};
		
		\scoped[on background layer]{
			\draw[thick,fill=gray!20] (0, 3) ellipse (3.0cm and 1.5cm);  
			\draw  (a) -- (b)--(c)-- (a);
            \draw  [blueedge] (x) -- (c);
}
		
	\end{tikzpicture}
	
	(a) \hspace{5.4 cm} (b)
	
	\caption{$F_{i+1}$ is a strong-$K_4$-link from $G_i$ 
		and $x\in V(F_{i+1})\setminus V(G_i)$}
	\label{Pr4.9-f1}
\end{figure}

\begin{proof}
	Suppose (1) happens. 
	For any $j=1,2$, 
	since $f_j$ is non-removable 
	in $G_i$, 
	by Lemma~\ref{2-cut-b},
	$f_j\notin  E(G_{i_j-1})$,
	where $i_j=\od(f_j)>0$, 
	and $G_{i_j}$ is obtained from $G_{i_j-1}$ by a simple micro 
	$K_4$-extension in which 
	$f_j$ is an extra edge. 
	Clearly, $i_1\ne i_2$. 
	Assume that $1\le i_1<i_2\le i$. 
	
	As $\od(x)=i+1$, 
	$1\le i_1<i_2\le i$ implies that  $\od(a_1b_1)<\od(a_2b_2)<\od(x)$. 
	By Lemma~\ref{od-1}, 
	$a_2b_2$ is removable in $G_{i}$, a contradiction to the given condition. 	
	Thus, (1) cannot happen. 
		
	Now suppose (2) happens. 
Since $ab$ is non-removable in $G_i$,  by Lemma \ref{2-cut-b},
	$ab\in E(G_{i_1})\setminus E(G_{i_1-1})$,
	where $0<i_1=\od(ab)\le i$, 
	and $G_{i_1}$ is a simple micro
	$K_4$-extension of $G_{i_1-1}$ 
	in which $ab$ is the extra edge.
	Thus, $\od(a)\ne \od(b)$.
	Assume that $\od(a)<\od(b)=i_1$.
	
	Now we establish the following claims
	to show that (2) cannot happen. 
	
	\setcounter{countclaim}{0}
	
	\inclaim $\{a,b,c,x\}$ is a clique of $G$.
	
	Since $xc$ crosses $ab$, 
	by Lemma~\ref{K_4}, 
	$\{a,b,c,x\}$ is a clique of $G$. 
	The claim holds.

	\inclaim $\od(a)<\od(b)\le \od(c)$.
	
	Suppose the claim fails. Then
	$\od(c)<\od(b)=i_1$. 
	By assumption, 
	we have  $\od(a)<\od(b)=i_1$,
	implying that 
	$\{a,c\}\subseteq V(G_{i_1-1})$.
	By Claim 1, 
	$F:=G[\{a,b,c,x\}]\cong K_4$.
	As $\od(x)=i+1>i_1=\od(b)$, 
	$|V(F)\cap V(G_{i_1-1})|=2$.
	By the SWM*-rule, 
	$G_{i_1}$ must be a
	strong or weak $K_4$-extension
	of $G_{i_1-1}$, 
	contradicting 
	the fact that 
	$G_{i_1}$ is a simple micro
	$K_4$-extension of $G_{i_1-1}$.
Thus, Claim 2 holds.

	\inclaim There exists an integer 
	$s$ with 
	$\od(c)\le s\le i$ such that 
	$\{b,c\}$ is a cut-set of $G_s$ 
	but not $G_{s-1}$, and 
	$G_s$ is not a 
	strong extension of $G_{s-1}$. 
	
	Note that $\od(b)\le \od(c)$ 
	by Claim 2.
	Since $\{b,c\}$ 
	is a cut-set of $G_i$,  
	there exists $s$ with 
	$\od(c)\le s\le i$
	such that $\{b,c\}$ is a cut-set of $G_s$ but not $G_{s-1}$.
	By Lemma~\ref{2-cut-a} (1), 
	$G_s$ is not a 
	strong extension of $G_{s-1}$.
	Thus, Claim 3 holds.
	
	\inclaim $\od(c)=s$. 

Suppose the claim fails. 
Then, by Claim 3, $\od(c)\le s-1$.
By Claim 2, $\od(a)<\od(b)\le \od(c)\le s-1$, 
implying that 
$\{b,c\}\subseteq V(G_{s-1})$.
	
	Since $F=G[\{a,b,c,x\}]\cong K_4$ with $|V(F)\cap V(G_{s-1})|=3$
	and $\od(x)=i+1$, 
	by the SWM*-rule, 
	$G_j$ must be a strong $K_4$-extension
	of $G_{j-1}$ for all 
	$j\in \brk{s,i+1}$,
	contradicting Claim 3.
	Thus, Claim 4 holds.
	
	\inclaim $\od(b)<s$ 
	and $G_s$ is a micro $K_4$-extension
	of $G_{s-1}$. 
	
	By Claim 3, $\{b,c\}$ is a cut-set of $G_s$ but not $G_{s-1}$. 
	By Claim 4, $\{b,c\}\not\subseteq V(G_{s-1})$. 
	Then, by Lemma~\ref{2-cut-a} (2), 
	$G_s$ is a micro $K_4$-extension
	of $G_{s-1}$ and $|\{b,c\}\cap V(G_{s-1})|=1$, 
	implying that $\od(b)\ne \od(c)$. 
	Thus, by Claim 2, $\od(b)<\od(c)=s$. 
	Claim 5 holds.

	By Claims 2 and 5, 
	$\od(a)<\od(b)\le s-1$. 
	By Claim 4, $\od(c)=s$. 
	Thus, 
$|\{a,b,c,x\}\cap V(G_{s-1})|=2$.
As $F=G[\{a,b,c,x\}]\cong K_4$
	and $\od(x)=i+1$, 
	by the SWM*-rule, 
	$G_j$ must be a strong or weak $K_4$-extension
	of $G_{j-1}$ for all $j=s, \dots, i+1$,
	contradicting Claim 5. 
	Hence (2) cannot happen.
\end{proof}

\section{An inequality involving 
order, size and nest numbers
}
\label{sec5}

Throughout this section, 
we assume that $G$ is a maximal 1-plane graph with $n(G)\geq 4$ 
and the property that each edge in $G$ is contained in a $K_4$-subgraph.
Let $\{G_i\}_{i\in \brk{0,N}}$ be a $K_4$-extension sequence of $G$ 
determined by the SWM*-rule. 
Thus, $G_0\cong K_4$ and $G_N\cong G$, 
and for each $i\in \brk{N}$, 
$G_i=G[V(G_{i-1})\cup V(F_i)]$,
where 
$F_i$ is a $K_4$-link from $G_{i-1}$ 
determined by the SWM*-rule.


Let $\Nest(H)$ be the set of nests in a $1$-plane graph $H$, and 
$\nest(H)=|\Nest(H)|$. 
In this section, we shall show that 
if $n(G)\ge 5$, then  
\equ{e5-1}
{
e(G)\geq \frac{7}{3}n(G)
+\frac{1}{3}\nest(G)-3. 
}
In order to prove (\ref{e5-1}), we shall first establish the inequality 
$e(G_{i+1})-e(G_i)\ge \frac 73 (n(G_{i+1})-n(G_i))+\frac 13|\Nest_{i+1}^*|$
for all $i\in \brk{0,N-1}$,
where  $\Nest_{i+1}^*:=
\Nest(G_{i+1})\setminus \Nest(G_i)$.
The proof of this inequality will be completed 
in Subsection 5.1
according to the three possible types of 
$K_4$-links $F_{i+1}$ from $G_i$.

\subsection{Lower bound for $e(G_{i+1})-e(G_i)$} 

\begin{lemma}
	\label{strong}
For any $i\in\brk{0,N-1}$,
if $F_{i+1}$ is a strong $K_4$-link from $G_i$, then
\equ{le-st-e1}
{
e(G_{i+1})-e(G_i)\ge \frac 73 +\frac 13 
|\Nest_{i+1}^*|.
}
\end{lemma}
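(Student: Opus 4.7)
Let $x$ be the unique vertex in $V(F_{i+1})\setminus V(G_i)$, and set $d=d_{G_{i+1}}(x)$. Since $F_{i+1}\cong K_4$ contains a triangle among the three vertices of $V(F_{i+1})\cap V(G_i)$, we have $d\ge 3$, and since $G_{i+1}=G[V(G_i)\cup\{x\}]$ is induced, every edge of $G_{i+1}$ not in $G_i$ is incident to $x$; hence $e(G_{i+1})-e(G_i)=d$. Thus (\ref{le-st-e1}) reduces to proving the integer inequality $|\Nest_{i+1}^*|\le 3d-7$.

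The structural observation I would start from is that every nest $\setn\in\Nest_{i+1}^*$ must have at least one of its two crossings formed by an edge incident to $x$. Otherwise all four supporting half-edges of $\setn$ come from $G_i$, and since $G_i\subseteq G_{i+1}$ and the interior of $\setn$ is empty in $G_{i+1}$, that interior is also empty in $G_i$, forcing $\setn\in\Nest(G_i)$ and contradicting $\setn\in\Nest_{i+1}^*$. I would then partition $\Nest_{i+1}^*$ into two types: \emph{type~A} nests, where $x$ itself is a supporting vertex of $\setn$ (which forces at least two crossed edges at $x$), and \emph{type~B} nests, where both supporting vertices lie in $V(G_i)$ but some crossing of $\setn$ involves an edge incident to $x$.

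Let $c\le d$ be the number of crossed edges at $x$. By Lemma~\ref{K_4}, each crossed edge $xv$ at $x$ crosses some edge $ab\in E(G_i)$ with $\{a,b\}\subseteq N_{G_{i+1}}(x)$; at the resulting new crossing $\alpha$, the four quadrants around $\alpha$ yield at most two candidate type-A supporting pairs ($\{x,a\}$ and $\{x,b\}$) and two candidate type-B pairs ($\{v,a\}$ and $\{v,b\}$). A candidate type-A nest requires two crossed edges at $x$ whose quadrant-partners share a common vertex, while a candidate type-B nest uses the new crossing together with a second crossing lying in $G_i$. The crux is to rule out enough candidates by invoking Proposition~\ref{cut-bad}: part~(1) forbids $x$ from crossing two distinct non-removable edges of $G_i$, which cuts down type-B candidates whose partner crossings are formed by non-removable $G_i$-edges, while part~(2) eliminates the triangle-plus-two-cut-sets configuration that would otherwise create an extra type-A candidate through a single vertex $c$. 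Lemmas~\ref{2-cut-b} and~\ref{od-1} supply the removability and order information needed when applying Proposition~\ref{cut-bad}.

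The main obstacle is the tight case $d=3$, where one must show $|\Nest_{i+1}^*|\le 2$: here the three edges at $x$ are $xy,xz,xu$, and by Lemma~\ref{K_4} each crossed one of them may only cross $zu,yu,yz$ respectively, yielding a rigid configuration in which two invocations of Proposition~\ref{cut-bad} suffice to eliminate all but two of the a priori candidate new nests. For $d\ge 4$ the slack $3(d-3)$ in the target inequality comfortably absorbs the extra candidate nests contributed by each additional edge at $x$, and the verification reduces to a routine sub-case analysis on how the new $x$-edges interact with the existing crossings of $G_i$.
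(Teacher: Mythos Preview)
Your type~A / type~B decomposition matches the paper's, and invoking Proposition~\ref{cut-bad} is indeed the endgame.  But the proposal has a genuine gap and, relatedly, misidentifies where the difficulty lies.

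The missing ingredient is Lemma~\ref{clean-e}: since $d_{G_{i+1}}(x)\ge 3$, at least \emph{two} edges in $\partial_{G_{i+1}}(x)$ are clean in $G_{i+1}$.  Writing $p$ for the number of clean and $\ell$ for the number of crossed edges at $x$, this gives $p\ge 2$, hence $\ell\le d-2$.  Since every nest in $\Nest_{i+1}^*$ uses at least one half-edge of a crossed edge at $x$, the naive count yields $|\Nest_{i+1}^*|\le |\Nest_A^*|+|\Nest_B^*|\le \ell+2\ell=3\ell\le 3(d-2)=3d-6$, already only one short of the target $3d-7$.  Without the bound $p\ge 2$ your counting starts at $3d$, seven away from the goal, and Proposition~\ref{cut-bad} alone cannot close that gap: it rules out certain non-removable / cut-set configurations but does not directly eliminate candidate nests in bulk.

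This reverses your assessment of the cases.  For $d=3$ one has $\ell\le 1$, so $|\Nest_A^*|=0$ (a type~A nest needs two crossings at $x$) and $|\Nest_B^*|\le 2$, giving $|\Nest_{i+1}^*|\le 2=3d-7$ immediately---no appeal to Proposition~\ref{cut-bad} is needed.  Likewise $d=4$ is easy.  The genuinely tight case is $p=2$, $\ell\ge 3$, with $|\Nest_A^*|=\ell$ and $|\Nest_B^*|=2\ell$ both extremal.  This is \emph{not} routine: the paper shows that in this situation $x$ sits inside a face of $G_i$ bounded by an $\ell$-cycle $v_1\cdots v_\ell v_1$ of clean edges, each $e_j\in\partial_{G_{i+1}}(x)$ crosses the cycle edge $f_j=v_{j-1}v_j$, and $\ell-2$ of the $e_j$ must terminate on the cycle.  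One then proves each such $f_j$ is non-removable in $G_i$, and Proposition~\ref{cut-bad}(1) forbids two of them (forcing $\ell\le 3$), while Proposition~\ref{cut-bad}(2) kills $\ell=3$.  Your ``comfortable slack'' claim for $d\ge 4$ is exactly where this structural work is required.
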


\begin{proof}
Since $F_{i+1}$ is a strong $K_4$-link from $G_i$, 
$V(F_{i+1})\setminus V(G_{i})$ has a unique vertex, say $x$.	
Let $p$ (resp., $\ell$) be the number of clean (resp.,  crossing)  edges
in $\partial_{G_{i+1}}(x)$.
Then, $\ell+p=d_{G_{i+1}}(x)
 \geq d_{F_{i+1}}(x)=3$. 

A nest in $\Nest_{i+1}^*$ is said to be {\it new}. 
Let ${\cal N}=\langle \{s,t\}, \{\alpha_1, \alpha_2\}\rangle$ be a new nest,
where $s$ and $t$ are vertices in $G_{i+1}$
and $\alpha_1$ and $\alpha_2$ are crossing points.
Then, either $x\in \{s, t\}$ 
or $\border (\cal N)$  
contains at least one far half-edge incident with $x$. $\setn$ is referred to as {\it type A}
if $x\in \{s, t\}$, and {\it type B}
otherwise,
as shown in Figure~\ref{new-nest}.
Thus,  $\Nest_{i+1}^*$ 
can be partitioned into 
two subsets, i.e., the set $\Nest^*_A$
of new nests in type A 
and the set $\Nest^*_B$ 
of new nests in type B.

Note that, as $d_{G_{i+1}}(x)\geq 3$,
the strong $K_4$-link $F_{i+1}$ 
does not destroy any nest in $G_{i}$ because $x$ can not lie in any nest of $G_{i}$.
Thus, $\nest(G_{i+1})=
\nest(G_i)+|\Nest^*_A|+|\Nest^*_B|$, and
(\ref{le-st-e1}) is transferred into 
the following inequality: 
\equ{le-st-e2}
{
	\ell+p\ge \frac 73+
	\frac{1}3 (|\Nest^*_A|+|\Nest^*_B|).
}

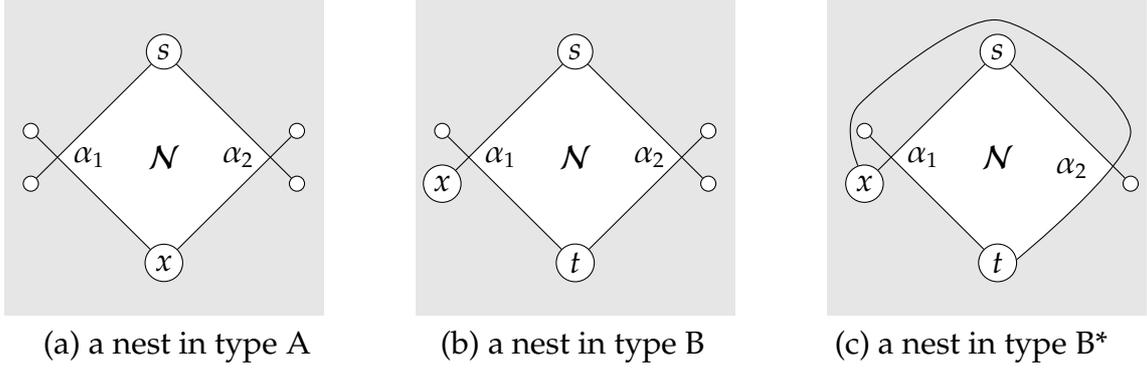
\begin{figure}[h]
  \centering
\begin{tikzpicture}[scale=0.7]

\tikzset{vertex/.style={circle,draw=black,fill=white,inner sep=2pt},
  blackedge/.style={thick}
}


\fill[gray!20] (-3,-3) rectangle (3,3);
\fill[white] (-2,0) -- (0,-2) -- (2,0) -- (0,2) -- cycle;
\node[vertex] (s) at (0,2) {$s$};
\node[vertex] (x) at (0,-2) {$x$};
\node at (-1.4, 0) {$\alpha_1$}; 
\node at (1.4, 0) {$\alpha_2$};  

\node[vertex] (x1) at (-2.5,0.5) {};
\node[vertex] (x2) at (-2.5,-0.5) {};
\node[vertex] (y1) at (2.5,0.5) {};
\node[vertex] (y2) at (2.5,-0.5) {};
\draw (s) -- (x2);
\draw (s) -- (y2);
\draw (x) -- (x1);
\draw (x) -- (y1);

\node at (0,0) {$\mathcal{N}$};


\end{tikzpicture}
\hspace{1 cm} 
\begin{tikzpicture}[scale=0.7]
	
\tikzset{vertex/.style={circle,draw=black,fill=white,inner sep=2pt},
  blackedge/.style={thick}
}


\fill[gray!20] (-3,-3) rectangle (3,3);
\fill[white] (-2,0) -- (0,-2) -- (2,0) -- (0,2) -- cycle;
\node[vertex] (s) at (0,2) {$s$};
\node[vertex] (t) at (0,-2) {$t$};
\node at (-1.4, 0) {$\alpha_1$}; 
\node at (1.4, 0) {$\alpha_2$};  

\node[vertex] (x1) at (-2.5,0.5) {};
\node[vertex] (x2) at (-2.5,-0.5) {$x$};
\node[vertex] (y1) at (2.5,0.5) {};
\node[vertex] (y2) at (2.5,-0.5) {};
\draw (s) -- (x2);
\draw (s) -- (y2);
\draw (x) -- (x1);
\draw (x) -- (y1);

\node at (0,0) {$\mathcal{N}$};
\end{tikzpicture}
\hspace{1 cm} 
\begin{tikzpicture}[scale=0.7]
	
	\tikzset{vertex/.style={circle,draw=black,fill=white,inner sep=2pt},
		blackedge/.style={thick}
	}

	  \begin{scope}[on background layer={color=gray!20}]
      \fill (-3.2,-3) rectangle (2.8,3);
    \end{scope}
	
	\node[vertex] (s) at (0,2) {$s$};
	\node[vertex] (t) at (0,-2) {$t$};
	\node at (-1.4, 0) {$\alpha_1$}; 
	\node at (1.4, -0.2) {$\alpha_2$}; 
	
	\node[vertex] (lef) at (-2.5,0.5) {};
	\node[vertex] (x) at (-2.5,-0.5) {$x$};
	
	\node[vertex] (y2) at (2.5,-0.5) {};
	\draw (s) -- (x)[spath/save=sx];
	\draw (s) -- (y2)[spath/save=sy2];
	\draw (lef) -- (t)[spath/save=left];
	\draw plot [smooth, tension=0.8] coordinates {(t.10) (2.5,0.5)  (0,2.6)  (-2.6, 1)  (x.110)}[spath/save=tx];

  \scoped[on background layer,
 spath/split at intersections={tx}{sy2},
  spath/remove components={tx}{2},
  spath/remove components={sy2}{2}] 
{\fill[white]
  (t) [spath/use={tx}]
  -- (spath cs:{tx} 0) [spath/use={sy2, weld}]
--(s.center)
--(-2,0)
--(t.center)
-- cycle;}

	\node at (0,0) {$\mathcal{N}$};

\end{tikzpicture}

(a) a nest in type A
\hspace{1.5 cm} 
(b) a nest in type B
\hspace{1.5 cm} 
(c) a nest in type B*

\caption{Two types of nests in $\Nest(G_{i+1})\setminus \Nest(G_i)$,
where a nest $\setn$ in type B* is 
a special one in type B whose boundary $\border(\setn)$ has two far half-edges incident with $x$}
\label{new-nest}
\end{figure}

A nest $\setn\in \Nest_B^*$
is referred to as 
type B* if $\border(\setn)$ 
has two far half-edges incident with $x$,
as shown in Figure~\ref{new-nest} (c). 
Let $\Nest^*_{B^*}$ be the set of 
new nests in type B*. 

\begin{claim}\label{AB-typed}
	$p\ge 2$, 
	$|\Nest^*_A|\le \ell$ and 
	$|\Nest^*_B|\le 2\ell-|\Nest^*_{B^*}|$.
	In particular, 
	$|\Nest^*_A|\le \ell-1$ 
	if $1\le \ell \le 2$.
\end{claim}

\begin{proof}
	It follows from Lemma~\ref{clean-e} that $p \geq 2$. 
	
	For any 
	$\setn\in \Nest^*_A$, 
	$\border(\setn)$ occupies exactly two consecutive near half-edges incident with $x$, as illustrated in  Figure~\ref{new-nest} (a).
	Meanwhile, a near half-edge incident with $x$ belongs to the boundaries of at most two distinct $A$-typed nests. Hence, $|\Nest^*_A|\le \ell$. 
	Since each nest in $\Nest^*_A$ 
	corresponds to two crossings, 
	$|\Nest^*_A|=0$
	when $\ell=1$. 
	If $\ell=2$, then $|\Nest^*_A|\le 1$, 
	 otherwise $G_{i+1}$ would have multiple edges.
	
	For any $\setn\in \Nest^*_B$, 
	  $\border(\setn)$ contains at least one of the far half-edges incident with $x$. 
	  Meanwhile, any far half-edge incident with $x$  belongs to the boundaries of at most two distinct  nests in $\Nest^*_B$.  
	  Therefore,  $|\Nest^*_B|\le 2\ell-|\Nest^*_{B^*}|$.
	  Claim~\ref{AB-typed} holds.
\end{proof}

\begin{claim}\label{AB-ok}
The inequality of (\ref{le-st-e2})
holds if  either $p\ge 3$ or $|\Nest^*_A|+|\Nest^*_B|\le 3\ell-1$.
\end{claim} 

\begin{proof}
Note that $p\geq 2$ by Claim~\ref{AB-typed}. 
Thus, (\ref{le-st-e2}) holds if $|\Nest^*_A|+|\Nest^*_B|\le 3\ell-1$. 
Then, this claim follows directly 
from Claim~\ref{AB-typed}. 	
\end{proof}

\begin{claim}\label{l51-cl0}
	If $p=2$ and $\ell\le 2$, then 
	the inequality of (\ref{le-st-e2}) 
	holds.
\end{claim} 

\begin{proof}
	Assume that $p=2$ and $\ell \le 2$.
	By Claims~\ref{AB-typed}, 
	$|\Nest^*_A|\le \ell-1$
	and $|\Nest^*_B|\le 2\ell$.
	Then, the claim follows directly. 
\end{proof} 

By Claims~\ref{AB-typed},~\ref{AB-ok}
and~\ref{l51-cl0}, 
it remains to consider the case 
$p=2$, $\ell\ge 3$, 
$|\Nest^*_A|=\ell$ and $|\Nest^*_B|=2\ell$.
	Let $\partial_{G_{i+1}}(x)=
	\{h_1,h_2\}\cup \{e_j: j\in \brk{\ell}\}$,
	where $h_1$ and $h_2$ are the only
	edges in  $\partial_{G_{i+1}}(x)$ 
	which are clean in $G_{i+1}$.

\begin{claim}\label{claim5.2}
	There exists a face ${\cal R}$ in $G_i$ that is bounded by 
	an $\ell$-cycle
	$C=v_{1}v_2\cdots v_{\ell}v_1$,
	consisting of clean edges in $G_i$,  such that $x$ lies in the interior of ${\cal R}$ and 
	for each $j\in \brk{\ell}$,
	some edge in 
 $\partial_{G_{i+1}}(x)$, say $e_j$, 
crosses $f_j$ , where $f_1:=v_{\ell}v_1$
	 and $f_j:=v_{j-1}v_j$,
	 as shown in Figure~\ref{strong-aa}. 
\end{claim}

\begin{figure}[!h]
	\centering
	\begin{tikzpicture}[scale=0.8]
	
		\foreach \i/\label in {22.5/3, 67.5/2, 112.5/1, 155.5/l, -67.5/5, 337.5/4} {
			\node[scale =0.8, draw, circle, fill=white, inner sep=1pt] (v_{\label}) at ($(0,0) + (\i:3)$) {\small $v_{\label}$};
		}
		\node[scale =0.5, draw, circle, fill=white, inner sep=1pt] (v_{l-1}) at ($(0,0) + (202.5:3)$) {\small $v_{l-1}$};
		\node[scale =0.5, draw, circle, fill=white, inner sep=1pt] (v_{l-2}) at ($(0,0) + (-112.5:3)$) {\small $v_{l-2}$};
		\draw[line width=1.5pt] (v_{5}) -- (v_{4}) -- (v_{3}) -- (v_{2}) -- (v_{1}) -- (v_{l}) -- (v_{l-1}) -- (v_{l-2});
		\draw[decorate, decoration={
			markings,
			mark=between positions 0.2 and 1 step 12pt with {\node[circle, fill=black, minimum size=0pt, inner sep=0.6pt]{};}
		}] (v_{5}) -- (v_{l-2});
		
		\foreach \x in {-45,0,45,...,225}{
			\draw[-, blue] (0,0) -- (\x:3.5);
		}
	
		\node[draw, circle, minimum size=2mm, fill=blue!30] (x) at (0,0) {$x$};
		
		\foreach \i/\label in {35/3, 80/2, 125/1, 170/l, 305/5, 350/4} {
			\node[scale=0.7] () at ($(0,0) + (\i:3.2)$) {$f_{\label}$};
			\node[scale=0.6] () at ($(0,0) + (\i:1.75)$) {$e_{\label}$};
		}
		\node[scale=0.6] () at ($(0,0) + (240:1.75)$) {$e_{l-1}$};
		\foreach \i/\label in {22.5/3, 67.5/2, 112.5/1, 155.5/l, 202.5/{l-1}, 337.5/4} {
			\node[scale=0.7] () at ($(0,0) + (\i:2.25)$) {$\setn_{\label}$};
		}
		
		\foreach \i in {-0.8, 0, 0.8} {
			\fill (\i, -2) circle (1.0pt);
		}
	\end{tikzpicture}

	\caption{A local structural description for Claim~\ref{claim5.2}, where $h_1$ and $h_2$ are omitted}
	\label{strong-aa}
\end{figure}
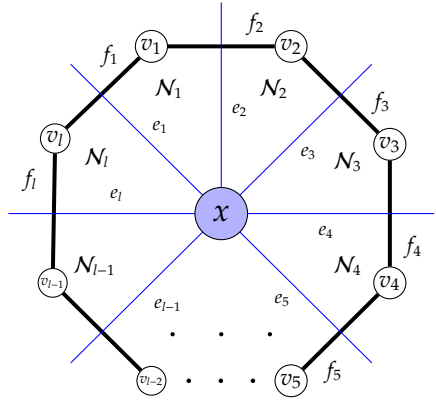

\begin{proof}
Without loss of generality, assume that
edges $e_1, e_2,\dots,e_{\ell}$ 
are in clockwise cyclic order.
Note that $|\Nest^*_A|=\ell$ and 
each nest in $\Nest^*_A$ contains 
two near half-edges incident to $x$.
Thus,  for each $k\in \brk{\ell}$,
there is a nest $\setn_k\in \Nest^*_A$
containing  the corner $\langle e_k x e_{k+1}\rangle$ (subscripts modulo $\ell$).
Note that $x$ is a supporting vertex of 
each $\setn_k$.
Let $v_k$ be the other supporting vertices of $\setn_k$.
Together with the property that 
$\setn_{k-1}$ and $\setn_k$ shares a near half-edge of $e_k$ in $\partial_{G_{i+1}}(x)$, 
we conclude that $v_{k-1}$ and $v_k$ 
are adjacent in $G_i$. 
Let $f_k:=v_{k-1}v_k$. 
Clearly, $f_k$ crosses $e_k$,
as shown in Figure~\ref{strong-aa}.

By the definition of nests, $\bigcup\limits_{k=1}^{\ell}\setn_{k}$ forms a face of $G_{i}$ bounded by $C$: $v_1v_2\cdots v_{\ell}v_1$ 
(see Figure  \ref{strong-aa}),
where each edge $v_{k-1}v_k$ 
is clean in $G_i$.
Thus,  Claim~\ref{claim5.2} holds.
\end{proof}

\begin{claim}\label{claim5.3}
	There are exactly $\ell-2$ edges in 
	$\{e_i: i\in \brk{\ell}\}$ 
	each of which 
	is incident with some vertex on $C$. 
\end{claim} 

\begin{proof}
By Claim~\ref{claim5.2}, for each $k\in \brk{\ell}$, 
$e_k$ crosses $v_{k-1}v_k$,
implying that 
$\{v_{k-1}, v_k\}\subseteq N_{G_{i+1}}(x)$ by Lemma~\ref{K_4}.
Hence $\{v_j:  j \in \brk{\ell}\}\subseteq  N_{G_{i+1}}(x)$. 

Note that $h_1$ and $h_2$ are the only edges in $\partial_{G_{i+1}}(x)$ which are clean in $G_{i+1}$.
Then,  for each $r\in [2]$, 
$h_r$  must be an edge 
within ${\cal R}$ 
which joins $x$ to some vertex  in 
$\{v_j: j\in \brk{\ell}\}$. 

For any $k\in \brk{\ell}\setminus  
\{s_1,s_2\}$, as $v_k\in N_{G_{i+1}}(x)$
and 
$\partial_{G_{i+1}}(x)=
\{h_1,h_2\} \cup \{e_j: j\in \brk{\ell}\}$,
 some edge 
in $\{e_j: j\in \brk{\ell}\}$
must be incident with $v_k$. 
Thus, the claim holds.
\end{proof}

\begin{claim}\label{claim5.4}
For any $k\in \brk{\ell}$, 
if $e_k$ is incident with some vertex in
$\{v_j: j\in \brk{\ell} \}$, then $f_k$ is non-removable in $G_i$. 
\end{claim}

\begin{proof}
Suppose that $e_1$ is incident with some vertex $v_j$. 
Since $e_1$ is not clean in $G_{i+1}$, 
$j\notin \{1,\ell\}$, 
as shown in Figure~\ref{cl5.4-p1}. 
Then, every path in $G_i-f_1$
connecting $v_1$ and $v_{\ell}$
contains vertex $v_j$,
implying that 
$v_j$ is a cut-vertex of $G_i-f_1$.
Thus, $f_1$ is non-removable
in $G_i$.
\end{proof}

\begin{figure}[!h]
	\centering
	\begin{tikzpicture}[scale=0.8]
		
		\foreach \i/\label in {22.5/3, 67.5/2, 112.5/1, 155.5/l, -67.5/5, 337.5/4} {
			\node[scale =0.8, draw, circle, fill=white, inner sep=1pt] (v_{\label}) at ($(0,0) + (\i:3)$) {\small $v_{\label}$};
		}
		\node[scale =0.5, draw, circle, fill=white, inner sep=1pt] (v_{l-1}) at ($(0,0) + (202.5:3)$) {\small $v_{l-1}$};
		\node[scale =0.5, draw, circle, fill=white, inner sep=1pt] (v_{l-2}) at ($(0,0) + (-112.5:3)$) {\small $v_{l-2}$};
		\draw[line width=1.5pt] (v_{5}) -- (v_{4}) -- (v_{3}) -- (v_{2}) -- (v_{1}) -- (v_{l}) -- (v_{l-1}) -- (v_{l-2});
		\draw[decorate, decoration={
			markings,
			mark=between positions 0.2 and 1 step 12pt with {\node[circle, fill=black, minimum size=0pt, inner sep=0.6pt]{};}
		}] (v_{5}) -- (v_{l-2});
		
		\foreach \x in {-45,0,45,...,90}{
			\draw[-, blue] (0,0) -- (\x:3.5);
		}
	
		\foreach \x in {180,225}{
		\draw[-, blue] (0,0) -- (\x:3.5);
	}

		\node[draw, circle, minimum size=2mm, fill=blue!30] (x) at (0,0) {$x$};
		
		\foreach \i/\label in {35/3, 80/2, 125/1, 170/l, 215/{l-1}, 305/5, 350/4} {
			\node[scale=0.7] () at ($(0,0) + (\i:3.2)$) {$f_{\label}$};
			\node[scale=0.6] () at ($(0,0) + (\i:1.75)$) {$e_{\label}$};
		}
		
		\foreach \i/\label in {22.5/3, 67.5/2, 112.5/1, 155.5/l, 202.5/{l-1}, 337.5/4} {
			\node[scale=0.7] () at ($(0,0) + (\i:2.25)$) {$\setn_{\label}$};
		}
		
		\foreach \i in {-0.8, 0, 0.8} {
			\fill (\i, -2) circle (1.0pt);
		}
	
\draw[blue] plot [smooth, tension=1.2] coordinates {(x.150)  (-2.2,3.1) (2.5, 3.1) (3.6, 0.0)   (v_{4}.15)};
	\end{tikzpicture}

\caption{$e_1$ is incident with $v_4$
}
	\label{cl5.4-p1}
\end{figure}
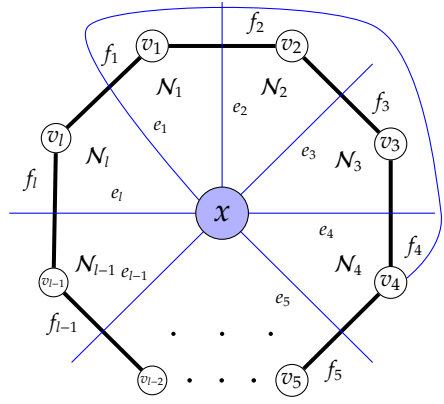

\begin{claim}\label{claim5.5}
If $p=2$,   then $\ell\le 3$. 
\end{claim} 

\begin{proof}
Suppose that $\ell\ge 4$.
	By Claim~\ref{claim5.3}, there are at least two distinct edges 
	$e_{s_1}$ and $e_{s_2}$, where 
	$1\le s_1<s_2\le \ell$, 
	each of which is incident with some 
	 vertex on $C$.
By Claim~\ref{claim5.4}, 
$f_{s_1}$ and $f_{s_2}$  are non-removable in $G_i$.

Note that $e_{s_j}\in \partial_{G_{i+1}}(x)$ 
crosses $f_{s_j}$ for $j=1,2$.
However, by Proposition~\ref{cut-bad} (1), this situation cannot happen, 
a contradiction.  
Hence this claim holds.
\end{proof}


\begin{figure}[!h]
	\centering
	\begin{tikzpicture}[bezier bounding box]
		\tikzset{vertex/.style={circle,draw=black,fill=white,inner sep=2pt}, vertexb/.style={circle,draw=black,fill=blue!30,inner sep=2pt},
			blackedge/.style={line width=1.5pt},  blueedge/.style={blue, line width=1.5pt}, blueedge1/.style={blue}
		}

		\node[vertex] (v1)  at  (-2, 0) {$v_1$};
		\node[vertex] (v2)  at  (2, 0) {$v_2$};
		\node[vertex] (v3)  at  (0, 3) {$v_3$};
		\node[vertexb](x)   at  (0,1.2) {$x$};
		\node[vertex](z1)  at  (-1.8,1.8) {$z_1$};
		\node[vertex](z2)  at  (1.8,1.8) 
		{$z_2$};
		
		\node[scale=1] ()  at (-0.2, 0.5) 
		{$e_2$};
		\node[scale=1] ()  at (0.5, -0.3) 
		{$f_2$};
		
		\node[scale=1] ()  at (0.7, 1.2) 
		{$e_3$};
		\node[scale=1] ()  at (2,  0.9) 
		{$f_3$};
		
		\node[scale=1] ()  at (-0.7, 1.2) 
		{$e_1$};
		\node[scale=1] ()  at (-2,  0.9) 
		{$f_1$};
		
		\draw[blackedge] (v1) -- (v2)--(v3)--(v1); 
		\draw[blueedge1] (x) -- (z1); 
		\draw[blueedge1] (x) -- (z2); 
		\draw[red!40] (x) -- (v1); 
		\draw[red!40] (x) -- (v2); 
		
		
		
		\draw[blueedge1] plot [smooth,tension=1] coordinates {(x.-80)  (0,0)  (-2.4,-0.3)  (-2.2,2)  
			(v3.200)};
		
		
	\end{tikzpicture}
	
	\caption{$\ell=3$}
	\label{strong-f}
\end{figure}

\begin{claim}\label{le51-cl2}
If $p=2$, then $\ell\ne 3$.
\end{claim}

\begin{proof}
Suppose that $p=2$ and $\ell=3$. 
By Claim~\ref{claim5.3},  exactly one 
edge in $\{e_1,e_2,e_3\}$ 
has an end-vertex on $C$. Assume that $e_2$ 
is the edge. 
Clearly, $e_2$ is incident with $v_3$
and crosses edge $v_1v_2$. 
Then, each edge in $\{e_1,e_3\}$ 
joins $x$ to a vertex 
which is not on $C$,
as shown in Figure~\ref{strong-f}. 

By Claim~\ref{claim5.4}, $f_2=v_1v_2$ 
is non-removable in $G_i$.
It is clear that $\{v_1,v_3\}$ 
is a cut-set of $G_i$, 
as $z_1$ and $z_2$ are in different components of the subgraph $G_i-\{v_1,v_3\}$. 
Similarly, $\{v_2,v_3\}$ 
is also a cut-set of $G_i$.
However, by Proposition~\ref{cut-bad} (2), this situation cannot happen,
a contradiction.  The claim holds.
\end{proof}

Inequality (\ref{le-st-e2})  follows from
By Claims~\ref{AB-ok}, ~\ref{l51-cl0},
~\ref{claim5.5} and~\ref{le51-cl2}.
Lemma~\ref{strong} holds.
\end{proof} 


\begin{lemma}
	\label{weak}
For any $i\in\brk{0,N-1}$, 
if $F_{i+1}$ is a weak $K_4$-link from $G_i$, then 
\equ{le5.2-e1}
{
 e(G_{i+1})-e(G_i)
 \geq \frac{14}{3}
 +\frac{1}{3}|\Nest_{i+1}^*|,
}
where 
the equality holds if and only if 
	$e(G_{i+1})-e(G_i)=5$ and 
	$|\Nest_{i+1}^*|=1$.
\end{lemma}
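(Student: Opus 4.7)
The plan is to parallel the proof of Lemma~\ref{strong}, now with two new vertices $x, y \in V(F_{i+1}) \setminus V(G_i)$ (and the two old ones being $u, z \in V(G_i)$). First I would observe that the new edges are exactly those incident with $x$ or $y$, whence $\Delta e := e(G_{i+1}) - e(G_i) = d_x + d_y - 1$ where $d_x = d_{G_{i+1}}(x)$ and $d_y = d_{G_{i+1}}(y)$; since $d_x, d_y \geq 3$ (both belong to the $K_4$ $F_{i+1}$), we get $\Delta e \geq 5$, and Lemma~\ref{clean-e} yields clean-edge counts $p_x, p_y \geq 2$ at $x, y$, so the crossing-edge counts satisfy $\ell_x \leq d_x - 2$ and $\ell_y \leq d_y - 2$.

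Next, mirroring the classification in Lemma~\ref{strong}, I would sort each new nest $\setn \in \Nest_{i+1}^*$ into one of four possibly overlapping types: Type $X_s$ (resp.~$Y_s$) if $x$ (resp.~$y$) is a supporting vertex of $\setn$, and Type $X_f$ (resp.~$Y_f$) if $x$ (resp.~$y$) is not a supporting vertex but a far half-edge incident with $x$ (resp.~$y$) appears on $\border(\setn)$. Since any new nest must contain a new crossing on its boundary, and every new crossing involves at least one new edge (incident to $x$ or $y$), the adjacency rule that the two boundary half-edges at a crossing bound a single corner forces every new nest into at least one of these four types. A corner-counting argument analogous to Claim~\ref{AB-typed} then gives $|X_s| \leq \ell_x$, $|Y_s| \leq \ell_y$, $|X_f| \leq 2\ell_x$, and $|Y_f| \leq 2\ell_y$, so $|\Nest_{i+1}^*| \leq 3(\ell_x + \ell_y) \leq 3(d_x + d_y) - 12 = 3\Delta e - 9$.

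The proof then splits on whether $(d_x, d_y) = (3, 3)$ (so $\Delta e = 5$) or $d_x + d_y \geq 7$ (so $\Delta e \geq 6$). In the case $(d_x, d_y) = (3, 3)$, Lemma~\ref{K_4} forces every new crossing to lie inside $F_{i+1}$, because any edge crossing a new $xv$ must have both endpoints in $N_{G_{i+1}}(x) \setminus \{v\} = \{y, u, z\} \setminus \{v\}$; hence at most one of the three candidate crossings $xy \times uz$, $xu \times yz$, $xz \times yu$ actually occurs. A direct enumeration of the four corners at the sole new crossing (using the fact that the two boundary half-edges of any nest at a crossing must be adjacent in its cyclic order) shows that a new nest can only form at the $\{u, z\}$-corner of $xu \times yz$ or $xz \times yu$, and only when $G_i$ already supplies a compatible crossing with half-edges at $u$ and $z$; thus $|\Nest_{i+1}^*| \leq 1$, with equality achievable, yielding $\Delta e = 5 \geq 14/3 + |\Nest_{i+1}^*|/3$ and equality iff $|\Nest_{i+1}^*| = 1$.

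The main obstacle lies in the remaining case $d_x + d_y \geq 7$, where the naive bound $3\Delta e - 9$ still falls short of the target $3\Delta e - 14$ by $5$. The missing savings will be drawn out by observing that every crossing involving both an $x$-edge and a $y$-edge (either $xy$ crossing an old edge, or $xv$ crossing $yw$ for some $v, w \in V(G_i)$) produces nests whose boundary half-edges simultaneously earn them an $X$-type and a $Y$-type label, so each such nest is double-counted in $|X_s| + |X_f| + |Y_s| + |Y_f|$. Combined with the degree bounds $\ell_x \leq d_x - 2$ and $\ell_y \leq d_y - 2$, together with a structural argument in the spirit of Proposition~\ref{cut-bad} to prevent extremal configurations from all coexisting, this will close the gap of $5$ and deliver strict inequality whenever $d_x + d_y \geq 7$.
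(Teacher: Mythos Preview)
Your approach has a genuine gap in the case $d_x + d_y \geq 7$. The counting bound $|\Nest_{i+1}^*| \leq 3(\ell_x + \ell_y) \leq 3\Delta e - 9$ is too weak, and your proposed remedies do not close it. The double-counting idea only helps for nests whose boundary involves \emph{both} an $x$-edge and a $y$-edge; but nothing in your setup rules out nests of type $X_f$ arising from a crossing between an edge of $E_x := \partial_{G_{i+1}}(x)\setminus\{xy\}$ and an edge of $G_i$, and such nests carry no $Y$-label at all. Likewise, Proposition~\ref{cut-bad} is stated and proved only for \emph{strong} $K_4$-links, and its hypotheses (about non-removable edges of $G_i$ being crossed by edges from the single new vertex) have no direct analogue here; invoking it ``in spirit'' is not a proof.

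What you are missing is the structural consequence of the SWM*-rule that drives the paper's argument: since $F_{i+1}$ is a \emph{weak} link, there is no strong $K_4$-link from $G_i$, and therefore no edge in $E_x \cup E_y$ can cross any edge of $E(G_i)$ (otherwise Lemma~\ref{K_4} would produce one). This single observation kills all the nests your counting cannot control. With it, the paper shows that either $x$ and $y$ lie in different faces of $G_i$ (forcing $\Nest_{i+1}^* = \emptyset$), or they lie in the same face and then at most two crossings---both between an $E_x$-edge and an $E_y$-edge---can occur, giving $|\Nest_{i+1}^*| \leq 3$ while simultaneously forcing $|E_x|,|E_y| \geq 4$ (hence $\Delta e \geq 9$) via Lemma~\ref{K_4}. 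The inequality is then immediate and strict whenever $|\Nest_{i+1}^*| \geq 2$; combined with your correct treatment of $|\Nest_{i+1}^*|\le 1$, this finishes the proof. Your $(d_x,d_y)=(3,3)$ analysis implicitly rediscovers this no-crossing fact via the degree constraint, but you need to use it globally, and it comes from the SWM*-rule, not from corner counting or from Proposition~\ref{cut-bad}.
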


\begin{proof}
Let $V(F_{i+1})=\{x, y, v_1, v_2\}$, 
where $v_1, v_2\in V(G_i)$
and $x,y\in V(G_{i+1})\setminus V(G_i)$. 
Let $E_x=\partial_{G_{i+1}}(x)\setminus \{xy\}$ and 
$E_y=\partial_{G_{i+1}}(y)\setminus \{xy\}$.
Obviously, 
$|E_x|\ge 2$, $|E_y|\ge 2$
and $e(G_{i+1})-e(G_i)=|E_x|+|E_y|+1$. 
The following claim holds directly. 


\setcounter{claim}{0}

\begin{claim}\label{cl5.2.0}
(\ref{le5.2-e1}) is equivalent to $|\Nest_{i+1}^*|\le 
3(|E_x|+|E_y|-4)+1$,
and 
the conclusion holds  when 
$|\Nest_{i+1}^*|\le 1$.

\end{claim}


\begin{claim}\label{cl5.2.1}
	$G$ has no crossings between any edge in $E_x\cup E_y$ and any
	edge in $E(G_{i})$.
\end{claim}

\begin{proof}
	Suppose, to the contrary, that 
	some edge in $E_x\cup E_y$, say $xu$,  crosses
	an edge $u'v'$ of $G_{i}$ in $G_{i+1}$. By Lemma~\ref{K_4}, 
	$F':=G[\{x, u, u', v'\}]\cong K_4$.
	Thus,  $F'$ is a  strong $K_4$-link from $G_{i}$, contradicting the assumption 
	that $F_{i+1}$ is a weak $K_4$-link from $G_i$ chosen by the SWM*-rule. 
	Claim~\ref{cl5.2.1} holds.
\end{proof}

\begin{claim}\label{cl5.2.2}
If $x$ and $y$ are in the 
different faces ${\cal R}$ and ${\cal R}'$ of $G_i$,
then $\Nest_{i+1}^*=\emptyset$
and the inequality of (\ref{le5.2-e1}) 
is strict.
\end{claim}

\begin{proof}
By Claim~\ref{cl5.2.1}, 
all edges in $E_x$ lies in the interior of 
${\cal R}$ and all edges in $E_y$
in the interior of ${\cal R'}$,
as shown in Figure \ref{weak-bb}.
Since $xy\in E(G_{i+1})$, 
and $x$ and $y$ are in different faces of $G_i$, 
$xy$ must cross some edge $st$ of  $G_i$, 
which is on both $\border({\cal R})$ and  $\border({\cal R}')$.
By Claim~\ref{cl5.2.1}, 
no edge in $E_x\cup E_y$ 
crosses any edge in $E(G_i)$, 
implying that 
 $\Nest_{i+1}^*=\emptyset$.
Claim~\ref{cl5.2.2} follows from 
Claim~\ref{cl5.2.0}.

\end{proof}

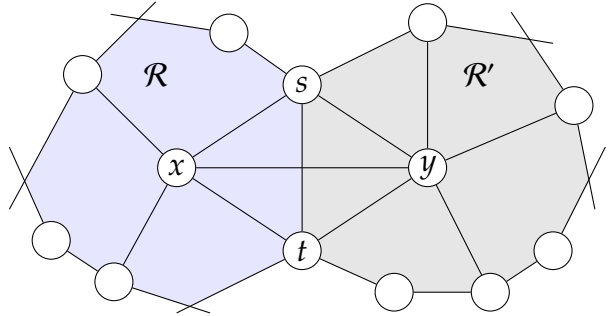
\begin{figure}[!ht]
	\centering
	
	\begin{tikzpicture}[scale=0.55,whitenode/.style={circle, draw=black, fill=white, minimum size=5mm, inner sep=1pt},
		pickshing/.style={draw=none, fill=gray!20}, shadow_blue/.style={fill=blue!10, draw=none}]
		\tikzset{edge/.style={}}
		
		\node [style=whitenode] (0) at (0, 2) {$s$};
		\node [style=whitenode] (1) at (0, -2) {$t$};
		\node [style=whitenode] (2) at (-1.75, 3.25) {};
		\node [style=whitenode] (3) at (-5.25, 2.25) {};
		\node [style=whitenode] (4) at (-6, -1.75) {};
		\node [style=whitenode] (5) at (-4.5, -2.75) {};
		\node [style=none] (6) at (-3, -3.5) {};
		\node [style=none] (8) at (-7, 0.5) {};
		\node [style=none] (9) at (-7, -1) {};
		\node [style=none] (10) at (-4.575, 3.7) {};
		\node [style=none] (11) at (-3.5, 4) {};
		\node [style=none] (12) at (-2.2, -3.5) {};
		\node [style=whitenode] (13) at (-3, 0) {$x$};
		\node [style=whitenode] (14) at (3, 0) {$y$};
		\node [style=whitenode] (15) at (3, 3.5) {};
		\node [style=none] (16) at (5, 3.75) {};
		\node [style=none] (17) at (6, 3) {};
		\node [style=whitenode] (18) at (6.5, 1.5) {};
		\node [style=none] (19) at (7.25, 0.5) {};
		\node [style=none] (20) at (7, -1) {};
		\node [style=whitenode] (21) at (2.2, -3) {};
		\node [style=whitenode] (22) at (4.5, -3) {};
		\node [style=whitenode] (23) at (6, -2) {};
		\node  (24) at (-3.5, 2.25) {${\cal R}$};
		\node(25) at (4.25, 2.25) {${\cal R}'$};
		\node [style=none] (26) at (-3.925, 3.6) {};
		\node  (27) at (-2.7, -3.35) {};
		\node [style=none] (28) at (5.425, 3.1) {};
		\node [style=none] (29) at (6.875, -0.325) {};
		\node (30) at (-6.625, -0.325) {};
		\draw[edge] (0) to (1);
		\draw[edge] (1) to (6.center);
		\draw[edge] (8.center) to (4);
		\draw[edge] (4) to (5);
		\draw[edge] (3) to (9.center);
		\draw[edge] (3) to (11.center);
		\draw[edge] (10.center) to (2);
		\draw[edge] (2) to (0);
		\draw[edge] (13) to (14);
			
		\scoped[on background layer]{
			\draw[style={pickshing}] (0.center) to (1.center) to (21.center) to (22.center)  to (23.center) to (29.center) to (18.center)  to (28.center)   to (15.center)   to cycle;                             \draw [style={shadow_blue}] (1.center)			 to (27.center)			 to (5.center)			 to (4.center)			 to (30.center)			 to (3.center)			 to (26.center)			 to (2.center)			 to (0.center)			 to cycle
			; }

		\draw[edge] (3) to (13);
		\draw[edge] (13) to (5);
		\draw[edge] (13) to (1);
		\draw[edge] (13) to (0);
		\draw[edge] (1) to (21);
		\draw[edge] (21) to (22);
		\draw[edge] (22) to (23);
		\draw[edge] (23) to (19.center);
		\draw[edge] (20.center) to (18);
		\draw[edge] (18) to (16.center);
		\draw[edge] (17.center) to (15);
		\draw[edge] (15) to (0);
		\draw[edge] (14) to (0);
		\draw[edge] (14) to (1);
		\draw[edge] (14) to (22);
		\draw[edge] (14) to (15);
		\draw[edge] (18) to (14);
		\draw[edge] (5) to (12.center);
		
	\end{tikzpicture}

	\caption{The case that  $x$ and $y$ lie in  distinct  faces  of $G_i$}
	\label{weak-bb}
\end{figure}

By Claim~\ref{cl5.2.0}, 
it suffices to consider  the case that $|\Nest_{i+1}^*|\ge 2$. 
Then, by Claim~\ref{cl5.2.2}, we may assume that 
both $x$ and $y$ lie in a
face ${\cal R}$ of $G_{i}$.

\begin{claim}\label{cl5.2.3}
If $|\Nest_{i+1}^*|\ge 2$ and 
$x$ and $y$ lie in the some
face ${\cal R}$ of $G_{i}$,  
then $|\Nest_{i+1}^*|\le 3$
and $|E_x|+|E_y|\ge 8$,
and hence the inequality of (\ref{le5.2-e1}) 
is strict.
\end{claim} 

\begin{proof}
If $xy$ crosses some edge on $\border({\cal R})$, then 
$xy$ must cross  either one  edge at least twice or at least two edges on 
$\border({\cal R})$, a contradiction. 
Thus, $xy$ does not cross any edge 
in $E(G_{i})$. 
By Claim~\ref{cl5.2.1}, all edges in $E_x\cup E_y\cup \{xy\}$ must lie in the interior of ${\cal R}$, with $v_1$ and $v_2$ on $\border({\cal R})$. 
It is clear that $xy$ does not cross any edge in $E_x\cup E_y$. 

Since $|\Nest_{i+1}^*|\ge 2$,
there must be crossings involving edges in $E_x\cup E_y$.
By Claim~\ref{cl5.2.1} and the fact
that $xy$ does not cross any edge in $E_x\cup E_y$, 
 such crossings must be between edges in  $E_x$ and 
edges in $E_y$.
If there is only one such crossing
$\alpha_1$ between 
edges $xa$ and $ya'$, 
then 
$\Nest^*_{i+1}$ has at most one 
nest ${\cal  N}= \langle \{a, a'\}, \{\alpha_1, \beta_1 \}\rangle$, where $\beta_1$ represents a crossing between two edges of $G_{i}$ incident with $a$ and $a'$,  contradicting the assumption. 
Thus, there are at least two crossings 
between edges in $E_x$ and edges in $E_y$.

Let $H$ denote the 1-plane graph 
$G_{i+1}-y$
inheriting the drawing of $G_{i+1}$. 
Then, ${\cal R}$ is divided into some
sub-faces. 
Assume that $y$ is within 
	a sub-face $R'$ of $R$, as shown in Figure~\ref{weak-aa}.
We may assume that ${\cal R}'$ is bounded by $e=xa, f=xb$ 
in $E_x$ and a segment $L$ of $\border({\cal R})$.
Clearly, $e$ and $f$ are the only edges 
in $E_x$ which can be crossed 
by edges in $E_y$.
Since there are at least two crossings 
between edges in $E_x$ and edges in $E_y$, 
there must be exactly two such 
crossings 
$\alpha_1$  and $\alpha_2$, where
$\alpha_1$ is the crossing between  $e=xa$ and  $e'=ya'$, 
and 
$\alpha_2$  is the crossing 
between $f=xb$ and  $f'=yb'$, 
where  $a'$ and $b'$ are on $\border({\cal R})$,
as illustrated in Figure \ref{weak-aa}.

\begin{figure}[!h]
	\centering

	\begin{tikzpicture}[scale=1.0,
		whitenode/.style={circle, draw=black, fill=white, minimum size=5mm, inner sep=1pt},
		pickshing/.style={draw=none, fill=blue!10}, shadow_blue/.style={fill=gray!20, draw=none}]
		\tikzset{t_edge/.style={}}
		\node [style=whitenode] (1) at (0.5635, -1.919) {};
		\node [style=whitenode] (2) at (1.5115, -1.3097) {$b'$};
		\node [style=whitenode] (4) at (1.8193, 0.8308) {$b$};
		\node [style=whitenode] (5) at (1.0813, 1.6825) {};
		\node [style=whitenode] (7) at (-1.0813, 1.6825) {$a$};
		\node [style=whitenode] (8) at (-1.8193, 0.8308) {};
		\node [style=whitenode] (9) at (-1.9796, -0.2846) {$a'$};
		\node [style=whitenode] (11) at (-0.5635, -1.919) {};
		\node [style=whitenode] (12) at (0.25, 0.75) {$y$};
		\node [style=whitenode] (13) at (-0.25, -0.5) {$x$};
		\node [style=none] (14) at (-2.25, -1.25) {};
		\node [style=none] (15) at (-1.5, -2) {};
		\node [style=none] (16) at (2.5, 0.5) {};
		\node [style=none] (17) at (2.25, -0.75) {};
		\node [style=none] (18) at (0, 2.25) {};
		\node [style=none] (19) at (-0.5, 2.5) {};
		\node [style=none] (20) at (0.5, 2.5) {};
		\node [style=none] (21) at (1.175, 0) {$\alpha_2$};
		\node [style=none] (22) at (1.55, 0.375) {$f$};
		\node [style=none] (23) at (0.5, -0.475) {$f'$};
		\node [style=none] (24) at (-1.35, 0.25) {$e'$};
		\node [style=none] (25) at (-0.9, 0.55) {$\alpha_1$};
		\node [style=none] (26) at (-1, 1) {$e$};
		\node [style=none] (27) at (-0.75, -1) {${\cal R}$};
		\node [style=none] (28) at (0.75, 2.25) {$L$};
		\node [style=none] (29) at (0.25, 1.5) {${\cal R}'$};
		\node [style=none] (30) at (-1.65, -1.475) {};
		\node [style=none] (31) at (2.1, -0.25) {};

		\draw[t_edge] (12) to (7);
		\draw[t_edge] (12) to (4);
		\draw[t_edge] (12) to (2);
		\draw[t_edge] (12) to (9);
		\draw[t_edge] (13) to (7);
		\draw[t_edge] (13) to (4);
		\draw[t_edge] (13) to (9);
		\draw[t_edge] (13) to (2);
		\draw[t_edge] (7) to (8);
		\draw[t_edge] (8) to (9);
		\draw[t_edge] (13) to (11);
		\draw[t_edge] (9) to (15.center);
		\draw[t_edge] (14.center) to (11);
		\draw[t_edge] (11) to (1);
		\draw[t_edge] (1) to (2);
		\draw[t_edge] (4) to (17.center);
		\draw[t_edge] (16.center) to (2);
		\draw[t_edge] (4) to (5);
		\draw[t_edge] (7) to (20.center);
		\draw[t_edge] (5) to (19.center);
		
		\draw[t_edge]  (7) to (-2.7, 0.7); 
		\draw[t_edge]  (9) to (-2.7, 0.9); 
		
	\node [style=none] () at (-2.7, 0.3) {$\beta_1$};
	\node [style=none] () at (2.5, -0.2) {$\beta_2$};

		\scoped[on background layer] {
			\draw [style={shadow_blue}] (4.center) to (13.center) to (7.center) to (18.center) to (5.center) to cycle; \draw [style=pickshing] (1.center) to (2.center) to (31.center) to (4.center) to (13.center) to (7.center) to (8.center) to (9.center) to (30.center) to (11.center) to cycle
			;}
	\end{tikzpicture}
	
	\caption{Both  $x$ and $y$ lie in a same face  of $G_i$}
	\label{weak-aa}
\end{figure}
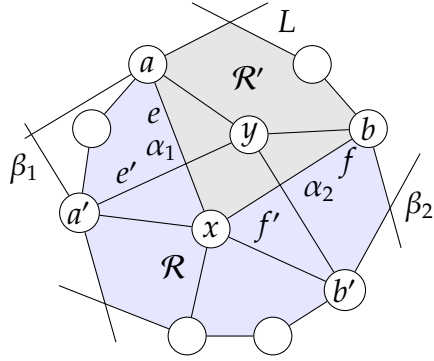
   
   Note that ${\cal  N}_0=\langle \{x, y\},  \{\alpha_1, \alpha_2 \}\rangle$ is a 
   nest in $\Nest^*_{i+1}$, 
    and 
   ${\cal  N}_1=\langle \{a, a'\}, \{\alpha_1, \beta_1\}\rangle$, 
   and ${\cal  N}_2=\langle \{b, b'\}, \{\alpha_2, \beta_2\}\rangle$
   are other possible nests in $\Nest^*_{i+1}$, 
      where $\beta_1$ is  a crossing between two edges of $G_{i}$ incident with $a$ and $a'$ respectively
      and $\beta_2$ is  a crossing between two edges of $G_{i}$ incident with $b$ and $b'$ respectively.
      Thus, $|\Nest_{i+1}^*|\le 3$. 
      
      Since $xa$ and $ya'$ cross each other, 
      $G[\{x,y,a,a'\}]\cong K_4$ 
      by Lemma~\ref{K_4}. 
      Similarly, $G[\{x,y,b,b'\}]\cong K_4$.
      Thus, $\{xa,xa',xb,xb'\}\subseteq E_x$
      and  $\{ya,ya',yb,yb'\}\subseteq E_y$.
      Now, it is known that $a\ne b$,  $a\ne a'$ and $b\ne b'$. 
      Since $xa$ and $xb$ are different 
      edges, and 
      $ya'$ crosses $xa$
      and $yb'$ crosses $xb$,
      we have $a'\ne b'$. 
      Obviously, $a\ne b'$, as
      $b'$ is not on $L$ or an end of $L$. 
      Similarly, $a'\ne b$. 
      Thus, $a,b,a'$ and $b'$ 
      are pairwise distinct vertices, 
      implying that 
      $|E_x|\ge 4$ and $|E_y|\ge 4$. 
      Claim~\ref{cl5.2.3} holds. 
\end{proof}

   By Claims~\ref{cl5.2.0}, 
   \ref{cl5.2.2} and \ref{cl5.2.3}, 
   inequality (\ref{le5.2-e1}) holds. 
   \end{proof}

\begin{lemma}
	\label{micro}
For any 
$i\in\brk{0,N-1}$, 
	if $F_{i+1}$ is a micro 
	$K_4$-link from $G_i$, 
then $\Nest_{i+1}^*=\emptyset$,
and 
\equ{micro-e}
{e(G_{i+1})-e(G_i)\geq 7,
}
where the equality holds if and only if 
$F_{i+1}$ is a simple micro $K_4$-link from $G_i$.
\end{lemma}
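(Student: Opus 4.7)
\emph{Proof proposal.} The plan is to prove the two assertions separately.

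\textbf{Edge count.} Since $F_{i+1}$ is a micro $K_4$-link from $G_i$, we have $|V(F_{i+1}) \cap V(G_i)| = 1$; let $u$ denote the shared vertex and $x, y, z$ the three new vertices of $F_{i+1}$. Every one of the six edges of $F_{i+1}$ is incident to at least one of $x, y, z$, so $E(F_{i+1}) \cap E(G_i) = \emptyset$ and $E(F_{i+1}) \subseteq E(G_{i+1}) \setminus E(G_i)$. Lemma~\ref{micro-aa} supplies at least one further edge in $E(G_{i+1}) \setminus (E(G_i) \cup E(F_{i+1}))$. Therefore $e(G_{i+1}) - e(G_i) \geq 6 + 1 = 7$, with equality precisely when the extra-edge set has cardinality exactly one, which is the definition of $F_{i+1}$ being simple.

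\textbf{No new nests.} The plan is to show every nest of $G_{i+1}$ is already a nest of $G_i$. I would suppose for contradiction $\setn = \langle\{s,t\},\{\alpha_1,\alpha_2\}\rangle \in \Nest_{i+1}^*$. The pivotal consequence of Rule M* being applied is that no strong or weak $K_4$-link from $G_i$ exists, so every $K_4$-subgraph $K$ of $G$ satisfies $|V(K) \cap V(G_i)| \in \{0, 1, 4\}$. Applying Lemma~\ref{K_4} at each crossing $\alpha_j$ yields a $K_4$-subgraph $K^{(j)}$ of $G$ on the four endpoints of the two crossing edges; since only three new vertices are available, $|V(K^{(j)}) \cap V(G_i)| \ne 0$, hence $|V(K^{(j)}) \cap V(G_i)| \in \{1, 4\}$. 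If both equal $4$, both crossings and all four supporting half-edges of $\setn$ lie in $G_i$; since the interior is empty in $G_{i+1} \supseteq G_i$ it is also empty in $G_i$, so $\setn \in \Nest(G_i)$, contradicting $\setn \in \Nest_{i+1}^*$. Otherwise at least one $K^{(j)}$, say $K^{(1)}$, has $|V(K^{(1)}) \cap V(G_i)| = 1$, forcing three of its four endpoints into $\{x, y, z\}$. A short case analysis on which of $s, t$ lie in $V(G_i)$, together with $1$-planarity (each edge crossed at most once, adjacent edges do not cross), reduces all possibilities to residual configurations in which $G[\{w, x, y, z\}] \cong K_4$ for some $w \in V(G_i)$.

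\textbf{Main obstacle.} The hardest step will be ruling out these residual configurations. The plan is to exploit Rule M*'s consecutivity requirement: the chosen $u$ has edges $ux$ and $uw'$ consecutive on $\border(\setr)$ for some face $\setr$ of $G$, where $w' \in N_G(u) \cap V(G_i)$. When $u$ is a supporting vertex of $\setn$, the cyclic order at $u$ places the two nest-bounding edges incident to $u$ (from $F_{i+1}$) bracketing a sector that contains $ux$; the consecutivity then forces $uw'$ into the same sector, which lies inside the nest region. But $uw'$ cannot cross any nest-boundary edge without producing a $K_4$-subgraph of $G$ containing both $u$ and $w'$ in $V(G_i)$---a weak $K_4$-link, forbidden by Rule M*'s applicability---nor can it cross any edge adjacent to it at $u$. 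Hence $uw'$ cannot exit the nest region to reach $w'$, a contradiction. When $u$ is not a supporting vertex, a parallel cyclic-order analysis centered at the $V(G_i)$-vertex supporting $\setn$, or at $u$ via the forced position of its edges to $\{x, y, z\}$ relative to the nest, yields the same kind of contradiction. Exhausting all cases will give $\Nest_{i+1}^* = \emptyset$, completing the proof.
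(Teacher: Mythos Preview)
Your edge-count paragraph is correct and matches the paper.

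Your reduction for the nest part is sound and in fact cleaner than the paper's opening: the observation that Rule~M*'s applicability forces every $K_4$-subgraph of $G$ to meet $V(G_i)$ in $0$, $1$, or $4$ vertices is exactly right, and it disposes of the case where both crossings of $\setn$ lie entirely in $G_i$. It also quickly kills the mixed case (one crossing old, one new): if $|V(K^{(2)})\cap V(G_i)|=4$ then $s,t\in V(G_i)$, but at $\alpha_1$ one of the supporting half-edges comes from an edge with both endpoints in $\{x,y,z\}$, forcing $s$ or $t$ into $\{x,y,z\}$, a contradiction.

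The gap is in your ``main obstacle'' paragraph, where both crossings satisfy $|V(K^{(j)})\cap V(G_i)|=1$, so $V(K^{(j)})=\{w_j,x,y,z\}$ with $w_j\in V(G_i)$. Your proposed use of the consecutivity clause in Rule~M* does not close this: nothing forces $u$ to be a supporting vertex of $\setn$ (you would need $u\in\{w_1,w_2\}$, which is not guaranteed), and when $u$ is not a supporting vertex the ``parallel cyclic-order analysis'' you gesture at has no clear anchor. The consecutivity of $ux$ with some $uw'$ in the full graph $G$ says nothing about how the two crossings $\alpha_1,\alpha_2$ sit relative to each other inside $G_{i+1}$.

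The paper's route is different and avoids this difficulty. It first shows (its Claim~1) that no new edge crosses any edge of $G_i$ and no two edges in $E_x\cup E_y\cup E_z$ cross each other---these follow immediately from your $|V(K)\cap V(G_i)|\in\{1,4\}$ observation. The decisive step (its Claim~3) is a \emph{geometric} bound: since no new edge crosses an old one, all of $x,y,z$ and their incident edges lie in a single face $\mathcal R$ of $G_i$; analysing the possible drawings of $F_{i+1}$ inside $\mathcal R$ shows there is at most \emph{one} crossing among the new edges in total. But if $\setn$ has a supporting vertex in $\{x,y,z\}$, both $\alpha_1$ and $\alpha_2$ involve edges incident to that vertex, hence both are new crossings---contradicting the bound. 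This replaces your consecutivity argument with a count of new crossings, and that count is what you are missing.
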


\begin{proof}
By the definition of  
micro $K_4$-link
and Lemma~\ref{micro-aa}, 
we have 
$e(G_{i+1})-e(G_i)\geq |E(F_{i+1})|+1=7$,
where the equality holds if and only if 
$F_{i+1}$ is a simple micro $K_4$-link from $G_i$.
In the following, we need only to show that 
$\Nest^*_{i+1}=\emptyset$. 

Let $V(F_{i+1})=\{x, y, z, u\}$, where $u\in V(G_i)$ and 
$x, y, z\in V(G_{i+1})\setminus V(G_i)$.
Let  
$E_x=\{xt\in E(G_{i+1}): t\in V(G_i)\}$, 
$E_y=\{yt\in E(G_{i+1}): t\in V(G_i)\}$
and 
$E_z=\{zt\in E(G_{i+1}): t\in V(G_i)\}$.
 
We are now going to prove the following claims. 
 
\setcounter{claim}{0}
\begin{claim}\label{cl-mi} 
	The following statements hold:
	
\vspace{-3 mm}
	
  \begin{itemize}[itemsep=-1mm]
 \item[(1)] no edge in $E_x\cup E_y\cup E_z\cup \{xy, yz, zx\}$ 
 crosses any edge in $E(G_{i})$;
 \item[(2)] any two edges in $E_x\cup E_y\cup E_z$ do not cross each other;
 and 
 \item[(3)] any two edges in $\{xy, yz, zx\}$ do not cross each other.
  \end{itemize}
\end{claim}

\begin{proof}  If (1) fails, 
	by Lemma~\ref{K_4}, 
	 there is either a strong  or  weak $K_4$-link from $G_i$, contradicting
	 the assumption  that $F_{i+1}$ is a micro $K_4$-link from $G_i$
	 chosen by the SWM*-rule. 
	 Thus, (1) holds.

Now prove (2). 
If edges $e_1$ and $e_2$ in 
$E_x\cup E_y\cup E_z$ cross each 
other, then they belongs to  
two distinct sets.
Then, by Lemma~\ref{K_4}, 
$G_{i+1}$ has a weak $K_4$-extension of $G_i$, a contradiction.
(3) is also clear, because any two edges in $\{xy, yz, zx\}$ share a common vertex.
\end{proof}

A crossing in $G_{i+1}$ is said to be new if it is not between two edges 
	in $G_i$.
By  (2) and (3) in Claim~\ref{cl-mi}, 
the following claim follows.

\begin{claim}\label{cl-mi-0} 
Any new crossing in $G_{i+1}$ 
must be between $xy$ and some edge in 
$E_z$, 
or $xz$ and some edge in $E_y$,
or $yz$ and some edge in $E_x$. 
\end{claim} 

\begin{claim}\label{cl-mi-1}
$G_{i+1}$ has at most one 
crossing between two edges in 
$\partial_{G_{i+1}}(x)
\cup \partial_{G_{i+1}}(y)
\cup \partial_{G_{i+1}}(z)$.
\end{claim} 

\begin{proof}
By (1) in Claim~\ref{cl-mi}, 
vertices $x$, $y$ and $z$, along with all the edges in 
$\partial_{G_{i+1}}(x)
\cup \partial_{G_{i+1}}(y)
\cup \partial_{G_{i+1}}(z)$
must lie within some face ${\cal R}$ of $G_{i}$. Obviously,  $u$ is on 
$\border({\cal R})$. 

Since $F_{i+1}\cong K_4$, 
there is at most one crossing 
between two 
edges in $F_{i+1}$, 
say $zu$ and $xy$, 
as shown in Figure~\ref{micro-f1} (b). 

\begin{figure}[!h]
	\begin{minipage}[b]{0.48\textwidth}
		\centering
		\begin{tikzpicture}
			\tikzset{vertex/.style={circle,draw=black,fill=white,inner sep=2pt},
				blackedge/.style={black,line width=1.5pt}, blueedge/.style={blue,line width=1.5pt}, blueedge_normal/.style={blue}
			}
			\fill[gray!20] (-2.3,-2.3) rectangle (2.5,2.5);
			\draw[fill=white, draw=black] (0,0) circle (2);
			\node (G)  at (1.8,1.8)  {$G_{i}$};
			
			\node[vertex]  (z)  at (0,1)  {$z$};
			
		\node (R)  at (1, -0.8)  {${\cal R}$};

			\def\radius{1}
			
			\pgfmathsetmacro{\ux}{0 + \radius*cos(90)} 
			\pgfmathsetmacro{\uy}{1 + \radius*sin(90)}
			\pgfmathsetmacro{\xx}{0 + \radius*cos(210)} 
			\pgfmathsetmacro{\xy}{1 + \radius*sin(210)}
			\pgfmathsetmacro{\yx}{0+ \radius*cos(-30)} 
			\pgfmathsetmacro{\yy}{1 + \radius*sin(-30)}
			
			\node[vertex] (u) at (\ux,\uy) {$u$}; 
			\node[vertex] (x) at (\xx,\xy) {$x$}; 
			\node[vertex] (y) at (\yx,\yy) {$y$}; 
			
			\node[vertex] (x1)  at (145:2)   {};
			\node[vertex]  (x2)  at (165:2)  {};
			\node[vertex]  (x3)  at (185:2)   {};
			\node[vertex] (y1)  at (30:2)   {};
			\node[vertex]  (y2)  at (10:2)  {};
			\node[vertex]  (y3)  at (-10:2)   {};
			\node[vertex]  (z1)  at (-90:2)   {$z'$};
			\node[]   at (166:1.7)   {$\vdots$};
			\draw[blueedge] (z) -- (u) -- (x) -- (z); 
			\draw[blueedge] (z) -- (y) -- (u); 
			\draw[blueedge] (x) -- (y); 
			\draw[orange, dashed] (z)--(z1); 
			\foreach \i in {1,2,3} {
				\draw[blueedge_normal] (x) -- (x\i);
				\draw[blueedge_normal] (y) -- (y\i);
			}
			\node[]  at (10:1.7)   {$\vdots$};
		\end{tikzpicture}
	\end{minipage}
	\hfill
	\begin{minipage}[b]{0.48\textwidth}
		\centering
		\begin{tikzpicture} 
			\tikzset{vertex/.style={circle,draw=black,fill=white,inner sep=2pt},
				blackedge/.style={black,line width=1.5pt}, blueedge/.style={blue,line width=1.5pt}, blueedge_normal/.style={blue}
			}
			\fill[gray!20] (-2.3,-2.3) rectangle (2.5,2.5);
			\draw[fill=white, draw=black] (0,0) circle (2);
			\node (G)  at (1.8,1.8)  {$G_{i}$};
			\node[vertex]  (z)  at (0,0)  {$z$};
			
			\node (R)  at (1, -0.3)  {${\cal R}$};
			
			\def\radius{0.5}
			
			\pgfmathsetmacro{\ux}{0 + \radius*cos(90)} 
			\pgfmathsetmacro{\uy}{1.5 + \radius*sin(90)}
			\pgfmathsetmacro{\xx}{-0.3 + \radius*cos(210)} 
			\pgfmathsetmacro{\xy}{1.2 + \radius*sin(210)}
			\pgfmathsetmacro{\yx}{0.3+ \radius*cos(-30)} 
			\pgfmathsetmacro{\yy}{1.2 + \radius*sin(-30)}
			
			\node[vertex] (u) at (\ux,\uy) {$u$}; 
			\node[vertex] (x) at (\xx,\xy) {$x$}; 
			\node[vertex] (y) at (\yx,\yy) {$y$}; 
			
			\node[vertex]  (x1)  at (150:2)   {};
			\node[vertex]  (x2)  at (170:2)  {};
			\node[vertex]  (x3)  at (190:2)   {};
			\node[vertex] (y1)  at (-10:2)   {};
			\node[vertex]  (y2)  at (10:2)  {};
			\node[vertex]  (y3)  at (30:2)   {};
			
			\node[vertex]  (z1)  at (-90:2)   {};
			\node[vertex]  (z2)  at (-120:2)  {};
			\node[vertex]  (z3)  at (-60:2)   {};
			\draw[blueedge] (z) -- (u) -- (x) -- (z); 
			\draw[blueedge] (z) -- (y) -- (u); 
			\draw[blueedge] (x) -- (y); 
			\foreach \i in {1,2,3} {
				\draw[blueedge_normal] (x) -- (x\i); 
				\draw[blueedge_normal] (y) -- (y\i);
				\draw[blueedge_normal] (z) -- (z\i);
			}
			\node[]   at (170:1.8)   {$\vdots$};
			\node[]   at (-75:1.8)   {$\dots$};
			\node[]   at (10:1.8)   {$\vdots$};
		\end{tikzpicture}
	\end{minipage}

\centerline{(a) \hspace{7.5 cm} (b)}

\caption{At most one 
crossing between 
edges in 
$\partial_{G_{i+1}}(x)
\cup \partial_{G_{i+1}}(y)
\cup \partial_{G_{i+1}}(z)$
} 
\label{micro-f1} 
\end{figure}
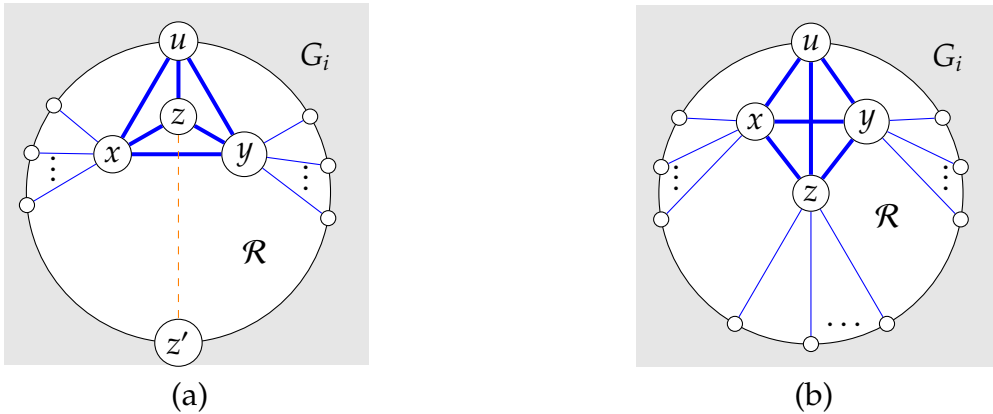

By Claim~\ref{cl-mi-0}, 
we need only to consider possible 
crossings between edges in 
$\{xy,xz,yz\}$ and edges 
in $E_x\cup E_y\cup E_z$.  

Obviously, in Figure~\ref{micro-f1} (b), 
no edge $zz'$ in $\partial_{G_{i+1}}(z)$ crosses $xy$,  where $z'\ne u$.
But, in Figure~\ref{micro-f1} (a), it is possible that 
some edge $zz'\in \partial_{G_{i+1}}(z)$ crosses $xy$, where $z'$ is on $\border({\cal R})$.
Also notice that in 
both (a) and (b) of Figure~\ref{micro-f1}, 
no edge  $xx'$ in $\partial_{G_{i+1}}(x)$ crosses $yz$, 
and no edge 
$yy'$ in $\partial_{G_{i+1}}(y)$ crosses $xz$, 
where $x'$ and $y'$ is on $\border({\cal R})$.
Hence Claim~\ref{cl-mi-1} holds.
\end{proof} 

Now we are going to complete the proof.  Suppose that 
	$\setn=\langle \{s,t\},  \{\alpha_1, \alpha_2 \}\rangle
	\in \Nest^*_{i+1}$, where 
	$s,t\in V(G_{i+1})$ and 
	$\alpha_1$ and $\alpha_2$ are crossing points. 
	If $\{s,t\}\cap \{x,y,z\}=\emptyset$, 
	then, either some edge in 
	$E_x\cup E_y\cup E_z$ crosses 
	edges in $G_{i}$, 
	or two edges in $E_x\cup E_y\cup E_z$
	cross each other,
	a contradiction to (1) and (2) of Claim~\ref{cl-mi}.
	
	Hence $\Nest^*_{i+1}=\emptyset$
	and the lemma holds.
	\end{proof}

\subsection{Proof of inequality~(\ref{e5-1})} 

In this subsection, we will prove 
inequality~(\ref{e5-1})  which is 
crucial 
for proving Theorem~\ref{main1.3},
the main result in this article.

\begin{proposition}\label{skeleton}
Let $G$ be a maximal $1$-plane graph 
in which each edge in $G$ is contained 
in some $K_4$-subgraph. 
If $n(G)\ge 5$,   then 
\equ{ske-e1}
{
e(G)\geq \frac{7}{3}n(G)+\frac{1}{3}\nest(G)-3. 
}
\end{proposition}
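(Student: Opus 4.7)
My plan is to prove~(\ref{ske-e1}) by telescoping along a $K_4$-extension sequence $\{G_i\}_{i\in\brk{0,N}}$ of $G$, which exists by Proposition~\ref{exten0}. The base is $G_0\cong K_4$ with $n(G_0)=4$, $e(G_0)=6$, $\nest(G_0)=0$, and the final term is $G_N=G$. Applying Lemmas~\ref{strong}, \ref{weak} and~\ref{micro} in turn yields the uniform per-step inequality
\[
e(G_{i+1})-e(G_i)\;\ge\;\tfrac{7}{3}\bigl(n(G_{i+1})-n(G_i)\bigr)+\tfrac{1}{3}|\Nest^*_{i+1}|.
\]
Summing over $i\in\brk{0,N-1}$ and using that every nest of $G$ is born into $\Nest^*_{j+1}$ for exactly one $j$, whence $\sum_i|\Nest^*_{i+1}|\ge\nest(G)$, produces the intermediate bound
\[
3e(G)\;\ge\;7n(G)+\nest(G)-10, \tag{$\ast$}
\]
which is exactly one integer unit short of the target $3e(G)\ge 7n(G)+\nest(G)-9$.

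To close this gap I plan to use two complementary tools. Since $3e(G)$ is a multiple of $3$, the bound $(\ast)$ automatically strengthens to the target whenever $7n(G)+\nest(G)\not\equiv 1\pmod 3$. The remaining residue class is $7n(G)+\nest(G)\equiv 1\pmod 3$, and a quick bookkeeping check shows this is exactly the class consistent with every step of the telescope being simultaneously tight in its respective lemma (each tight step contributes $\Delta n+\Delta\nest\equiv 0\pmod 3$: tight strong gives $(1,2)$, tight weak gives $(2,1)$, tight simple micro gives $(3,0)$, and the base already satisfies $n(G_0)+\nest(G_0)\equiv 1\pmod 3$). In this residual case I need to extract one extra integer unit of slack, and for that I exploit the special structure of the first extension.

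The main obstacle is thus a careful analysis of $F_1$. The key lever is that any simple $1$-plane drawing of $K_4$ admits at most one crossing, so $G_0$ has no nest and only one crossing to inherit. I intend to case-split on the type of $F_1$. A strong first link introduces at most one new crossing from the three edges of the new vertex (once the inherited $G_0$-crossing is accounted for), which forces $|\Nest^*_1|\le 1$ and rules out the tight strong profile $(\Delta e,|\Nest^*_1|)=(3,2)$. A weak first link, together with Claim~\ref{cl5.2.1}, confines new crossings to be internal to $F_1\cong K_4$, which again admits at most one internal crossing; the subtle part is showing that the tight weak profile $(5,1)$ can only be realised by a graph admitting a further edge, contradicting maximality of $G$. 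A simple micro first link is tight by default, but Lemma~\ref{micro-sim} forces $G_2$ to be a strong or weak extension of $G_1$, so the same style of first-step argument applies to $G_1\to G_2$ and supplies the missing unit. Threading these three subcases together, with the maximality-of-$G$ argument in the weak subcase being the most technical step, will complete the proof.
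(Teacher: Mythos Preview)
Your telescoping and divisibility setup are correct: summing the step bounds of Lemmas~\ref{strong}--\ref{micro} together with $\sum_i|\Nest^*_{i+1}|\ge\nest(G)$ (every nest of $G$ appears in exactly one $\Nest^*_{j}$ once its four supporting edges are present, and it then persists) does give $(\ast)$, and the residue trick correctly isolates the case $n(G)+\nest(G)\equiv 1\pmod 3$. The strong-$F_1$ subcase is also fine: the five drawings of $G_1$ in Figure~\ref{skg-cl4} show $|\Nest^*_1|\le 1$, so a strong first step is never tight and you recover the missing $\tfrac13$.

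The gap is in your weak and micro subcases. For weak $F_1$, the tight profile $(5,1)$ does \emph{not} contradict maximality of $G$; it only forces $G_1$ to have the shape of Figure~\ref{weak-K4}(b), whose external face has six vertices. That is a contradiction only when $G=G_1$. For $N\ge 2$ you must show that \emph{some later} step fails to be tight, and this is exactly where the real work lies: the paper has to prove (Claims~\ref{sk-cl4}--\ref{sk-cl7}) that in a minimum counterexample every $F_i$ is in fact simple weak, whence the outer face of $G=G_N$ has $4+2N>4$ vertices, violating Lemma~\ref{2-con1}(a). Your sketch does not supply this iteration. The micro subcase has the same defect: Lemma~\ref{micro-sim} forces $F_2$ to be strong or weak, but $G_1$ now has seven vertices and several crossings, so the $K_4$-specific argument you used for $F_1$ no longer applies; the paper needs the extra selection rule~(ii) on the sequence and a delicate case analysis (Claim~\ref{sk-cl6}) to exclude a tight $F_2$ here. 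In short, your plan recognises that one unit of slack is needed but does not establish that this slack must occur somewhere along the sequence when $F_1$ itself is tight; closing that gap amounts to reproving the substance of Claims~\ref{sk-cl4}--\ref{sk-cl7}.
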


\begin{proof} 
By Proposition~\ref{exten0}, 
$G$ has a $K_4$-extension 
sequence 
$\{G_i\}_{i\in \brk{0,N}}$.
Thus, $G_0\cong K_4$
and $G_N=G$, 
and for each $i\in \brk{N}$, 
$G_i=G[V(G_{i-1})\cup F_i]$, 
where $F_i$ is a $K_4$-link from $G_{i-1}$ 
determined by the SWM*-rule. 
We assume that the selection of this $K_4$-extension sequence $\{G_i\}_{i\in \brk{0,N}}$ is subject to the following conditions:
\begin{enumerate}[itemsep=-1mm]
	\item[(i)] if $G$ has two different  $K_4$-subgraphs 
	which share three vertices, 
	then $F_1$ is a strong $K_4$-link 
	from $G_0$\footnote{
		If $G$ has such two $K_4$-subgraphs, then $G_0$ can be one of them, 
		and thus $F_1$ is a strong $K_4$-link from $G_0$.
},
	and 
	\item[(ii)] for any $i\in \brk{N}$, 
	if $F_i$ is a weak $K_4$-link and 
	at least one edge in $G_{i-1}$ is not contained in any $K_4$-subgraph 
	of $G_{i-1}$, 
	then the only edge in 
	$E(F_i)\cap E(G_{i-1})$ 
	 is not contained in any $K_4$-subgraph of $G_{i-1}$.
\end{enumerate}

We can first prove the following claim.

\setcounter{claim}{0}

\begin{claim}\label{sk-cl1} 
For each $j\in\brk{0,N}$,
\equ{sk-e1}
{
e(G_j)\geq \frac{7}{3}n(G_j)+\frac{1}{3}\nest(G_j)-\frac{10}3,
}
and 
if (\ref{sk-e1}) is strict  
$j=i$, where $i\in \brk{N}$, 
then  $e(G_s)\ge  \frac{7}{3}n(G_s)+\frac{1}{3}\nest(G_s)-3$ for all $s\in \brk{i,N}$.
\end{claim} 

\begin{proof}
	Since $G_0\cong K_4$ and has at most one crossing, 
	$\nest(G_0)=0$ and 
	(\ref{sk-e1}) holds for $j=0$. 
	By Lemmas~\ref{strong}, \ref{weak} and \ref{micro}, 
		it can be verified easily that 
		whenever (\ref{sk-e1}) holds for $j=i$, where $i\in\brk{0,N-1}$, 
		it also holds for $j=i+1$.
		If  (\ref{sk-e1}) is strict for some 
		$i\in \brk{N}$, then 
	$e(G_i)\ge  \frac{7}{3}n(G_i)
	+\frac{1}{3}\nest(G_i)-3$.
	By Lemmas~\ref{strong}, \ref{weak} and \ref{micro} again, 
	$e(G_s)\ge  \frac{7}{3}n(G_s)
+\frac{1}{3}\nest(G_s)-3$
for all $s\in \brk{i,N}$.	
\end{proof} 


Now we suppose that 
$G$ has the minimum order 
such that (\ref{ske-e1}) fails for $G$.
In the following, we establish some claims on $G$, which lead to a contradiction.

\begin{claim}\label{sk-cl2} 
For each $i\in\brk{0,N-1}$, 
$\Nest(G_{i})\subseteq \Nest(G_{i+1})$
and 
the equality of (\ref{le-st-e1})
or (\ref{le5.2-e1}) or (\ref{micro-e})
holds respectively when 
$F_{i+1}$ is a strong or weak
or micro  
$K_4$-link from $G_{i}$.
\end{claim} 

\begin{proof}
 By the assumption 
of $G$, 
Claim~\ref{sk-cl1} implies that 
	the equality of  (\ref{sk-e1}) holds for each $j\in\brk{0,N}$.

	Suppose that
	$\Nest(G_{i})\not \subseteq \Nest(G_{i+1})$
	for some $i\in \brk{0,N-1}$, 
	implying that 
	 	$\omega(G_{i+1})\le \omega(G_{i})
	 +|\Nest^*_{i+1}|-1$. By Lemmas~\ref{strong}, ~\ref{weak}
	 	and~\ref{micro}, 
	 	we have $e(G_{i+1})-e(G_i)
	 	\ge \frac {7}3 (n(G_{i+1})-n(G_i))
	 	+\frac{1}3 |\Nest^*_{i+1}|$.
	 	It follows that 
\eqn{sk-e2}
{
e(G_{i+1})
&\ge &e(G_i)+
\frac {7}3 (n(G_{i+1})-n(G_i))
+\frac{1}3 |\Nest^*_{i+1}|
\nonumber \\
&\ge &e(G_i)+
\frac {7}3 (n(G_{i+1})-n(G_i))
+\frac{1}3 (\omega(G_{i+1})-\omega(G_i)+1)
\nonumber \\
&=&e(G_i)-\frac 73 n(G_i)-\frac 13 \omega(G_i)
+\frac {7}3 n(G_{i+1})
+\frac{1}3\omega(G_{i+1})+\frac 13
\nonumber \\
&=&\frac {7}3 n(G_{i+1})
+\frac{1}3\omega(G_{i+1})-3,
}	 	
where the last equality follows from 
the equality of 
(\ref{sk-e1}) for $j=i$.
Thus, the inequality of 
(\ref{sk-e1}) is strict for $j=i+1$,
contradicting the fact that 
 the equality of  (\ref{sk-e1}) 
 holds for each $j$ with
$0\le j\le N$.

Similarly, if for some 
$i\in \brk{0,N-1}$,
the inequality of (\ref{le-st-e1})
	 or (\ref{le5.2-e1}) or (\ref{micro-e})
	 is strict respectively when 
	 $F_{i+1}$ is a strong or weak
	 or  micro  
	 $K_4$-link from $G_{i}$,
	 then,
	 	 $e(G_{i})\ge \frac{7}{3}n(G_{i})+\frac{1}{3}\nest(G_{i})-\frac{10}3$
	 	 implies that 
	 $e(G_{i+1})\ge \frac{7}{3}n(G_{i+1})+\frac{1}{3}\nest(G_{i+1})-3$,
	 contradicting the fact that 
	 the equality of  (\ref{sk-e1}) 
	 holds for each $j\in \brk{0,N}$.
	 
	Hence Claim~\ref{sk-cl2} holds.
\end{proof}

For any $i\in \brk{0,N-1}$, 
a weak $K_4$-link $F_{i+1}$ 
from $G_{i}$  is said to be 
{\it simple} 
if $e(G_{i+1})-e(G_i)=5$ 
and $|\Nest^*_{i+1}|=1$.

\begin{claim}\label{sk-cl3} 
	For each $i\in\brk{0,N-1}$, 
	$F_{i+1}$ is a strong or simple weak
	or simple micro  
	$K_4$-link from $G_{i}$.
\end{claim} 

\begin{proof}
	For each $i\in\brk{0,N-1}$, 
	by Claim~\ref{sk-cl2}, 
	the equality of (\ref{le-st-e1})
	or (\ref{le5.2-e1}) or (\ref{micro-e})
	holds respectively
	when 
	$F_{i+1}$ is a strong or weak
	or micro  
	$K_4$-link from $G_{i}$.
	The claim then follows directly from 
	Lemmas~\ref{weak} and~\ref{micro}.  
\end{proof}

\begin{claim}\label{sk-cl4} 
	Any two  different $K_4$-subgraphs
	in $G$ have at most two 
	vertices in common. 
\end{claim} 

\begin{proof}
Suppose that there are two 
different $K_4$-subgraphs in $G$ 
which have three  
vertices in common. 
By the assumption on $G$, 
$F_1$ is a strong $K_4$-link from $G_0$.

Clearly, $|e(G_1)|-e(G_0)|\ge 3$. 
By Claim~\ref{sk-cl2}, $e(G_1)-e(G_0)=
\frac 73+\frac 13 |\Nest^*_1|$,
implying that $|\Nest^*_1|\ge 2$.
Thus, $G_1$ has crossings.
It is not difficult to verify that 
$G_1$ is one of the $1$-plane graphs shown in Figure~\ref{skg-cl4}. 
It follows that $|\Nest^*_1|\le 1$,
a contradiction.  
Thus, Claim~\ref{sk-cl4} holds.
\end{proof}

	\begin{figure}[!h]
	\centering

\begin{tikzpicture}[scale=0.5]
	\tikzset{whitenode/.style={circle,draw=black,fill=white,inner sep=2pt},
			blackedge/.style={thick},blueedge/.style={blue,thick}
		}
	\begin{pgfonlayer}{nodelayer}
		\node [style=whitenode] (0) at (-10.5, 5) {};
		\node [style=whitenode] (1) at (-10.5, 3) {};
		\node [style=whitenode] (2) at (-10.5, 1) {};
		\node [style=whitenode] (3) at (-12, 0) {};
		\node [style=whitenode] (4) at (-9, 0) {};
		\node [style=none] (5) at (-10.5, -1) {(a)};
		\node [style=whitenode] (6) at (-5.5, 5) {};
		\node [style=whitenode] (7) at (-5.5, 3) {};
		\node [style=whitenode] (8) at (-5.5, 1) {};
		\node [style=whitenode] (9) at (-7, 0) {};
		\node [style=whitenode] (10) at (-4, 0) {};
		\node [style=none] (11) at (-5.5, -1) {(b)};
		\node [style=whitenode] (12) at (-2, 3) {};
		\node [style=whitenode] (13) at (1, 3) {};
		\node [style=whitenode] (14) at (-2, 0) {};
		\node [style=whitenode] (15) at (1, 0) {};
		\node [style=whitenode] (16) at (-0.5, 5) {};
		\node [style=none] (17) at (-0.5, -1) {(c)};
		\node [style=whitenode] (18) at (3, 3) {};
		\node [style=whitenode] (19) at (6, 3) {};
		\node [style=whitenode] (20) at (3, 0) {};
		\node [style=whitenode] (21) at (6, 0) {};
		\node [style=whitenode] (22) at (4.5, 5) {};
		\node [style=none] (23) at (4.5, -1) {(d)};
		\node [style=whitenode] (24) at (9, 3) {};
		\node [style=whitenode] (25) at (12, 3) {};
		\node [style=whitenode] (26) at (9, 0) {};
		\node [style=whitenode] (27) at (12, 0) {};
		\node [style=whitenode] (28) at (10.5, 2.5) {};
		\node [style=none] (29) at (10.75, -1) {(e)};
	\end{pgfonlayer}
	\begin{pgfonlayer}{edgelayer}
		\draw [style=blackedge] (2) to (3);
		\draw [style=blackedge] (2) to (4);
		\draw [style=blackedge] (4) to (3);
		\draw [style=blackedge] (3) to (1);
		\draw [style=blackedge] (1) to (4);
		\draw [style=blackedge] (1) to (2);
		\draw [style=blueedge] (0) to (1);
		\draw [style=blueedge, bend right=25, looseness=0.75] (0) to (2);
		\draw [style=blueedge] (0) to (4);
		\draw [style=blackedge] (8) to (9);
		\draw [style=blackedge] (8) to (10);
		\draw [style=blackedge] (10) to (9);
		\draw [style=blackedge] (9) to (7);
		\draw [style=blackedge] (7) to (10);
		\draw [style=blackedge] (7) to (8);
		\draw [style=blueedge] (6) to (7);
		\draw [style=blueedge, bend right=25, looseness=0.75] (6) to (8);
		\draw [style=blueedge, bend left=15] (6) to (10);
		\draw [style=blueedge, bend right=15] (6) to (9);
		\draw [style=blueedge] (16) to (12);
		\draw [style=blackedge] (12) to (13);
		\draw [style=blueedge] (13) to (16);
		\draw [style=blackedge] (12) to (14);
		\draw [style=blackedge] (14) to (15);
		\draw [style=blackedge] (15) to (13);
		\draw [style=blackedge] (13) to (14);
		\draw [style=blackedge] (12) to (15);
		\draw [style=blueedge, bend right=60, looseness=1.25] (16) to (14);
		\draw [style=blueedge] (22) to (18);
		\draw [style=blackedge] (18) to (19);
		\draw [style=blueedge] (19) to (22);
		\draw [style=blackedge] (18) to (20);
		\draw [style=blackedge] (20) to (21);
		\draw [style=blackedge] (21) to (19);
		\draw [style=blackedge] (19) to (20);
		\draw [style=blackedge] (18) to (21);
		\draw [style=blueedge, bend right=60, looseness=1.25] (22) to (20);
		\draw [style=blueedge, bend left=60, looseness=1.25] (22) to (21);
		\draw [style=blueedge] (28) to (24);
		\draw [style=blackedge] (24) to (25);
		\draw [style=blueedge] (25) to (28);
		\draw [style=blackedge] (24) to (26);
		\draw [style=blackedge] (26) to (27);
		\draw [style=blackedge] (27) to (25);
		\draw [style=blackedge] (25) to (26);
		\draw [style=blackedge] (24) to (27);
		\draw [style=blueedge, in=135, out=120, looseness=2.50] (28) to (26);
	\end{pgfonlayer}
\end{tikzpicture}

	
	
	\caption{Possible $1$-planar drawings for $G_1$
if $F_1$ is a strong $K_4$-link from $G_0$	
} 
	
	\label{skg-cl4} 
\end{figure}
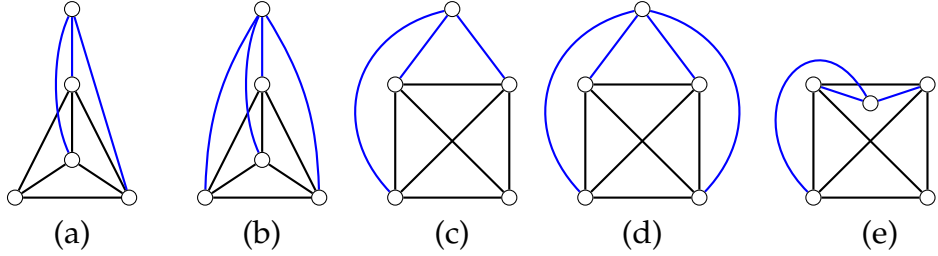 

\begin{claim}\label{sk-cl5} 
Let $i\in \brk{N}$. If $F_j$ is a weak $K_4$-link from $G_{j-1}$ for 
every $j\in \brk{i}$, then $G_i$ has the following properties:

\vspace{-3 mm}

\begin{enumerate}[itemsep=-1mm]
	\item[(i)] each edge in $G_i$ is contained in some $K_4$-subgraph 
	of $G_i$; 
	
	\item[(ii)] 
	each $K_3$-subgraph of $G_i$
	is contained in a $K_4$-subgraph of $G_i$;
	
	\item[(iii)] the external face of $G_i$ 
	is bounded by exactly $4+2i$ 
	clean edges, while each of its 
	internal faces is a false $3$-face; and 
	
	\item[(iv)] 
	there exists a clean edge within each nest in $\Nest_i$, 
	and every clean edge in $G_i$
	is either within some nest of $\Nest_i$ or on the  boundary of 
the external face of $G_i$.
\end{enumerate}  
\end{claim} 

\begin{proof}
Clearly, properties (i) and (ii) hold 
for $G_0$.
If $G_0$ has no crossing, then 
$\Nest^*_1=\emptyset$,
implying that 
$F_1$ is not a simple weak $K_4$-link 
from $G_0$, 
contradicting Claim~\ref{sk-cl3}.
Thus, $G_0$ has a crossing
and property (iii) also holds for $G_0$.

Now we assume that these three properties in the claim hold
for $G_{j}$, 
where $0\le j<i$. We will show that 
they also hold for $G_{j+1}$, 

	By Claim~\ref{sk-cl3} and the given condition, 
	$F_{j+1}:=G[\{x,y,u,v\}]$ is a simple weak $K_4$-link from $G_{j}$,
	where $uv$ is an edge in $G_{j}$
	and $x,y\in V(F_{j+1})\setminus V(G_{j})$.
	We claim that both $x$ and $y$ are in the external face $R$ of $G_j$.

By the assumption,  $G_j$ has properties (i)-(iv). 
Suppose $x$ is within some false $3$-face
$R_0$ of $G_j$. 
If the only clean edge $e_0$ 
of $R_0$ is not on $\border(R)$,
then by property (iv) for $G_j$, 
$x$ is within some nest of $\Nest_j$, 
implying that 
$\Nest_j\not\subseteq \Nest_{j+1}$,
contradicting Claim~\ref{sk-cl2}. 
Thus, $e_0$ is on $\border(R)$. 
Then, $y$ is within either $R_0$
or $R$, as shown in Figure~\ref{weak-K4-0}.
However, in this case, $\Nest^*_{j+1}=\emptyset$, 
contradicting the fact that $F_{j+1}$
is a simple weak $K_4$-link from $G_j$.

	 	\begin{figure}[!h]
	 	\centering
\tikzset{
  vertex/.style={circle, draw=black, fill=white, inner sep=2pt},  
  bluenode/.style={circle, draw=blue, fill=blue!20,minimum size=0.6em,inner sep=2pt},
  blackedge/.style={black, line width=1pt},                    
  blueedge/.style={blue, line width=0.8pt},                      
  blueedge_normal/.style={blue}                                  
}
\begin{tikzpicture}[scale=0.6,bezier bounding box]
 \draw[draw=none, fill=gray!20] (0,0) arc[start angle=40, end angle=-220, x radius=3.1cm, y radius=2.8cm];

    \node [vertex] (v) at (-0, 0) {$v$};
    \node [vertex] (u) at (-4.6, 0) {$u$};
    \node [vertex] (hu) at (-4.6, -3) {};
    \node [vertex] (hv) at (0, -3) {};
\draw[ blackedge] (u) to (hu);
\draw[ blackedge] (v) to (hv);
\draw[ blackedge] (hu) to (hv);
\draw[ blackedge] (u) to (hv);
\draw[ blackedge] (v) to (hu);
\draw[ blackedge] (v) to (u);

\node [bluenode] (x) at (-2.8, -0.8) {};
\node [bluenode] (y) at (-1.8, -0.8) { };
\node   at (-2.8, -0.8) { \tiny $x$};
\node  at (-1.8, -0.8) {\tiny $y$};
\node   at (-2.5, 0.3) {$e_0$};
\node   at (-2.3, -2.1) {$\alpha'$};
\draw[blueedge] (u) to (x);
\draw[blueedge] (u) to (y);
\draw[blueedge] (x) to (y);
\draw[blueedge] (x) to (v);
\draw[blueedge] (v) to (y);
\node  at (-2.5, -5.5) {(a)};
\end{tikzpicture}
\quad \quad \quad \quad \quad \quad \quad
\begin{tikzpicture}[scale=0.6,bezier bounding box]
 \draw[draw=none, fill=gray!20] (0,0) arc[start angle=40, end angle=-220, x radius=3.1cm, y radius=2.8cm];

    \node [vertex] (v) at (-0, 0) {$v$};
    \node [vertex] (u) at (-4.6, 0) {$u$};
    \node [vertex] (hu) at (-4.6, -3) {};
    \node [vertex] (hv) at (0, -3) {};
\draw[ blackedge] (u) to (hu);
\draw[ blackedge] (v) to (hv);
\draw[ blackedge] (hu) to (hv);
\draw[ blackedge] (u) to (hv);
\draw[ blackedge] (v) to (hu);
\draw[ blackedge] (v) to (u);

\node [bluenode] (x) at (-2.5, -0.8) {  $x$};
\node [bluenode] (y) at (-2.5, 1) { $y$};
\node   at (-2, 0.3) {$e_0$};
\node   at (-2.3, -2.1) {$\alpha'$};
\draw [blueedge] (u) to (x);
\draw [blueedge] (u) to (y);
\draw [blueedge] (x) to (y);
\draw [blueedge] (x) to (v);
\draw [blueedge] (v) to (y);
\node  at (-2.5, -5.5) {(b)};
\end{tikzpicture}
	
	
	\caption{$x$ is within a false $3$-face} 
	
	\label{weak-K4-0} 
\end{figure}
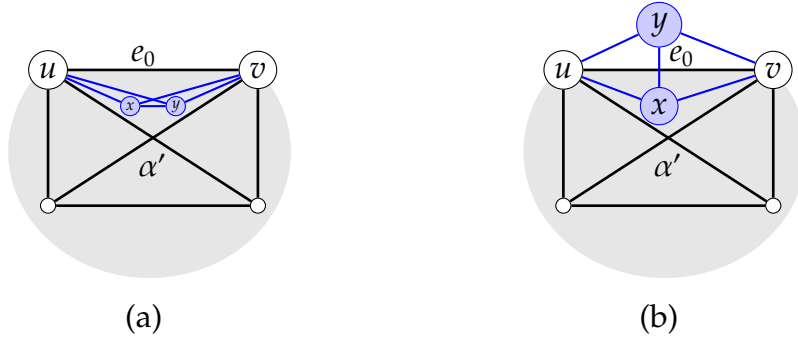 

	Thus,  both $x$ and $y$ are 
	within the external face $R$ of $G_j$.
By Claim~\ref{sk-cl2}, 
$G_{j+1}$ contains a new nest.
Such a new nest 
must be in the form 
$\langle \{u,v\},  \{\alpha, \alpha' \}\rangle$,
where $\alpha$ is a crossing 
between two edges in
$E_{G_{j+1}}(\{x,y\},\{u,v\})$ and 
$\alpha'$  is the crossing in $G_{j}$ 
such that 
$uv\alpha' u$ bounds a false $3$-face
in $G_{j}$, as shown in 
Figure~\ref{weak-K4} (b). 

Clearly,  $uv$ is a clean edge on $\border(R)$,
and for the external face $R'$ of $G_{j+1}$,
$\border(R')$ can be obtained from 
$\border(R)$ by replacing edge $uv$ by 
a path consisting of three clean edges
(e.g. path $uxyv$ in Figure~\ref{weak-K4} (b)).

	 	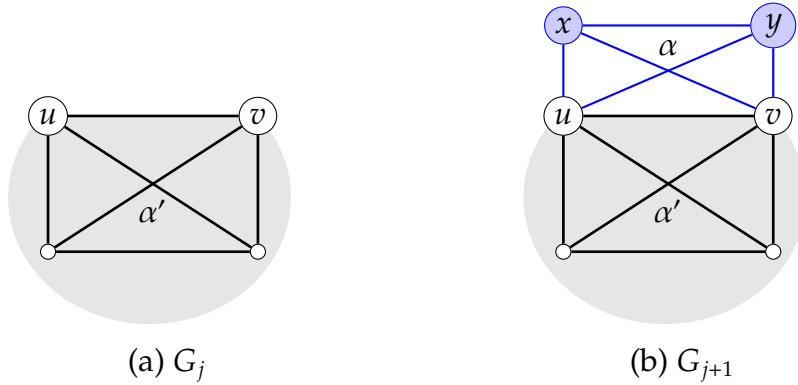
\begin{figure}[!h]
	 	\centering
	 	\tikzset{
  vertex/.style={circle, draw=black, fill=white, inner sep=2pt},  
  blackedge/.style={black, line width=1.0pt},                    
  blueedge/.style={blue, line width=0.8pt},                      
  blueedge_normal/.style={blue},
   bluenode/.style={circle, draw=blue, fill=blue!20,minimum size=0.6em,inner sep=2pt}                                 
}
\begin{tikzpicture}[scale=0.6,bezier bounding box]
 \draw[draw=none, fill=gray!20] (0,0) arc[start angle=40, end angle=-220, x radius=3.1cm, y radius=2.8cm];

    \node [vertex] (v) at (-0, 0) {$v$};
    \node [vertex] (u) at (-4.6, 0) {$u$};
    \node [vertex] (hu) at (-4.6, -3) {};
    \node [vertex] (hv) at (0, -3) {};
\draw[ blackedge] (u) to (hu);
\draw[ blackedge] (v) to (hv);
\draw[ blackedge] (hu) to (hv);
\draw[ blackedge] (u) to (hv);
\draw[ blackedge] (v) to (hu);
\draw[ blackedge] (v) to (u);

\node   at (-2.3, -2.1) {$\alpha'$};

\node  at (-2, -5.5) {(a) $G_j$};
\end{tikzpicture}
\quad \quad \quad \quad \quad \quad \quad
\begin{tikzpicture}[scale=0.6,bezier bounding box]
 \draw[draw=none, fill=gray!20] (0,0) arc[start angle=40, end angle=-220, x radius=3.1cm, y radius=2.8cm];

    \node [vertex] (v) at (-0, 0) {$v$};
    \node [vertex] (u) at (-4.6, 0) {$u$};
   \node [bluenode]  (x) at (-4.6, 2) {$x$};
    \node [bluenode] (y) at (0, 2) {$y$};
    \node [vertex] (hu) at (-4.6, -3) {};
    \node [vertex] (hv) at (0, -3) {};
\draw[ blackedge] (u) to (hu);
\draw[ blackedge] (v) to (hv);
\draw[ blackedge] (hu) to (hv);
\draw[ blackedge] (u) to (hv);
\draw[ blackedge] (v) to (hu);
\draw[ blackedge] (v) to (u);
\draw[blueedge] (x) to (u);
\draw[blueedge] (x) to (y);
\draw[blueedge] (y) to (v);
\draw[blueedge] (x) to (v);
\draw[blueedge] (y) to (u);
\node   at (-2.3, -2.1) {$\alpha'$};
\node   at (-2.3, 1.5) {$\alpha$};

\node  at (-2, -5.5) {(b) $G_{j+1}$};
\end{tikzpicture}
	 	
	 	
	 	\caption{$G_{j}$ and $G_{j+1}$} 
	 	
	 	\label{weak-K4} 
	 \end{figure} 
	 
Now it can be seen easily that properties 
(i), (ii), (iii) and (iv) hold for $G_{j+1}$.
The claim follows.
\end{proof}

\begin{claim}\label{sk-cl6} Assume that 
	$i\in \brk{N}$, and  
$F_j$ is a simple weak $K_4$-link
from $G_{j-1}$
for every $j\in \brk{i-1}$. 
Then
$F_{i}$ is not a simple micro $K_4$-link from $G_{i-1}$. 
\end{claim} 

\begin{proof}
Suppose that $F_{i}:=G[\{x,y,z,u\}]$
is a simple micro $K_4$-link from $G_{i-1}$, 
where $u$ is the only vertex of $F_{i}$ 
which is contained in $G_{i-1}$,
and $zv$ is the only edge in 
$E(G_{i})\setminus (E(F_{i})\cup E(G_{i-1}))$,
as shown in Figure~\ref{micro-ex} (a).

\begin{figure}[!h]
	\centering
\begin{tikzpicture}[scale=0.75]

		\tikzset{
			whitenode/.style={circle, draw=black, fill=white, minimum size=5mm,inner sep=2pt},
			whitenode2/.style={circle, draw=black, fill=white, minimum size=3mm},
			bluenode/.style={circle, draw=blue, fill=blue!20, minimum size=5mm},
			blackedge/.style={draw=black, line width=1pt},
			blueedge/.style={draw=blue},
			blueedge1/.style={draw=blue, line width=1.5pt} %
		}
		\draw[thick,fill=gray!20] (0, 3) ellipse (3.0cm and 1.2cm);
		\node   at (0, 3.5) {$G_{i-1}$};
		\node [whitenode] (u) at (-1, 2.5) {$u$};
		\node [whitenode] (v) at (1, 2.5) {$v$};
	    \node [whitenode] (x) at (-2.0, 0.5) {$x$};
        \node [whitenode] (z) at (0, 0.5) {$z$};
        \node [whitenode] (y) at (-1, -1.0) {$y$};
         \draw[ blackedge] (u) to (x) to (y) to (z) to (u);
         \draw[ blackedge] (v) to (z);
         \draw[ blackedge] (x) to (z);
         \draw[ blackedge] (u) to (y);

         \node   at (0, -2) {(a) $G_{i}$};
	\end{tikzpicture}
\quad \quad \quad \quad
\begin{tikzpicture}[scale=0.75]

		\tikzset{
			whitenode/.style={circle, draw=black, fill=white, minimum size=5mm,inner sep=2pt},
			whitenode2/.style={circle, draw=black, fill=white, minimum size=3mm},
			bluenode/.style={circle, draw=blue, fill=blue!20, minimum size=5mm},
			blackedge/.style={draw=black, line width=1pt},
			blueedge/.style={draw=blue,line width=0.5pt},
			blueedge1/.style={draw=blue, line width=1.5pt} %
		}
		\draw[thick,fill=gray!20] (0, 3) ellipse (3.0cm and 1.2cm);
		\node   at (0, 3.5) {$G_{i-1}$};
		\node [whitenode] (u) at (-1, 2.5) {$u$};
		\node [whitenode] (v) at (1, 2.5) {$v$};
	    \node [whitenode] (x) at (-2.0, 0.5) {$x$};
        \node [whitenode] (z) at (0, 0.5) {$z$};
        \node [whitenode] (y) at (-1, -1.0) {$y$};
        \node [whitenode] (z1) at (1, -1.0) {$z_1$};
		\node [whitenode] (z2) at (2, 0.5) {$z_2$};
         \draw[ blackedge] (u) to (x) to (y) to (z) to (u);
         \draw[ blackedge] (v) to (z);
         \draw[ blackedge] (x) to (z);
         \draw[ blackedge] (u) to (y);
         \draw[ blackedge] (z) to (z1);
          \draw[ blackedge] (z) to (z2);
         \draw[ blackedge] (z1) to (z2);
         \draw[ blackedge] (z2) to (z2);
         \draw[ blackedge] (v) to (z1);
          \draw[ blackedge] (v) to (z2);
         \node   at (0, -2) {(b) $G_{i+1}$};
	\end{tikzpicture}
	
\caption {$G_i$ is a simple micro $K_4$-extension of $G_{i-1}$} 
	
	\label{micro-ex} 
\end{figure}
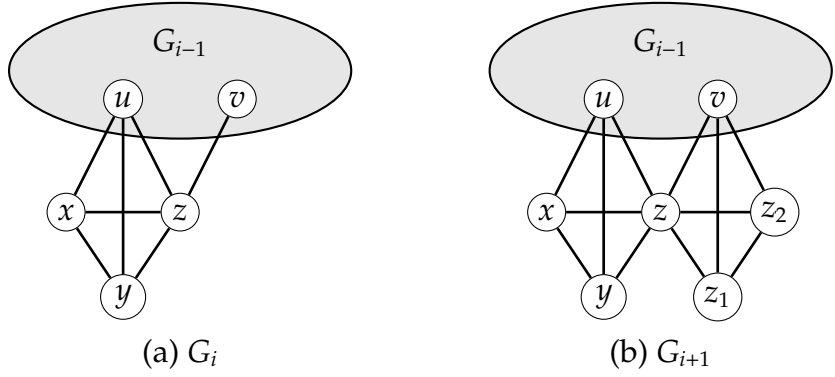 

By Claim~\ref{sk-cl5}, 
$zv$ is the only edge in $E(G_i)$
which is not contained 
in any $K_4$-subgraph of $G_i$,
and any $K_3$-subgraph $H$ 
of $G_i$ 
is contained in some $K_4$-subgraph 
of $G_i$, unless $zv$ is an edge in $H$.

Since $zv$ is not contained in any 
$K_4$-subgraph of $G_i$,
we have $i<N$. 
We are now 
going to show that 
$F_{i+1}$ is not a  strong
$K_4$-link from $G_i$.

Suppose that $F_{i+1}=\{x',w_1,w_2,w_3\}$ is  a strong $K_4$-link from $G_i$,
where $x'$ is the only vertex in 
$V(F_{i+1})\setminus V(G_i)$.
Since $F_i$ is a micro $K_4$-link 
from $G_{i-1}$,  
by the SWM*-rule, 
we have
$|\{w_1,w_2,w_3\}\cap V(G_{i-1})|\le 1$,
implying that 
$|\{w_1,w_2,w_3\}\cap \{x,y,z\}|\ge 2$.
Since $G$ does not have two different  
$K_4$-subgraphs which contain
three vertices in common, 
we have $u\notin \{w_1,w_2,w_3\}$
and  $|\{w_1,w_2,w_3\}\cap \{x,y,z\}|=2$.
It follows that $v \in \{w_1,w_2,w_3\}$,
implying that 
$|N_G(v)\cap \{x,y,z\}|\ge 2$, 
contradicting the fact that 
$F_i$ is a simple micro $K_4$-link from $G_{i-1}$. 
Hence $F_{i+1}$ is not a strong 
$K_4$-link from $G_i$.

By the given condition and Claim~\ref{sk-cl5}, each edge in $G_{i-1}$ is contained in some 
$K_4$-subgraph of $G_{i-1}$, 
implying that 
$zv$ is the only edge in $E(G_i)$
which is not contained 
in any $K_4$-subgraph of $G_i$.
Due to condition (ii)
for choosing the $K_4$-extension 
sequence $\{G_i\}_{0\le i\le N}$, 
mentioned before Claim~\ref{sk-cl1}, 
$F_{i+1}$ is a weak $K_4$-link 
from $G_i$ such that 
$zv\in E(F_{i+1})$. 
Assume that $F_{i+1}:=G[\{z,v,z_1,z_2\}$,
where $\{z_1,z_2\}= V(F_{i+1})\setminus V(G_i)$.

By Claim~\ref{sk-cl3}, $F_{i+1}$ is a 
simple weak $K_4$-link from $G_i$,
implying that $d_{G_{i+1}}(z_s)=3$ 
for both $s=1,2$,
and $|\Nest^*_{i+1}|=1$.
As $|\Nest^*_{i+1}|=1$, 
	there is a crossing 
between two edges in $F_{i+1}$,
and $zv$ is not crossed by $z_1z_2$;
otherwise, 
$|\Nest^*_{i+1}|=0$, a contradiction.
Thus, $zz_{s}$ crosses $vz_{3-s}$ for some $s\in [2]$,
say $s=2$, as shown in Figure~\ref{micro-ex} (b). 
As $zv$ is not on the boundary of 
any false $3$-face, 
no new nest is produced, 
implying that $|\Nest^*_{i+1}|=0$, a contradiction too.
 Hence Claim~\ref{sk-cl6} holds. 
\end{proof}

\begin{claim}\label{sk-cl7} 
	For each $i\in \brk{N}$, 
	$F_{i}$ is a simple weak
	$K_4$-link from $G_{i-1}$. 
\end{claim}

\begin{proof}
Suppose that the claim fails, 
and $s$ is the minimum number in $\brk{N}$
such that $F_{s}$ is not a 
simple weak
$K_4$-link from $G_{s-1}$.
If $s<N$, 
by Claim~\ref{sk-cl6}, 
$F_s$ is not a micro $K_4$-link from 
$G_{s-1}$. 
If $s=N$,  $F_N$ is not a simple micro $K_4$-link from 
$G_{N-1}$, otherwise, 
some edge in $G$ is not contained
in any $K_4$-subgraph, a contradiction.  
Thus, 
$F_{s}:=G[\{x,u,v,w\}]$ is a strong $K_4$-link from 
$G_{s-1}$, 
where $x$ is the only vertex in 
$V(F_s)\setminus V(G_{s-1})$.
By Claim~\ref{sk-cl5}, vertices 
$u,v$ and $w$  are contained 
in some $K_4$-subgraph of $G_{s-1}$,
implying that $G_s$ has two $K_4$-subgraphs sharing $3$ vertices,  
a contradiction to Claim~\ref{sk-cl4}.
Thus, 
Claim~\ref{sk-cl7} holds by Claim~\ref{sk-cl3}.
\end{proof}

Now we are going to complete the proof by Claims~\ref{sk-cl5} and~\ref{sk-cl7}.  
By  Claim~\ref{sk-cl7}, 
$F_{i}$ is a simple weak
$K_4$-link from $G_{i-1}$
for each $i\in \brk{N}$.
Then, by Claim~\ref{sk-cl5},
the external face $R$ of $G$ is bounded 
by exactly $4+2N$ clean edges,
implying that $R$ has $4+2N$ 
vertices in $G$. 
Lemma~\ref{2-con1} (a)
yields that $N=0$,
implying that $n(G)=4$, a contradiction. 

Hence the result holds.
\end{proof}

\section{Proof of Theorem~\ref{main1.3}}
\label{sec6}

In order to prove Theorem~\ref{main1.3},
we first provide a family of 
maximal $1$-plane 
graphs $H_n$ of order $n$
and size 
$\Ceil{\frac {7n}3}-3$
 for $n\ge 5$.

\subsection{Maximal $1$-plane graphs $H_n$ for $n\ge 5$} 
 
For $n=5,6,7$, $H_n$ is shown in 
Figure~\ref{order5-7}, which is obviously maximal.

\begin{figure}[!h]
	\centering

\begin{tikzpicture}[scale=0.8,bezier bounding box]
\tikzset{
			whitenode/.style={circle, draw=black, fill=white, minimum size=0.3em,inner sep=1.5pt},
			bluenode/.style={circle, draw=blue, fill=blue!20, minimum size=5mm},
			blackedge/.style={draw=black, line width=1pt},
			blueedge/.style={draw=blue},
			blueedge1/.style={draw=blue, line width=1.5pt} %
		}
	\begin{pgfonlayer}{nodelayer}
		\node [style=whitenode] (0) at (-7, 3) {$u$};
		\node [style=whitenode] (1) at (-4, 3) {$w_0$};
		\node [style=whitenode] (2) at (-7, 0) {};
		\node [style=whitenode] (3) at (-4, 0) {};
		\node [style=whitenode] (4) at (-5.5, 2.5) {$w_1$};
		\node [style=none] (5) at (-5.5, 0.75) {$\alpha_0$};
		\node [style=none] (6) at (-5.5, 3.5) {$\alpha_1$};
		\node [style=whitenode] (7) at (-1.5, 3) {$u$};
		\node [style=whitenode] (8) at (1.5, 3) {$w_0$};
		\node [style=whitenode] (9) at (-1.5, 0) {};
		\node [style=whitenode] (10) at (1.5, 0) {};
		\node [style=whitenode] (11) at (0, 2.5) {$w_1$};
		\node [style=none] (12) at (0, 0.75) {$\alpha_0$};
		\node [style=none] (13) at (0, 3.5) {$\alpha_1$};
		\node [style=whitenode] (14) at (-1, 1.5) {};
		\node [style=whitenode] (15) at (3.5, 3) {$u$};
		\node [style=whitenode] (16) at (6.5, 3) {$w_0$};
		\node [style=whitenode] (17) at (3.5, 0) {};
		\node [style=whitenode] (18) at (6.5, 0) {};
		\node [style=whitenode] (19) at (5, 2.5) {$w_1$};
		\node [style=none] (20) at (5, 0.75) {$\alpha_0$};
		\node [style=none] (21) at (5, 3.5) {$\alpha_1$};
		\node [style=whitenode] (22) at (4, 1.5) {};
		\node [style=whitenode] (23) at (8, 0) {};
		\node [style=none] (24) at (-5.75, -1) {(a) $H_5$};
		\node [style=none] (25) at (0, -1) {(b) $H_6$};
		\node [style=none] (26) at (5.25, -1) {(c) $H_7$};
	\end{pgfonlayer}
	\begin{pgfonlayer}{edgelayer}
		\draw [style=blackedge] (4) to (0);
		\draw [style=blackedge] (0) to (1);
		\draw [style=blackedge] (1) to (4);
		\draw [style=blackedge] (0) to (2);
		\draw [style=blackedge] (2) to (3);
		\draw [style=blackedge] (3) to (1);
		\draw [style=blackedge] (1) to (2);
		\draw [style=blackedge] (0) to (3);
		\draw [style=blackedge, in=135, out=120, looseness=2.50] (4) to (2);
		\draw [style=blackedge] (11) to (7);
		\draw [style=blackedge] (7) to (8);
		\draw [style=blackedge] (8) to (11);
		\draw [style=blackedge] (7) to (9);
		\draw [style=blackedge] (9) to (10);
		\draw [style=blackedge] (10) to (8);
		\draw [style=blackedge] (8) to (9);
		\draw [style=blackedge] (7) to (10);
		\draw [style=blackedge, in=135, out=120, looseness=2.50] (11) to (9);
		\draw [style=blackedge] (7) to (14);
		\draw [style=blackedge] (14) to (9);
		\draw [style=blackedge] (19) to (15);
		\draw [style=blackedge] (15) to (16);
		\draw [style=blackedge] (16) to (19);
		\draw [style=blackedge] (15) to (17);
		\draw [style=blackedge] (17) to (18);
		\draw [style=blackedge] (18) to (16);
		\draw [style=blackedge] (16) to (17);
		\draw [style=blackedge] (15) to (18);
		\draw [style=blackedge, in=135, out=120, looseness=2.50] (19) to (17);
		\draw [style=blackedge] (15) to (22);
		\draw [style=blackedge] (22) to (17);
		\draw [style=blackedge, bend right=15] (17) to (23);
		\draw [style=blackedge] (23) to (18);
		\draw [style=blackedge] (16) to (23);
	\end{pgfonlayer}
\end{tikzpicture}

	
%
	\caption{Maximal $1$-plane graphs 
		$H_n$ of order $n$ for $n=5,6,7$} 
	
	\label{order5-7} 
\end{figure}
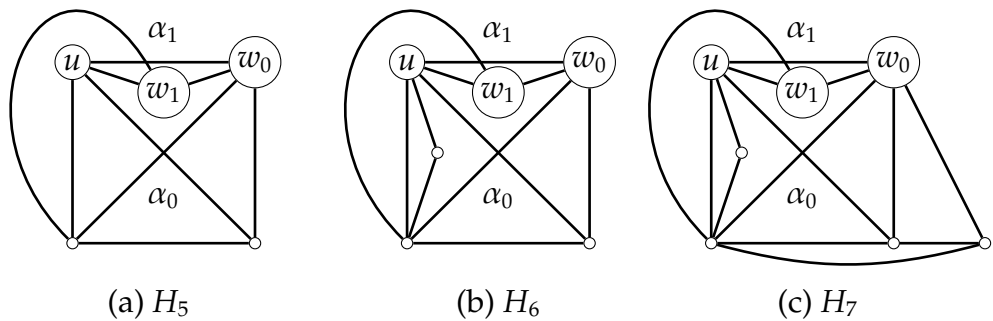

Observed from Figure~\ref{order5-7}, 
each of $H_5, H_6$ and $H_7$ contains a local structure $D$, 
as shown in 
Figure~\ref{insert} (a), 

	that can be described as a region
	bounded by $u\alpha_0w_0\alpha_2u$,
	containing 	a vertex $w_1$ 
	within 
	a false $3$-face
	of $H_i-w_1$ bounded by 
	$uw_0\alpha_0u$, 
	where $5\le i\le 7$
	and $\alpha_0$ is a crossing,
	such that $w_1$ is incident with 
	three edges, one of which 
	crosses 
	edge $uw_0$ at $\alpha_1$. 

Such a local structure 
$D$ can be written as $
\langle (u, w_0,w_1), (\alpha_0,  \alpha_1)\rangle$,
which does not represent a graph, but part of 
a $1$-plane graph. 
Let $\Omega(H)$ denote the family of 
such local structures in a $1$-plane graph $H$.  
Clearly, $\Omega(H_i)$ has exactly one member for each $i\in \brk{5,7}$.

\begin{figure}[H]
	\centering
\tikzset{
			whitenode/.style={circle, draw=black, fill=white, minimum size=0.3em,inner sep=0pt},
			bluenode/.style={circle, draw=blue, fill=blue!20, minimum size=5mm},
			blackedge/.style={draw=black, line width=1pt},
			blueedge/.style={draw=blue},
			blueedge1/.style={draw=blue, line width=1.5pt} %
		}
\begin{tikzpicture}[scale=0.6]
	\begin{pgfonlayer}{nodelayer}
		\node [style=none] (0) at (-6.5, 5) {};
		\node [style=none] (1) at (-4.5, 5) {};
		\node [style=whitenode] (2) at (-9, 0) {$u$};
		\node [style=whitenode] (3) at (-2, 0) {$w_0$};
		\node [style=whitenode] (4) at (-5.5, 1.5) {$w_1$};
		\node [style=none] (5) at (-5.5, -0.75) {};
		\node [style=none] (6) at (-5.5, 4.5) {$\alpha_0$};
		\node [style=none] (7) at (-6, -0.4) {$\alpha_1$};
		\node [style=none] (8) at (4.5, 5) {};
		\node [style=none] (9) at (6.5, 5) {};
		\node (arrow) at (0, 2) {\Large $\Rightarrow$};
		\node [style=whitenode] (10) at (2, 0) {$u$};
		\node [style=whitenode] (11) at (9, 0) {$w_0$};
		\node [style=whitenode] (12) at (5.5, 3) {$a$};
		\node [style=none] (14) at (5.5, 4.5) {$\alpha_0$};
		\node [style=none] (15) at (5, -0.375) {$\alpha_1$};
		\node [style=whitenode] (16) at (5.5, 1.5) {$w_1$};
		\node [style=whitenode] (17) at (6, 0.5) {$b$};
		\node [style=none] (18) at (5.5, -0.75) {};
		\node [style=whitenode] (19) at (4.5, 0.5) {$w_2$};
		\node [style=none] (20) at (5.25, 2) {};
		\node [style=none] (21) at (4.15, 1.325) {$\alpha_2$};
	\end{pgfonlayer}
	\begin{pgfonlayer}{edgelayer}
		\draw [style=blackedge] (0.center) to (3);
		\draw [style=blackedge] (1.center) to (2);
		\draw [style=blackedge] (2) to (3);
		\draw [style=blackedge] (4) to (5.center);
		\draw [style=blackedge] (4) to (2);
		\draw [style=blackedge] (4) to (3);
		\draw [style=blackedge] (8.center) to (11);
		\draw [style=blackedge] (9.center) to (10);
		\draw [style=blackedge] (12) to (10);
		\draw [style=blackedge] (12) to (11);
		\draw [style=blackedge] (16) to (11);
		\draw [style=blackedge] (16) to (17);
		\draw [style=blackedge] (17) to (11);
		\draw [style=blackedge] (16) to (18.center);
		\draw [style=blackedge] (10) to (11);
		\draw [style=blackedge] (16) to (19);
		\draw [style=blackedge] (19) to (10);
		\draw [style=blackedge] (10) to (16);
		\draw [style=blackedge] (11)
			 to [in=30, out=150, looseness=0.75] (20.center)
			 to [in=90, out=-150, looseness=0.75] (19);
	\end{pgfonlayer}
\end{tikzpicture}

	(a) $D=\langle \{u, w_0,w_1\}, \{\alpha_0,  \alpha_1\}\rangle$
	\hspace{3 cm}
	(b) $\phi(D)$ 
	
	\caption{$\phi(D)$ is obtained 
		from $D$ by inserting three new vertices and seven new edges} 
	
	\label{insert} 
\end{figure}
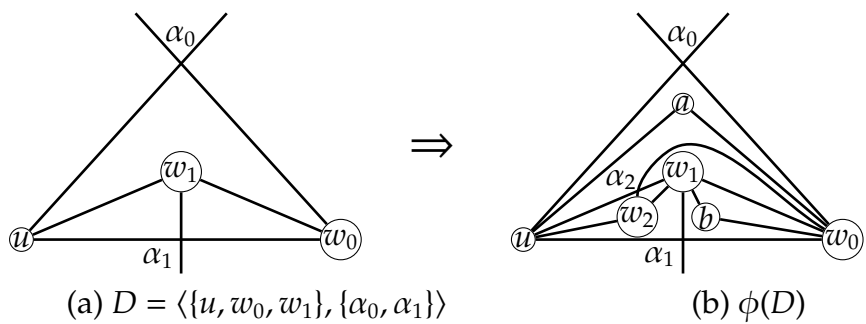

Let $H$ be a $1$-plane graph,  
and $D\in \Omega(H)$, 
as shown in Figure~\ref{insert} (a). 
Let $\phi(H)$ denote the $1$-plane 
graph obtained from $H$ by converting 
$D$ into $\phi(D)$,
as shown in Figure~\ref{insert} (b),
where $\phi(D)$ is obtained  
from $D$ 
by inserting $3$ 
new vertices $w_2$, $a$ and $b$
which are within faces of $H$ 
bounded by  $u\alpha_1w_1u$, $uw_1w_0\alpha_0w$
and $w_0w_1\alpha_1w_0$,
respectively,  
and $7$ new edges 
$w_2u, w_2w_1, w_2w_0,
au,aw_0, bw_0$ and 
$bw_1$, where $w_2w_0$ 
is the only crossing edge, 
which crosses edge $uw_1$
at point $\alpha_2$.
Observe that 
$\Omega(\phi(H))$ has a member 
$
\langle (u, w_1,w_2), (\alpha_1,  \alpha_2)\rangle$
within $\phi(D)$.
Thus, the $\phi(H)$ has the following property.

\begin{lemma}\label{le6-1}
	Let $H$ be a $1$-plane graph with
	$D\in \Omega(H)$. 
	If $\phi(H)$ is the graph introduced above, 
	then $\Omega(\phi(H))\ne \emptyset$, and whenever $H$ is maximal $1$-plane, $\phi(H)$ is too.
\end{lemma}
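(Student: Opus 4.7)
The plan is to verify the two assertions separately, both by direct inspection of the construction of $\phi(D)$ from $D$.

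For the first assertion, $\Omega(\phi(H))\ne \emptyset$, I would exhibit a witnessing local structure explicitly inside $\phi(D)$. By construction the vertex $w_2$ is inserted in the false $3$-face bounded by $u w_1 \alpha_1 u$ in $\phi(H)-w_2$, it is incident with exactly the three edges $w_2u$, $w_2w_1$, $w_2w_0$, and among these $w_2w_0$ is the only crossing edge, meeting $uw_1$ at the new crossing $\alpha_2$. These are precisely the conditions in the definition of $\Omega$ applied to the triple $(u,w_1,w_2)$ with crossings $(\alpha_1,\alpha_2)$, so $\langle (u,w_1,w_2),(\alpha_1,\alpha_2)\rangle\in \Omega(\phi(H))$.

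For the second assertion, I would use the characterisation implicit in Lemma~\ref{adj}: a $1$-plane graph is maximal if and only if, on the boundary of every face, any two vertices are adjacent (otherwise the missing edge could be drawn inside the face without creating a new crossing). Since $\phi(H)$ and $H$ agree outside the region occupied by $D$ (respectively $\phi(D)$), every face of $\phi(H)$ that does not meet $\phi(D)$ is already a face of $H$, and hence inherits the adjacency property from the maximality of $H$. Thus the entire argument reduces to checking the new faces lying inside $\phi(D)$.

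The bulk of the work, and the only step requiring any care, is therefore the enumeration of the finitely many faces of $\phi(D)$ created by inserting $w_2$, $a$, $b$ and the seven new edges (with the one new crossing $\alpha_2$ on $uw_1$), and verifying for each one that the vertices on its boundary are pairwise adjacent in $\phi(H)$. From Figure~\ref{insert}(b) these faces are: the two faces around $a$ (with boundary vertices among $\{u,w_0,a\}$), the two faces around $b$ (with boundary vertices among $\{w_0,w_1,b\}$), and the three faces created around $w_2$ by the edges $w_2u$, $w_2w_1$ and the crossing segments of $w_2w_0$ with $uw_1$ (with boundary vertex sets $\{u,w_2\}$, $\{w_1,w_2\}$ and $\{u,w_1,w_2\}$, the latter being a false $3$-face). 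In every case each pair of boundary vertices is joined by an edge of $\phi(D)$ by construction, so Lemma~\ref{adj} is satisfied. The main obstacle is simply to carry out this bookkeeping exhaustively; once done, both conclusions of the lemma follow.
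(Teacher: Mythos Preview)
Your argument for $\Omega(\phi(H)) \neq \emptyset$ is correct and matches the paper's.

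For maximality, however, you invoke a characterisation that does not hold. Lemma~\ref{adj} gives only the forward implication: if $G$ is maximal $1$-plane then any two vertices on a common face boundary are adjacent. The converse is false. For instance, draw the octahedron $K_{2,2,2}$ planarly: every face is a triangle, so the face-adjacency condition holds trivially, yet any of the three missing ``antipodal'' edges can be added by crossing a single interior edge, so the drawing is not maximal $1$-plane. Consequently your face-by-face check only excludes adding a \emph{clean} edge inside $\phi(D)$; it does not address the possibility of adding an edge that crosses one of the clean edges of $\phi(D)$ (such as $w_1w_0$, $au$, or $w_2u$).

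The paper argues differently. It observes that the closed curve $uw_0\alpha_0u$ bounding the region of $\phi(D)$ consists entirely of segments already crossed in $\phi(H)$: the edge $uw_0$ is crossed at $\alpha_1$, and the two half-edges at $\alpha_0$ are crossed by definition. Hence a hypothetical new edge cannot cross this boundary, so it lies either entirely inside the region---ruled out by inspection of Figure~\ref{insert}(b), an inspection that handles both the clean and the one-crossing cases at once---or entirely outside, where $\phi(H)$ agrees with $H$ and maximality of $H$ yields a contradiction. To salvage your approach you would need to supplement the adjacency checks with a direct verification that no edge can be inserted across any clean edge interior to $\phi(D)$.
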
 

\proof Assume that 
$D$ is $\langle (u, w_0,w_1), (\alpha_0,  \alpha_1)\rangle$, as shown in Figure~\ref{insert} (a). 
Then $\phi(D)$ is shown in 
Figure~\ref{insert} (b).
Clearly, $\Omega(\phi(H))$ has a member 
$
\langle (u, w_1,w_2), (\alpha_1,  \alpha_2)\rangle$
within $\phi(D)$. 
Thus, $\Omega(\phi(H))\ne \emptyset$.

Suppose that $H$ is a maximal $1$-plane graph.
Suppose that $\phi(H)$ is not 
maximal, and a new edge $e$ can be added to $\phi(H)$ to get a new 
$1$-plane graph.
By definition, 
$\phi(H)$ is obtained from $H$ by converting $D$ into $\phi(D)$.
It can be seen from Figure~\ref{insert} (b) that 
$e$ cannot be within 
the finite region of $\phi(H)$ 
bounded by
$uw_0\alpha_0u$, 
or crosses any edge on this boundary. 
Thus, $e$ must be within the infinite region of $\phi(H)$ bounded by
$uw_0\alpha_0u$,
implying that $e$ can be added to $H$ 
to get a larger $1$-plane graph, 
contradicting the assumption of $H$.

Hence the result holds.
\proofend

For any $n\ge 8$, let $H_n$ denote 
the $1$-plane graph $\phi(H_{n-3})$.  Then, we have a 
sequence 
of $1$-plane graphs 
$\{H_n\}_{n\ge 5}$, 
 which have the following property.

\begin{proposition}\label{Hn} 
For any $n\ge 5$, 
$\Omega(H_n)\ne \emptyset$ and 
$H_n$ is a maximal
$1$-plane graph of order $n$ 
and size $\Ceil{\frac {7n}3}-3$.
\end{proposition}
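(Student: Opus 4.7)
\smallskip

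\noindent\textit{Proof proposal.} My plan is to prove Proposition~\ref{Hn} by induction on $n$, with base cases $n\in\{5,6,7\}$ and inductive step powered by Lemma~\ref{le6-1}. For the base cases, I would read off the drawings in Figure~\ref{order5-7} and verify three things: first, that each of $H_5, H_6, H_7$ is a $1$-plane graph (each edge is crossed at most once) and is maximal (no new edge can be added without destroying $1$-planarity or simplicity), which is visible by inspection; second, that each contains a local structure of the form $\langle (u, w_0, w_1), (\alpha_0, \alpha_1)\rangle$, so that $\Omega(H_n)\ne\emptyset$ for $n=5,6,7$ as labelled in the figure; third, that the edge counts match $\Ceil{7n/3}-3$, i.e.\ $9$, $11$, and $14$ edges respectively, which is a direct enumeration.

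For the inductive step, fix $n\ge 8$ and assume the proposition holds for $n-3$. By hypothesis $\Omega(H_{n-3})\ne\emptyset$, so we may pick some $D\in\Omega(H_{n-3})$ and form $H_n := \phi(H_{n-3})$. Lemma~\ref{le6-1} then immediately gives $\Omega(H_n)\ne\emptyset$ and that $H_n$ is maximal $1$-plane. It remains only to track the order and size: the operation $\phi$ inserts exactly $3$ new vertices ($w_2, a, b$) and exactly $7$ new edges ($w_2u, w_2w_1, w_2w_0, au, aw_0, bw_0, bw_1$), by construction. Hence
\[
n(H_n) \;=\; n(H_{n-3}) + 3 \;=\; (n-3) + 3 \;=\; n,
\]
and, using $7(n-3)/3 = 7n/3 - 7$ (so ceilings shift by the integer $7$),
\[
e(H_n) \;=\; e(H_{n-3}) + 7 \;=\; \Ceil{\tfrac{7(n-3)}{3}} - 3 + 7 \;=\; \Ceil{\tfrac{7n}{3}} - 3,
\]
which completes the induction.

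There is essentially no hard step here: the entire argument rests on Lemma~\ref{le6-1} (already established) together with the arithmetic identity $\Ceil{7(n-3)/3}+7 = \Ceil{7n/3}$. The only place that requires care is the base case, where one must confirm by direct inspection of Figure~\ref{order5-7} that $H_5, H_6, H_7$ really are maximal $1$-plane (not just $1$-plane), since this is the foundation on which Lemma~\ref{le6-1} will propagate maximality. I would double-check maximality of $H_5$ most carefully: with only $5$ vertices and $9$ edges, one must rule out the addition of any of the $\binom{5}{2}-9 = 1$ missing pair as a $1$-planar edge, which reduces to checking that the unique non-edge cannot be drawn through the existing drawing without creating a double crossing or a crossing with a pair of incident edges. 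Once the base cases are pinned down, the inductive step is a one-line invocation of Lemma~\ref{le6-1}.
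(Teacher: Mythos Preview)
Your proposal is correct and follows essentially the same route as the paper: induction on $n$ with base cases $n\in\{5,6,7\}$ read off from Figure~\ref{order5-7}, and the inductive step $H_n=\phi(H_{n-3})$ handled by Lemma~\ref{le6-1} together with the arithmetic $\Ceil{7(n-3)/3}-3+7=\Ceil{7n/3}-3$. The only difference is that you spell out the base-case verification (edge counts $9,11,14$ and the maximality check for $H_5$) in slightly more detail than the paper, which simply asserts that the conclusions hold by inspection of the figure.
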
 

\proof We will prove 
this proposition by induction on $n$. 
Clearly, the conclusions hold
for $n=5,6,7$ by Figure~\ref{order5-7}. 
Now assume that $n\ge 8$.
By the assumption, 
$H_n$ is the graph $\phi(H_{n-3})$.

Since $\Omega(H_{n-3})\ne \emptyset$ and 
$H_{n-3}$ is a maximal $1$-plane graph,
Lemma~\ref{le6-1} yields that 
$\Omega(H_{n})\ne \emptyset$ 
and $H_{n}$ is a maximal $1$-plane graph.
Observe that
$$
|E(H_n)|=|E(H_{n-3})|+7
=\Ceil{\frac{7(n-3)}{3}}-3+7
=\Ceil{\frac{7n}{3}}-3.
$$
Hence the conclusions hold 
for $H_n$. The proof is completed. 
\proofend 

\subsection{Proof of Theorem~\ref{main1.3}}

Let $G$ be any maximal 1-plane graph. 
Recall that 
$\hat{G}$ denotes the skeleton of $G$,
i.e., the subgraph obtained from $G$ 
by removing all hermits.   
We call an edge of $\hat{G}$
\emph{ exceptional} if it is not contained in any $K_4$-subgraph  in $\hat{G}$. 

We will apply the 
following important result, 
due to Bar\'{a}t and  T\'{o}th, which 
provides a structural property 
on the faces of 
$\hat G$ which contain an exceptional edge on its boundaries. 

\begin{lemma}[\cite{BT}]
	\label{skelet-3}
Let $\hat{G}$ be the skeleton of a maximal 1-plane graph $G$ with $n(\hat{G})\geq 4$.
If $ab$ is an exceptional 
edge in $\hat{G}$
and $F_1$ and $F_2$ are the 
two faces of 
$\hat{G}$ whose boundaries contain
$ab$, 
then

\vspace{-3 mm}

\begin{itemize}[itemsep=-1mm]
 \item[(i)] there exists a vertex $f$ 
 	in $\hat{G}$ 
 	such that $a,b$ and $f$ are the only vertices on both 
 $\border(F_1)$ and $\border(F_2)$; and 

 \item[(ii)] edges $af$ and $bf$  in 
 $\hat G$ are not exceptional, as shown in Figure~\ref{local-str}.
\end{itemize}
\end{lemma}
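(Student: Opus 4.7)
The plan is to combine local analysis of $\border(F_1),\border(F_2)$ with the maximality of $\hat G$ (Lemma \ref{ske-max}), relying on Lemmas \ref{adj}, \ref{K_4} and \ref{2-con1}(a).

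First I would observe that $ab$ is clean in $\hat G$, since any edge crossing $ab$ would, by Lemma \ref{K_4}, put $ab$ in a $K_4$, contradicting exceptionality. Writing $V_i$ for the set of vertices of $\hat G$ on $\border(F_i)$, Lemma \ref{adj} makes $V_i$ a clique and Lemma \ref{2-con1}(a) gives $|V_i|\le 4$, so $|V_i|=4$ is excluded by exceptionality. To rule out $|V_i|=2$, I would trace the boundary walk of $F_i$ in the planarisation of $\hat G$: after using the clean edge $ab$ to go from $a$ to $b$, the walk visits only crossings until returning to a vertex, and since each edge of $\hat G$ is crossed at most once and crossings in the planarisation have only original-vertex neighbours, the walk must close through a single crossing between an edge at $b$ and an edge at $a$, yielding by Lemma \ref{K_4} a $K_4$ containing $ab$, a contradiction. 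Hence $|V_1|=|V_2|=3$, and we may write $V_i=\{a,b,f_i\}$.

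The heart of (i) is to show $f_1=f_2$. Suppose $f_1\ne f_2$. By Lemma \ref{adj}, both $f_j$ are adjacent to $a$ and $b$, so if $f_1f_2\in E(\hat G)$ then $\{a,b,f_1,f_2\}$ induces a $K_4$ containing $ab$, contradiction; hence $f_1f_2\notin E(\hat G)$. Removing the clean edge $ab$ from $\hat G$ merges $F_1$ and $F_2$ into a single face of $\hat G-ab$ containing both $f_1$ and $f_2$ on its boundary; inside this merged face there is a simple arc $\gamma$ from $f_1$ (starting at its corner on $F_1$) to $f_2$ (ending at its corner on $F_2$), crossing no edge of $\hat G-ab$. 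Reinstating $ab$, the arc $\gamma$ crosses $ab$ exactly once and no other edge of $\hat G$. Hence $f_1f_2$ can be added to $\hat G$ as a new edge, producing a strictly larger simple $1$-plane graph and contradicting the maximality of $\hat G$. So $f_1=f_2=:f$, giving (i). The main obstacle lies in this maximality argument, which requires a careful topological verification that $\gamma$ introduces no unwanted crossing, leaning on the cleanness of $ab$ and the emptiness of the open faces $F_1,F_2$.

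For (ii), I would examine the cyclic order at $a$ (the argument at $b$ being symmetric). The corners of $F_1,F_2$ at $a$ are bordered by $ab$ and two distinct edges $e_1,e_2$ at $a$ (distinct because $d_{\hat G}(a)\ge 3$), and at most one of them can equal $af$; say $e_1=aw\ne af$. If $e_1$ were clean then $w\in V_1=\{a,b,f\}$ would force $w=f$ (since $w=a$ gives a loop and $w=b$ makes $e_1=ab$), contradicting $e_1\ne af$. So $e_1$ crosses some edge $xy$, and $\border(F_1)$ continues through this crossing to a vertex in $\{b,f\}$. Reaching $b$ would mean $e_1$ crosses an edge incident with $b$, giving by Lemma \ref{K_4} a $K_4$ containing $ab$, contradicting exceptionality. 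So $e_1$ crosses an edge incident with $f$, and Lemma \ref{K_4} yields a $K_4$ containing $af$. Therefore $af$ is not exceptional, and the symmetric argument at $b$ gives the same for $bf$.
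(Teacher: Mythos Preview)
The paper does not actually prove this lemma: it is quoted from \cite{BT} and only a figure of the local configuration is given. So there is nothing to compare your argument against in this paper.

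That said, your proposal is a correct reconstruction and uses exactly the ingredients the paper makes available (Lemmas~\ref{adj}, \ref{K_4}, \ref{2-con1}, \ref{ske-max}). A couple of places where you should tighten the write-up before you'd call it a proof: in the $|V_i|=2$ step you say the walk ``closes through a single crossing''---what you really use is just the first step after $b$, namely that a clean continuation would force a multi-edge while a crossed continuation forces (via Lemma~\ref{K_4}) a $K_4$ on $ab$; it would also help to remark that the planarisation of the $2$-connected graph $\hat G$ is again $2$-connected, so each $F_i$ has a single boundary cycle. In part (ii), when you say the boundary after the crossing lands in $\{b,f\}$, you are silently excluding $a$ by the no-adjacent-crossing convention; make that explicit. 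Finally, in the $f_1\neq f_2$ contradiction, it is worth stating that the corners of $f_1$ on $\border(F_1)$ and $f_2$ on $\border(F_2)$ lie on opposite sides of $ab$, which is why the arc $\gamma$ can be chosen to meet $ab$ exactly once. With those clarifications the argument is complete.
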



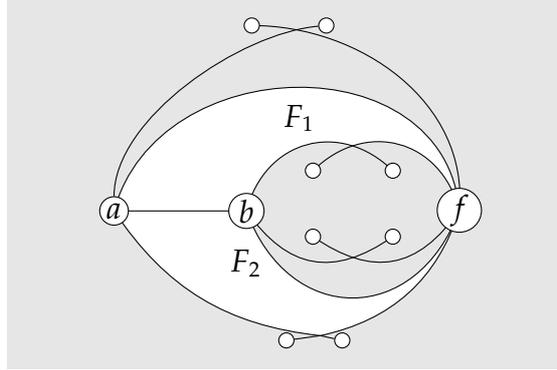
\begin{figure}[!h]
	\centering
	\begin{tikzpicture}[scale=0.35,bezier bounding box,
		whitenode/.style={circle, draw=black, fill=white, minimum size=2mm, inner sep=1pt},bluenode/.style={circle, draw=black, fill=white, minimum size=2mm, inner sep=1pt},shadow_Lightgrey2/.style={draw=none, fill=gray!20},shadow_white/.style={draw=none, fill=white}]
		\tikzset{blackedge/.style={}}
		\node [style=whitenode] (b) at (-4, -0.025) {$b$};
		\node [style=whitenode] (a) at (-8.975, -0.025) {$a$};
		\node [style=whitenode] (x1) at (-1.5, 1.5) {};
		\node [style=whitenode] (x2) at (1.5, 1.5) {};
		\node [style=whitenode] (y1) at (-1.5, -1) {};
		\node [style=whitenode] (y2) at (1.5, -1) {};
		\node [style=whitenode] (z1) at (-3.8, 7) {};
		\node [style=whitenode] (z2) at (-1, 7) {};
		\node [style=whitenode] (f) at (4.0, 0) {$f$};
		\node [style=whitenode] (w1) at (-2.5, -4.93) {};
		\node [style=whitenode] (w2) at (-0.4, -4.93) {};
		\node [style=none] (F2) at (-4, -2) {$F_2$};
		\node [style=none] (F1) at (-2, 3.5) {$F_1$};
		\node [style=none] (w3) at (-1.225, -4.75) {};
		
		\scoped[on background layer]{\fill[gray!20] (-13, -6) rectangle (8, 8);}  
		
		\scoped[on background layer]{\draw [style={shadow_white}] (f.center)
			to [in=75, out=95, looseness=1.25] (a.center)
			to [bend right=25] (w3.center)
			to [bend right] cycle;
			
			\draw [style={shadow_Lightgrey2}] (x1.center)
			to [bend left=60, looseness=1.25] (f.center)
			to [bend left=45, looseness=1.25] (y1.center)
			to (y2.center)
			to [bend right=315, looseness=1.25] (b.center)
			to [bend left=60, looseness=1.25] (x2.center)
			to cycle;	
			
			\draw [style={shadow_Lightgrey2}] (x1.center)
			to (x2.center)
			to [bend right=60, looseness=1.25] (b.center)
			to [in=250, out=-70, looseness=1.5] (f.center)
			to [in=45, out=105, looseness=1.25] cycle;
			\draw [style=blackedge,bend right=25] (a.center) to (w3.center);
			\draw [style=blackedge,in=250, out=-70, looseness=1.5] (b.center) to (f.center);}
		
		\draw [style=blackedge](w3.center) to (w2);
		
		\draw[style=blackedge] (w3.center) to (w1);
		\scoped[on background layer]{\draw [style=blackedge] (a.center) to (b.center);
			\draw [blackedge,bend left] (f.center) to (w3.center);
			\draw [blackedge, bend left=60, looseness=1.25] (b.center) to (x2.center);
			\draw [blackedge, bend right=45, looseness=1.25] (b.center) to (y2.center);
			\draw [blackedge, in=90, out=185, looseness=0.75] (z2) to (a);
			\draw [blackedge, in=85, out=0] (z1.center) to (f.center);
			\draw [blackedge, bend left=60, looseness=1.25] (x1.center) to (f.center);
			\draw [blackedge, bend left=45, looseness=1.25] (f.center) to (y1.center);
			\draw [blackedge, in=95, out=75, looseness=1.25] (a.center) to (f.center);}
		
	\end{tikzpicture}
	
	\caption{A local structural description of $G$ with an exceptional edge $ab$ }
	\label{local-str}
\end{figure}

We now proceed to prove our main result.

\vspace{0.2 cm} 

\noindent
{\bf Proof of Theorem~\ref{main1.3}}.  
By Proposition~\ref{Hn}, 
 it suffices to prove the following inequality  
\equ{main-e1}
{
e(G)\geq \frac{7}{3}n(G)-3,
}
for all maximal $1$-plane graphs $G$ with $n(G)\ge 5$. 

We will prove (\ref{main-e1}) by contradiction. 
Suppose that (\ref{main-e1}) fails, and $G$ is a maximal $1$-plane graph
with the minimum order such that it
is a 
counter-example to (\ref{main-e1}). 

\setcounter{claim}{0}

\begin{claim}\label{main-cl0} 
	$n(G)\ge 6$. 
\end{claim}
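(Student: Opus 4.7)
The plan is to derive a contradiction from $n(G) = 5$. For this $n$, a counter-example would require $e(G) < \frac{7 \cdot 5}{3} - 3 = \frac{26}{3}$, i.e., $e(G) \leq 8$; I will show that no maximal $1$-plane graph on $5$ vertices has so few edges.

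First I would eliminate the possibility that $G$ has a hermit. Suppose $G$ has one; by Lemma~\ref{ske-max}, the skeleton $\hat{G}$ is maximal $1$-plane with $\delta(\hat{G}) \geq 3$, so $n(\hat{G}) \geq 4$, and combined with $n(\hat{G}) \leq n(G) - 1 = 4$ this forces $n(\hat{G}) = 4$. Since $m(4) = \binom{4}{2} = 6$ from the introduction, $\hat{G} \cong K_4$. By Lemma~\ref{2-con1}(b), the clean-edge subgraph of $\hat{G}$ is spanning and $2$-connected, hence has at least $n(\hat{G}) = 4$ edges. Because each crossing deletes both participating edges from the clean-edge subgraph, the number $k$ of crossings in $\hat{G}$ satisfies $6 - 2k \geq 4$, i.e., $k \leq 1$; but a nest demands two distinct crossings, so $\hat{G}$ has no nest, contradicting Lemma~\ref{h-nest}, which places every hermit inside some nest of $\hat{G}$. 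Therefore $G$ has no hermit, so $\hat{G} = G$ and $\delta(G) \geq 3$.

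Next I would determine $G$ via degree counting: with $n(G) = 5$, $\delta(G) \geq 3$, $\Delta(G) \leq 4$, and $2 e(G) \leq 16$, the only degree sequence consistent with these constraints and with the parity of $2e(G)$ is $(4,3,3,3,3)$. Thus $e(G) = 8$, $G$ has a dominating vertex $v$, and the remaining four vertices induce a $2$-regular simple graph, which must be the $4$-cycle $C_4 = u_1 u_2 u_3 u_4$. Hence $G \cong W_4$, the wheel on five vertices.

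The final and most delicate step is to show that $W_4$ cannot be maximal $1$-plane. By Lemma~\ref{K_4}, any crossing in a $1$-plane drawing of $W_4$ would force the four endpoints of the crossing edges to span a $K_4$. The non-adjacent edge pairs in $W_4$ come in two kinds: (i) a spoke $vu_i$ together with a rim edge $u_j u_{j+1}$ (indices mod $4$) with $i \notin \{j, j+1\}$, and (ii) two opposite rim edges. In (i), the induced $K_4$ would require both $u_i u_j$ and $u_i u_{j+1}$ as rim edges, but since $i$ has cyclic distance $2$ from either $j$ or $j+1$, one of these pairs is a rim diagonal absent from $W_4$. In (ii), the induced $K_4$ would require both diagonals $u_1 u_3$ and $u_2 u_4$, again absent. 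So every $1$-plane drawing of $W_4$ is planar. But any planar drawing of $W_4$ has a face bounded by the rim $C_4$, on whose boundary the non-adjacent pair $u_1, u_3$ appears, violating Lemma~\ref{adj}. The main obstacle is precisely this final step, because both Lemma~\ref{K_4} and Lemma~\ref{adj} are needed, respectively, to kill the crossing and planar drawings of $W_4$.
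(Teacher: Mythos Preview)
Your proof is correct and more self-contained than the paper's. The paper simply asserts that ``it is routine to verify'' that there are exactly two maximal $1$-plane graphs on five vertices (both displayed in a figure) and that each has at least nine edges. You instead argue forward from the lemmas already established: ruling out hermits via Lemma~\ref{ske-max} and Lemma~\ref{h-nest} (a $K_4$ drawing has at most one crossing, hence no nest), forcing the degree sequence $(4,3,3,3,3)$ and thus $G\cong W_4$, and then eliminating $W_4$ via Lemma~\ref{K_4} (no crossings possible) and Lemma~\ref{adj} (a large face survives). This buys you a proof that does not rely on an unexplained enumeration or a figure.

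One small point: the assertion that ``any planar drawing of $W_4$ has a face bounded by the rim $C_4$'' is true (since $W_4$ is $3$-connected, its planar embedding is unique), but you do not say why. A slicker finish avoids this: by Euler's formula $W_4$ has five faces with total boundary length $16$, so some face has length at least $4$; every cycle of length $\ge 4$ in $W_4$ contains a pair $u_i,u_{i+2}$ of non-adjacent rim vertices, which already contradicts Lemma~\ref{adj}. Either way the argument goes through.
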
 

\begin{proof}
	Suppose that $n(G)=5$. 
	The assumption of $G$ yields that 
	$e(G)< \frac 73 \times 5-3= \frac {26}3$,
	implying that $e(G)\le 8$. 
However, it is routine to verify that 
	there are only two possible 
	 maximal $1$-plane graphs 
	 of order $5$,
	as shown in Figure~\ref{K4}.
	Thus, $e(G)\ge 9$, a contradiction.  

	Claim~\ref{main-cl0} holds.
\end{proof}

\begin{figure}[!h]
\centering
\tikzset{
			whitenode/.style={circle, draw=black, fill=white, minimum size=0.5em, inner sep=1pt},
			bluenode/.style={circle, draw=blue, fill=blue!20, minimum size=5mm},
			blackedge/.style={draw=black, line width=1pt},
			blueedge/.style={draw=blue},
			blueedge1/.style={draw=blue, line width=1.5pt} %
		}
\begin{tikzpicture}[bezier bounding box,scale=0.7]
	\begin{pgfonlayer}{nodelayer}
		\node [style=whitenode] (0) at (-6, 3) {};
		\node [style=whitenode] (1) at (-3, 3) {};
		\node [style=whitenode] (2) at (-6, 0) {};
		\node [style=whitenode] (3) at (-3, 0) {};
		\node [style=whitenode] (4) at (-4.5, 4.5) {};
		\node [style=none] (5) at (-4.5, -1) {(a)};
		\node [style=whitenode] (6) at (3, 3) {};
		\node [style=whitenode] (7) at (6, 3) {};
		\node [style=whitenode] (8) at (3, 0) {};
		\node [style=whitenode] (9) at (6, 0) {};
		\node [style=whitenode] (10) at (4.5, 2.5) {};
		\node [style=none] (11) at (4.5, -1) {(b)};
	\end{pgfonlayer}
	\begin{pgfonlayer}{edgelayer}
		\draw [blackedge] (0) to (1);
		\draw [blackedge] (1) to (3);
		\draw [blackedge] (3) to (2);
		\draw [blackedge] (2) to (0);
		\draw [blackedge] (0) to (3);
		\draw [blackedge] (1) to (2);
		\draw [blackedge] (4) to (0);
		\draw [blackedge] (4) to (1);
		\draw [blackedge, bend right=60, looseness=1.25] (4) to (2);
		\draw [blackedge, in=45, out=-15, looseness=1.25] (4) to (3);
		\draw [blackedge] (6) to (7);
		\draw [blackedge] (7) to (9);
		\draw [blackedge] (9) to (8);
		\draw [blackedge] (8) to (6);
		\draw [blackedge] (6) to (9);
		\draw [blackedge] (7) to (8);
		\draw [blackedge] (10) to (6);
		\draw [blackedge] (10) to (7);
		\draw [blackedge, in=105, out=135, looseness=2.75] (8) to (10);
	\end{pgfonlayer}
\end{tikzpicture}



\caption{The only maximal $1$-palne graphs of order $5$}

\label{K4} 
\end{figure}
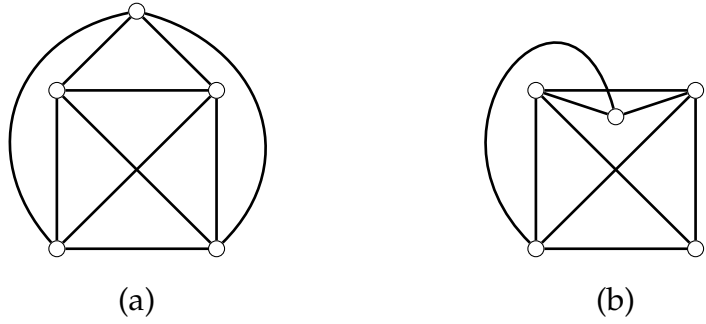 

Now we are going to establish the following claims.
 
\begin{claim}\label{main-cl2} 
	$\hat G$ contains exceptional edges. 
\end{claim}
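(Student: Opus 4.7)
The plan is to argue by contradiction: suppose every edge of $\hat G$ lies in some $K_4$-subgraph of $\hat G$. Since $\hat G$ is itself a maximal $1$-plane graph by Lemma~\ref{ske-max}, it then satisfies all the hypotheses of Proposition~\ref{skeleton}, so one should be able to apply that density bound to $\hat G$ and then transfer it back to $G$ by accounting for the hermits removed in passing to the skeleton.

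The bookkeeping goes as follows. Let $k$ be the number of hermits of $G$. By Lemma~\ref{hermit1}, removing a hermit deletes exactly two (clean) edges, so
\[
n(\hat G)=n(G)-k, \qquad e(\hat G)=e(G)-2k.
\]
By Lemma~\ref{h-nest}, the hermits of $G$ lie in pairwise distinct nests of $\hat G$, so $k\le \nest(\hat G)$. Assuming $n(\hat G)\ge 5$, Proposition~\ref{skeleton} gives $e(\hat G)\ge \frac{7}{3}n(\hat G)+\frac{1}{3}\nest(\hat G)-3$, and chaining these yields
\[
e(G)\ge \frac{7}{3}(n(G)-k)+\frac{1}{3}\nest(\hat G)-3+2k = \frac{7}{3}n(G)+\frac{1}{3}\bigl(\nest(\hat G)-k\bigr)-3 \ge \frac{7}{3}n(G)-3,
\]
which contradicts the choice of $G$ as a counter-example to (\ref{main-e1}).

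It then remains to dispose of the degenerate case $n(\hat G)=4$. By Lemma~\ref{ske-max}, $\hat G$ has minimum degree at least $3$, so $n(\hat G)=4$ forces $\hat G\cong K_4$. A direct inspection of the possible $1$-planar drawings of $K_4$ (either planar, or with exactly one crossing between the two diagonals of a $4$-cycle) shows that $\hat G$ has no two distinct crossings, and hence no nest. By Lemma~\ref{h-nest}, this gives $k=0$, so $G=\hat G$ has order $4$, contradicting Claim~\ref{main-cl0}.

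The only real subtlety is ensuring that the tradeoff between hermits and nests is tight: the slack $\frac{1}{3}(\nest(\hat G)-k)$ in the main inequality comes precisely from the nest contribution in Proposition~\ref{skeleton} absorbing the shortfall of $\frac{1}{3}k$ produced by each hermit (which supplies $2$ edges while demanding $\frac{7}{3}$). Without the inequality $k\le \nest(\hat G)$ supplied by Lemma~\ref{h-nest}, the induction would fail; beyond this, the argument is a short assembly of results from earlier sections.
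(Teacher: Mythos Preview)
Your proof is correct and follows essentially the same approach as the paper's: argue by contradiction, use Lemma~\ref{h-nest} to bound the number of hermits by $\nest(\hat G)$, apply Proposition~\ref{skeleton} to $\hat G$ when $n(\hat G)\ge 5$, and dispose of the case $n(\hat G)=4$ by noting that $K_4$ has no nests so $G=\hat G$, contradicting Claim~\ref{main-cl0}. The paper's treatment of the degenerate case is slightly terser (it asserts $\nest(\hat G)=0$ directly for $n(\hat G)\le 4$ without spelling out the drawing inspection), but the logic is identical.
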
 

\begin{proof}
Suppose that $\hat G$ does not contain exceptional edges.
Assume that $G$ has $h$ hermits.
By Lemma~\ref{h-nest}, 
$\nest(\hat{G})\geq h$.
By Lemma~\ref{ske-max}, 
$\hat G$ is a maximal $1$-plane graph. 
If $n(\hat G)\le 4$, then 
$\nest(\hat{G})=0$, implying that $h=0$
and $n(G)=n(\hat G)=4$, a contradiction
to Claim~\ref{main-cl0}. 
Hence $n(\hat G)\ge 5$.

Since $\hat G$ has no exceptional edges,
each edge in $\hat G$ must 
be contained in some 
$K_4$-subgraph. 
Applying Proposition \ref{skeleton} to $\hat G$ implies that 
$e(\hat G)\ge \frac 73n(\hat G)+\frac 13\nest(\hat G)-3$. 
Since 
$n(G)=n(\hat{G})+h$ and $e(G)=e(\hat{G})+2h$, 
we have 
\equ{e6-4}
{
e(G)
\ge \frac{7}{3}(n(G)-h)+
\frac{1}{3}h-{3}+ 2h 
=\frac{7}{3}n(G)-{3},
}
a contradiction to the assumption 
of $G$.
Claim~\ref{main-cl2} holds. 
\end{proof} 

\begin{claim}\label{main-cl3} 
$G$ is not a counter-example to
	(\ref{main-e1}).
\end{claim}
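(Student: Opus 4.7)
The plan is to exploit the exceptional edge in $\hat{G}$ provided by Claim \ref{main-cl2} to split $G$ into two strictly smaller maximal $1$-plane graphs, apply the minimality of $G$ to each piece, and combine the two resulting edge bounds to contradict the assumption that $G$ violates (\ref{main-e1}).

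First I would pick an exceptional edge $ab\in E(\hat{G})$ and invoke Lemma \ref{skelet-3} to obtain the vertex $f$ such that $\{a,b,f\}$ induces a triangle in $\hat{G}$, both $af$ and $bf$ are non-exceptional, and $\{a,b,f\}$ are the only common boundary vertices of the two faces of $\hat{G}$ containing $ab$. As indicated in the footnote preceding the SWM*-rule in Section \ref{sec3}, this local structure implies that $\{a,b,f\}$ is a $3$-vertex separator of $\hat{G}$, so $\hat{G}$ decomposes into two maximal $1$-plane subgraphs $\hat{H}_1$ and $\hat{H}_2$ with $V(\hat{H}_1)\cap V(\hat{H}_2)=\{a,b,f\}$ and $E(\hat{H}_1)\cap E(\hat{H}_2)=\{ab,af,bf\}$. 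A key intermediate point is that neither $\hat{H}_i$ equals $K_3$: if $\hat{H}_1\cong K_3$, the triangle $abf$ would bound a face of $\hat{G}$, so by Lemma \ref{2-con1} the other face $F_2$ containing $ab$ would have at most four boundary vertices; a fourth vertex on $\border(F_2)$ would be adjacent to all of $a,b,f$ by Lemma \ref{adj}, producing a $K_4$ containing $ab$ and contradicting the exceptionality of $ab$, while if $\border(F_2)$ contained no fourth vertex then $F_2$ would also be the triangle $abf$, forcing $\hat{G}\cong K_3$ and contradicting Claim \ref{main-cl0}.

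Next I would reinsert each hermit of $G$ into the piece whose portion of the plane contains its enclosing nest (possible by Lemma \ref{h-nest}), obtaining maximal $1$-plane graphs $H_1,H_2$ satisfying
\[
n(H_1)+n(H_2)=n(G)+3 \quad\text{and}\quad e(H_1)+e(H_2)=e(G)+3,
\]
with $4\le n(H_i)<n(G)$ for $i=1,2$. Since $n(G)\ge 6$ by Claim \ref{main-cl0}, at most one piece satisfies $n(H_i)=4$. If both $n(H_i)\ge 5$, the minimality of $G$ applied to each piece yields
\[
e(G)=e(H_1)+e(H_2)-3\ge \tfrac{7}{3}(n(G)+3)-6-3=\tfrac{7}{3}n(G)-2,
\]
contradicting the assumption. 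Otherwise, say $n(H_1)=4$, so $H_1\cong K_4$ with $e(H_1)=6$, and $n(H_2)=n(G)-1\ge 5$, giving
\[
e(G)\ge 6+\tfrac{7}{3}(n(G)-1)-3-3=\tfrac{7}{3}n(G)-\tfrac{7}{3},
\]
which is again a contradiction. Hence $G$ cannot be a counter-example to (\ref{main-e1}), and Theorem \ref{main1.3} follows.

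The main obstacle I foresee is formalising the decomposition cleanly: verifying that the split of $\hat{G}$ along the triangle $\{a,b,f\}$ really yields two maximal (not merely arbitrary) $1$-plane subgraphs and that both pieces are strictly smaller than $\hat{G}$. The argument above rules out the degenerate case $\hat{H}_i\cong K_3$ via Lemmas \ref{skelet-3}, \ref{2-con1} and \ref{adj} together with the exceptionality of $ab$; the maximality of each $H_i$ after hermit reinsertion follows from Lemmas \ref{hermit1} and \ref{h-nest}, since hermits sit inside nests and cannot obstruct any potential new edge.
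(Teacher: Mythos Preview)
Your overall strategy mirrors the paper's---decompose using the exceptional edge and Lemma~\ref{skelet-3}, then apply minimality to the pieces---but you split along the triangle $\{a,b,f\}$ whereas the paper splits at the cut-vertex $f$ of $\hat G-ab$, obtaining pieces $G_1,G_2$ that share only $f$ (so $n(G)=n(G_1)+n(G_2)-1$ and $e(G)=e(G_1)+e(G_2)+1$). Your arithmetic is actually a bit nicer: from $n(H_1)+n(H_2)=n(G)+3\ge 9$ you immediately rule out both pieces having order~$4$, whereas the paper must separately dispose of the configuration $n(G_1)=n(G_2)=4$ by exhibiting the explicit non-maximal graph of Figure~\ref{H1-2-4}.

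There are, however, two genuine gaps in your write-up. First, your justification that $\hat H_i\not\cong K_3$ is faulty in its last branch: ``$\border(F_2)$ has no fourth vertex'' is already guaranteed by Lemma~\ref{skelet-3}, and it does \emph{not} force $F_2$ to be the triangle $abf$ (the boundary may contain crossings), nor does it force $\hat G\cong K_3$ (there may still be vertices in $B$). The clean argument is simply that $A=\emptyset$ would give $N_{\hat G}(a)=\{b,f\}$, contradicting $\delta(\hat G)\ge 3$ from Lemma~\ref{ske-max}.

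Second, and more seriously, the maximality of your pieces $\hat H_i$ is asserted but not established; the footnote you cite refers to the cut-vertex decomposition, not yours. In your $\hat H_1=\hat G[\{a,b,f\}\cup A]$ the vertex $b$ has degree~$2$, and you must show no edge $bv$ with $v\in A$ can be added. This \emph{can} be done: the face $\mathcal F$ of the paper's $H_1$ containing $b$ has only $a,f$ as vertices, so the only possible clean edge on $\border(\mathcal F)$ is $af$ itself, and if $af$ were clean with a vertex $v\neq a,f$ on the opposite face then $bv$ could already be added to $\hat G$ across $af$, contradicting the maximality of $\hat G$. But this argument is missing from your proposal, and without it the induction step does not go through. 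The paper sidesteps the issue entirely because its pieces are precisely the blocks of $\hat G-ab$, whose maximality follows directly from the structure in Lemma~\ref{skelet-3}.
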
 

\begin{proof}
Assume that $\hat G$ has $k$ exceptional edges. Then 
$k\ge 1$ by Claim~\ref{main-cl2}. 
Let $ab$ be an exceptional  edge in 
$\hat G$.
By Lemma~\ref{skelet-3}, 
$\hat G-ab$ has 
a cut-vertex $f$ and 
two edge-disjoint subgraphs
$H_1$ and $H_2$
such that 
$V(H_1)\cup V(H_2)=V(\hat G)$, 
$V(H_1)\cap V(H_2)=\{f\}$ and 
$E(H_1)\cup E(H_2)=E(\hat G)\setminus \{ab\}$, as shown in Figure~\ref{local-str}.
Lemma~\ref{skelet-3} yields that 
both $H_1$ and $H_2$ are maximal $1$-plane graphs, and 
$\Nest(\hat G)=\Nest(H_1)\cup \Nest(H_2)$.

By Lemma~\ref{2-con1} (b), $\hat G$ is $2$-connected,
implying that 
$\{a,b\}\not\subseteq V(H_i)$ 
for each $i\in [2]$. 
Assume that 
$a\in V(H_1)$ and $b\in V(H_2)$.
Since the boundary of one face in $H_1$ contains vertices 
$a$ and $f$ only,
some edge in $\partial_{H_1}(a)$
crosses  some edge in $\partial_{H_1}(f)$, 
implying that $n(H_1)\ge 4$.
Similarly, $n(H_2)\ge 4$.

Note that $G$ is a graph obtained from
$\hat G$ by putting back all hermits.
By Lemma~\ref{hermit1}, 
each hermit $v$ of $G$ 
appears within a nest $\setn$ 
in $\Nest(\hat G)$.
As $\Nest(\hat G)$ is the disjoint union of $\Nest(H_1)$ and $\Nest(H_2)$,
a hermit $v$ should be put back to $H_1$ if $\setn\in \Nest(H_1)$ and  to $H_2$ otherwise. 
For $i=1,2$, 
let $G_i$ 
denote the maximal $1$-plane graph obtained from $H_i$
by putting all hermits $v$ of $G$ 
that are within  nests of $H_i$.

Assume that $n(G_1)\le n(G_2)$.
We now claim that $n(G_2)\ge 5$.
Suppose that  $n(G_2)=4$.
Then,  $n(H_1)=n(H_2)=4$ and 
$\hat G$ is as shown 
in Figure~\ref{H1-2-4} (a),
where edge $ab$ is non-removable. 
Clearly, the two crossings 
in $\hat G$ cannot produce any 
nest, implying that 
$h=0$ and $G$ is $\hat G$.
But, the graph in Figure~\ref{H1-2-4} (a) 
is not a maximal $1$-plane graph,
as edge $bu$ can be added in to 
get a larger $1$-plane graph $\hat  G+bu$,
as shown in Figure~\ref{H1-2-4} (b),
a contradiction.
Hence $n(G_2)\ge 5$. 

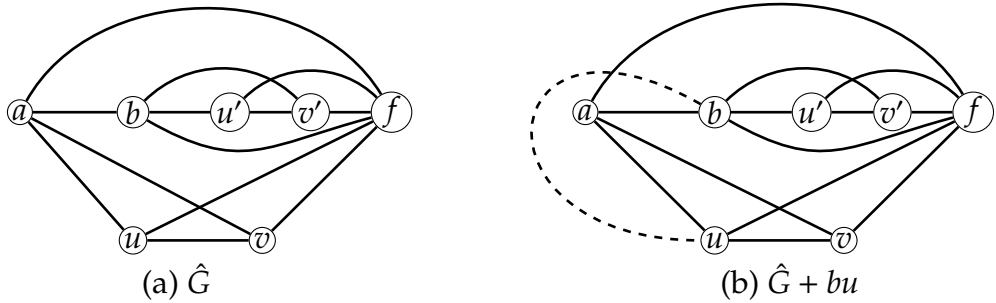
\begin{figure}[!h]
	\centering
	\begin{tikzpicture}[bezier bounding box,scale=0.85]
\tikzset{
			whitenode/.style={circle, draw=black, fill=white, minimum size=0.2em,inner sep=0.5pt},
			bluenode/.style={circle, draw=blue, fill=blue!20, minimum size=5mm},
			blackedge/.style={draw=black, line width=1pt},
		}

	\begin{pgfonlayer}{nodelayer}
		\node [style=whitenode] (0) at (-6.75, 2) {$a$};
		\node [style=whitenode] (1) at (-5, 2) {$b$};
		\node [style=whitenode] (2) at (-3.5, 2) {$u'$};
		\node [style=whitenode] (3) at (-2.25, 2) {$v'$};
		\node [style=whitenode] (4) at (-1, 2) {$f$};
		\node [style=whitenode] (5) at (-5, 0) {$u$};
		\node [style=whitenode] (6) at (-3, 0) {$v$};
		\node [style=whitenode] (7) at (2, 2) {$a$};
		\node [style=whitenode] (8) at (4, 2) {$b$};
		\node [style=whitenode] (9) at (5.5, 2) {$u'$};
		\node [style=whitenode] (10) at (6.75, 2) {$v'$};
		\node [style=whitenode] (11) at (8, 2) {$f$};
		\node [style=whitenode] (12) at (4, 0) {$u$};
		\node [style=whitenode] (13) at (6, 0) {$v$};
	\end{pgfonlayer}
	\begin{pgfonlayer}{edgelayer}
		\draw [blackedge] (0) to (1);
		\draw [blackedge] (1) to (2);
		\draw [blackedge] (2) to (3);
		\draw [blackedge] (3) to (4);
		\draw [blackedge] (0) to (5);
		\draw [blackedge] (0) to (6);
		\draw [blackedge] (4) to (5);
		\draw [blackedge] (4) to (6);
		\draw [blackedge] (6) to (5);
		\draw [blackedge] (7) to (12);
		\draw [blackedge] (7) to (13);
		\draw [blackedge] (13) to (12);
		\draw [blackedge] (7) to (8);
		\draw [blackedge] (8) to (9);
		\draw [blackedge] (9) to (10);
		\draw [blackedge] (10) to (11);
		\draw [blackedge] (13) to (11);
		\draw [blackedge] (11) to (12);
		\draw [blackedge, in=135, out=45] (8) to (10);
		\draw [blackedge, in=135, out=45] (9) to (11);
		\draw [blackedge, in=-165, out=-30, looseness=1.25] (8) to (11);
		\draw [blackedge, in=135, out=45] (2) to (4);
		\draw [blackedge, in=135, out=45] (1) to (3);
		\draw [blackedge, bend left=60] (0) to (4);
		\draw [blackedge, dashed, in=180, out=150, looseness=4.50] (8) to (12);
		\draw [style=blackedge, bend left=60] (7) to (11);
		\draw [style=blackedge, in=-165, out=-30, looseness=1.25] (1) to (4);
	\end{pgfonlayer}
\end{tikzpicture}

	(a) $\hat G$ \hspace{6.5 cm}
	(b) $ \hat G+bu$
	
	\caption{$H_1$ and $H_2$ are the two blocks of $\hat{G}-a b$ with $n\left(H_1\right)=n\left(H_2\right)=4$} 
	
	\label{H1-2-4} 
\end{figure}

By the assumption of $G$,
for each $i\in \brk{1,2}$,
as $4\le n(G_i)<e(G)$, 
$e(G_i)\ge \frac 73 n(G_i)-3$ holds 
whenever $n(G_i)\ge 5$.
As $n(G_2)\ge 5$, this inequality holds 
for $i=2$.
If $n(G_1)=4$, then 
$e(G_1)=\frac 73 n(G_1)-\frac{10}3$,
implying that 
$e(G_1)\ge \frac 73 n(G_1)-\frac{10}3$.
Since $e(G)=e(G_1)+e(G_2)+1$
and $n(G)=n(G_1)+n(G_2)-1$, 
we have 
\equ{main-e2}
{
	e(G)\ge  
\frac 73 n(G_1)-\frac{10}3
+ \frac 73 n(G_2)-3+1
=\frac 73 (n(G)+1)-\frac{16}{3}
=\frac 73 n(G)-3.
}
Hence Claim~\ref{main-cl3} holds.
\end{proof}

Claim~\ref{main-cl3} 
contradicts the assumption of $G$.
It follows that  
(\ref{main-e1}) holds for all maximal $1$-plane graphs 
$G$ with $n(G)\ge 5$. 
Theorem~\ref{main1.3} is proven.
\hfill{$\Box$}

\vskip 0.4cm

\end{document}